\def\bfB{\mathbf{B}}
\newcommand{\PA}{(\mathrm{A})}
\newcommand{\PAplus}{(\mathrm{A^+})}
\newcommand{\PM}{(\mathrm{M})}
\newcommand{\Mat}{\operatorname{M}}
\newcommand{\id}{\operatorname{id}}
\newcommand{\Ker}{\operatorname{Ker}}
\newcommand{\End}{\operatorname{End}}
\newcommand{\Vect}{\operatorname{span}}
\newcommand{\im}{\operatorname{Im}}
\newcommand{\tr}{\operatorname{tr}}
\newcommand{\rk}{\operatorname{rk}}
\renewcommand{\setminus}{\smallsetminus}
\def\F{\mathbb{F}}
\def\C{\mathbb{C}}
\def\N{\mathbb{N}}
\def\lcro{\mathopen{[\![}}
\def\rcro{\mathclose{]\!]}}
\theoremstyle{definition}
\newtheorem{Def}{Definition}
\theoremstyle{plain}
\newtheorem{theo}{Theorem}
\newtheorem{prop}[theo]{Proposition}
\newtheorem{cor}[theo]{Corollary}
\newtheorem{lemma}[theo]{Lemma}
\theoremstyle{plain}
\theoremstyle{remark}
\newtheorem{Rems}{Remarks}
\newtheorem{Rem}[Rems]{Remark}
\newtheorem{ex}[Rems]{Example}
\title{Sums of three quadratic endomorphisms of an infinite-dimensional vector space}
\author{Cl\'ement de Seguins Pazzis\footnote{Universit\'e de Versailles Saint-Quentin-en-Yvelines, Laboratoire de Math\'ematiques
de Versailles, 45 avenue des Etats-Unis, 78035 Versailles cedex, France}
\footnote{e-mail address: dsp.prof@gmail.com}}
\begin{document}

\thispagestyle{plain}

\maketitle

\begin{abstract}
Let $V$ be an infinite-dimensional vector space over a field.
In a previous article \cite{dSPSum4}, we have shown that every endomorphism of $V$ splits into the sum of four square-zero
ones but also into the sum of four idempotent ones. Here, we study decompositions into sums of three endomorphisms
with prescribed split annihilating polynomials with degree $2$.
Except for endomorphisms that are the sum of a scalar multiple of the identity and of a finite-rank endomorphism,
we achieve a simple characterization of such sums. In particular, we give a simple characterization
of the endomorphisms that split into the sum of three square-zero ones, and we prove that every endomorphism of $V$
is a linear combination of three idempotents.
\end{abstract}

\vskip 2mm
\noindent
\emph{AMS Classification:} 15A24; 16B50

\vskip 2mm
\noindent
\emph{Keywords:} Infinite-dimensional vector space, Endomorphism, Decomposition, Square-zero endomorphism, Idempotent.

\section{Introduction}

Throughout the article, $\F$ denotes an arbitrary field and $V$ is an infinite-dimensional vector space over $\F$, whose
algebra of endomorphisms we denote by $\End(V)$.
An endomorphism $u$ of $V$ is called \textbf{quadratic} whenever there exists a polynomial $p(t) \in \F[t]$
with degree $2$ such that $p(u)=0$. Special cases of quadratic endomorphisms are the square-zero ones,
the idempotent ones, and the involutions.
Let $p_1,\dots,p_n$ be split polynomials with degree $2$ over $\F$.
We call an endomorphism $u$ of $V$ a $(p_1,\dots,p_n)$\textbf{-sum} whenever there exists an $n$-tuple $(u_1,\dots,u_n)$
of endomorphisms of $V$ such that
$$u=\sum_{k=1}^n u_k \quad \text{and} \quad \forall k \in \lcro 1,n\rcro, \; p_k(u_k)=0.$$
We adopt a similar definition for square matrices over $\F$.

Likewise, a scalar $\lambda$ is called a $(p_1,\dots,p_n)$\textbf{-sum}
whenever there exists an $n$-tuple $(x_1,\dots,x_n)\in \F^n$ such that
$$\lambda=\sum_{k=1}^n x_k \quad \text{and} \quad \forall k \in \lcro 1,n\rcro, \; p_k(x_k)=0.$$

In a recent work \cite{dSPSum4}, we have obtained the following general result:

\begin{theo}\label{theo4}
Let $(p_1,p_2,p_3,p_4)$ be a four-tuple of split polynomials with degree $2$ over $\F$.
Then, every endomorphism of an infinite-dimensional vector space over $\F$ is a $(p_1,p_2,p_3,p_4)$-sum.
\end{theo}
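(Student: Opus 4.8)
Write $p_k(t)=(t-a_k)(t-b_k)$ and $c_k:=b_k-a_k$. I would start with two reductions. First, $v\mapsto v-a_k\,\id$ turns a solution of $p_k(v)=0$ into a solution of $q_k(v-a_k\,\id)=0$, where $q_k(t):=t(t-c_k)$; hence $u$ is a $(p_1,p_2,p_3,p_4)$-sum if and only if $u-\bigl(\sum_k a_k\bigr)\id$ is a $(q_1,q_2,q_3,q_4)$-sum, so I may assume that $0$ is a root of each $p_k$ — in which case a $p_k$-endomorphism is square-zero (when $c_k=0$) or $c_k$ times an idempotent (when $c_k\neq0$). Second, being a $(p_1,\dots,p_4)$-sum is invariant under conjugation, and whenever $V=\bigoplus_i V_i$ with all $V_i$ stable under $u$, it suffices that each $u|_{V_i}$ be such a sum (one assembles the four summands block-diagonally). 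These remarks let me split the proof into a ``generic'' regime and the regime where $u$ is, up to the above, a scalar operator plus a finite-rank one.

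The scalar operators are precisely what forces $\dim V$ to be infinite (finitely many $(p_1,\dots,p_n)$-summands of a finite matrix are trace-constrained), and I would handle them by what I will call a shift trick. Since $\dim V$ is infinite, $V\cong\bigoplus_{n\ge0}W$ with $W\cong V$; let $T$ be the forward shift $W_n\xrightarrow{\sim}W_{n+1}$. Then $T$ is a sum of two square-zero operators (split $V$ into the spans of the even- and odd-indexed $W_n$), and $(V,T)$ is a \emph{free} $\F[t]$-module of rank $\dim W=\dim V$. Since $\F[\lambda\,\id-T]=\F[T]$ as subalgebras of $\End(V)$, the module $V$ is still free over $\F[\lambda\,\id-T]$, so $\lambda\,\id_V-T$ is likewise conjugate to a forward shift and hence likewise a sum of two square-zero operators. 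Writing $\lambda\,\id_V=(\lambda\,\id_V-T)+T$ then exhibits $\lambda\,\id_V$ as a sum of four square-zero operators. For the prescribed polynomials I would run the same scheme, splitting the four polynomials into two pairs, once I check that a shift operator is a $(q_i,q_j)$-sum for the relevant pair — a case analysis on the $c_k$, where the required idempotent summands are produced from coarsenings of the $\N$-grading or from regrouping the four polynomials — and that a finite-rank perturbation can be absorbed.

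For the generic regime the plan is to establish a structural lemma of roughly the following shape: if $u-\lambda\,\id$ has infinite rank for every $\lambda\in\F$, then, after conjugation, $V$ is a direct sum of $u$-stable subspaces on each of which $u$ is a forward or backward shift, a scalar operator, or conjugate to a degree-$2$ companion-type operator $\begin{pmatrix}0&\ast\\\id&\ast\end{pmatrix}$ — equivalently, there is a splitting $V=X\oplus Y$ with $\dim X=\dim Y=\dim V$, $u(X)\subseteq Y$, and $u|_X\colon X\to Y$ injective, after which one peels off the square-zero corner $\begin{pmatrix}0&0\\\ast&0\end{pmatrix}$ and reduces the remaining block-upper-triangular part, by a change of basis, to shift and scalar building blocks. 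Granting this, each piece is disposed of as in the second paragraph and the decomposition is reassembled. The residual operators of the form $\mu\,\id_V+f$ with $f$ of finite rank that are not already covered are then treated by combining the shift trick with a careful placement of $f$ so that all four annihilating polynomials are simultaneously respected; this is exactly where four summands rather than three remove the finite-dimensional trace obstruction.

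I expect the hard part to be precisely the structural lemma of the third paragraph together with the scalar-plus-finite-rank boundary case: one must produce, for an essentially arbitrary ``spread-out'' $u$, a basis of $V$ displaying the companion/shift structure with all the bookkeeping on the dimensions of the pieces, and then deal with the delicate case where $u$ differs from a scalar by a nonzero operator of finite rank. By contrast, the reductions, the shift trick for scalars, and the final assembly should be comparatively soft.
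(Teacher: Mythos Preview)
This theorem is not proved in the present paper; it is quoted from \cite{dSPSum4}. The paper does, however, describe the strategy used there (see the paragraph following Theorem~\ref{theo2}): first, every \emph{elementary} endomorphism (one for which $V^u$ is a free $\F[t]$-module) is a $(p_1,p_2)$-sum; second, for an arbitrary $u$ one constructs endomorphisms $u_3,u_4$ with $p_3(u_3)=p_4(u_4)=0$ such that $u-u_3-u_4$ is elementary, by means of stratifications of $V^u$ and connectors. Your shift trick for $\lambda\,\id_V$ is correct and is in fact an instance of this scheme (both $T$ and $\lambda\,\id_V-T$ are elementary), but extending it beyond the all-square-zero case already requires Theorem~\ref{theo2} for an arbitrary split pair $(q_i,q_j)$, which you have not proved and which is not a triviality.

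The real gap is in your ``generic regime''. First, your dichotomy ``$u-\lambda\,\id$ has infinite rank for every $\lambda$'' versus ``$u=\mu\,\id+f$ with $f$ of finite rank'' is not exhaustive when $\dim V$ is uncountable: one may have $\aleph_0\le\rk(u-\lambda_0\,\id)<\dim V$ for some $\lambda_0$, and then your splitting (b) cannot exist. Indeed, suppose $V=X\oplus Y$ with $u(X)\subseteq Y$ and $u|_X$ injective. If $x\in X$ satisfies $(u-\lambda_0\,\id)(x)=0$, then $\lambda_0 x=u(x)\in Y$, and since $\lambda_0 x\in X$ as well we get $\lambda_0 x=0$, whence $u(x)=0$ and $x=0$ by injectivity. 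Thus $X\cap\Ker(u-\lambda_0\,\id)=\{0\}$, so $\dim X\le\rk(u-\lambda_0\,\id)<\dim V$, contradicting $\dim X=\dim V$. Second, even when the splitting (b) does exist, peeling off the lower-left corner leaves $\bigl(\begin{smallmatrix}0&B\\0&C\end{smallmatrix}\bigr)$ with $C\in\End(Y)$ completely arbitrary; your assertion that this ``reduces, by a change of basis, to shift and scalar building blocks'' is the original problem transported to $Y$ and is nowhere justified. (Nor are your two formulations actually equivalent: scalar pieces in the decomposition (a) cannot be accommodated by the splitting (b).) The approach in \cite{dSPSum4} avoids all of this by working with \emph{filtrations} of $V^u$ rather than with direct-sum decompositions into $u$-stable pieces, and by designing the connectors so that subtracting them from $u$ yields an elementary endomorphism.
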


In particular, every endomorphism of an infinite-dimensional vector space
is the sum of four square-zero endomorphisms, but also of four idempotents, of two idempotents and two square-zero endomorphisms,
etc. This contrasts with two results that were previously known:
\begin{itemize}
\item If $V$ is a finite-dimensional vector space, then an endomorphism of $V$ is the sum of four square-zero endomorphisms if and only if its trace equals zero (see \cite{WangWu} for the case of a complex vector space, and \cite{dSP3squarezero} for the general case).
\item If $\F=\C$ and $V$ is a Hilbert space, then any bounded operator on $V$ is the sum of five square-zero bounded operators \cite{PearcyTopping}, and a bounded operator on $V$ is the sum of four square-zero bounded operators if and only if it is a commutator
    \cite{WangWu}.
\end{itemize}
Compared to the latter result, Theorem \ref{theo4} is purely algebraic, and no structure from analysis is involved.

In \cite{dSPSum4}, it was shown through various examples that four summands are necessary in
Theorem \ref{theo4}. To be more precise, if we have three split polynomials $p_1,p_2,p_3$ with degree $2$ over $\F$,
in general there exist endomorphisms of $V$ that fail to be $(p_1,p_2,p_3)$-sums.
Thus, a natural question is whether a simple characterization of $(p_1,p_2,p_3)$-sums
can be obtained. In this work, we shall obtain an answer that is very close to a positive one.
More precisely, we shall obtain such a characterization if we exclude very specific endomorphisms,
specifically those that split into $\lambda \id_V+w$ where $\lambda$ is a scalar and $w$ is a finite-rank endomorphism of $V$.
For such special endomorphisms, no characterization appears possible in general, but in the special case
when $p_1=p_2=p_3=t^2$ we shall nevertheless succeed in obtaining one, leading to a complete characterization of the
sums of three square-zero endomorphisms. In addition, we will give a full characterization of the endomorphisms that
split into the sum of three idempotents if the underlying field has characteristic $2$,
and we will prove that every endomorphism of an infinite-dimensional vector space is a linear combination of three idempotents,
a result that was known to hold over finite-dimensional vector spaces \cite{Rabanovich,dSPLC3}.

\section{Main results, and the strategy}

\subsection{Main results}

\begin{Def}
Let $V$ be an infinite-dimensional vector space and $u \in \End(V)$.
A scalar $\lambda$ is called a \textbf{dominant eigenvalue} of $u$ if $\rk(u-\lambda\,\id_V)<\dim V$.
\end{Def}

Note that this implies that $\lambda$ is actually an eigenvalue of $u$, that is $\Ker(u-\lambda \,\id_V) \neq \{0\}$.
Moreover, $u$ has at most one dominant eigenvalue.
Indeed, given distinct scalars $\lambda$ and $\mu$
we have $\Ker(u-\mu \id_V) \subset \im(u-\lambda \id_V)$, and hence
$\rk(u-\mu \id_V)+\rk(u-\lambda \id_V) \geq \dim V$: since $V$ is infinite-dimensional, it follows that at most one of
$\rk(u-\mu \id_V)$ and $\rk(u-\lambda \id_V)$ is less than $\dim V$.

\vskip 3mm
Here is our first major result:

\begin{theo}\label{theo3}
Let $u$ be an endomorphism of an infinite-dimensional vector space over $\F$, with no dominant eigenvalue.
Let $p_1,p_2,p_3$ be split polynomials of degree $2$ over $\F$.
Then, $u$ is a $(p_1,p_2,p_3)$-sum.
\end{theo}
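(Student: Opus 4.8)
The plan is to recast the statement as the existence of a single direct-sum decomposition $V=X\oplus Y$ whose diagonal compressions are annihilated by prescribed quadratic polynomials, to note that this is easy for ``shift-type'' endomorphisms, and to obtain the general case through a transfinite construction in which the absence of a dominant eigenvalue supplies, at every step, the room that is needed.

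\textbf{Reduction to a block-decomposition statement.} Write $p_k=(t-a_k)(t-b_k)$ and set $d_k:=b_k-a_k$. For $u_k\in\End(V)$ one has $p_k(u_k)=0$ precisely when $w_k:=u_k-a_k\,\id_V$ satisfies $w_k^2=d_kw_k$; hence $u$ is a $(p_1,p_2,p_3)$-sum if and only if $w:=u-(a_1+a_2+a_3)\,\id_V$ splits as $w=w_1+w_2+w_3$ with $w_k^2=d_kw_k$ for all $k$, and $w$ has no dominant eigenvalue as soon as $u$ has none. Now fix any decomposition $V=X\oplus Y$ and write $w$ in block form $\begin{pmatrix}\alpha&\beta\\\gamma&\delta\end{pmatrix}$, where $\alpha=\pi_X\circ w|_X$, $\delta=\pi_Y\circ w|_Y$, etc. Then the choices $w_1:=\begin{pmatrix}0&\beta\\0&d_1\,\id_Y\end{pmatrix}$ and $w_2:=\begin{pmatrix}d_2\,\id_X&0\\\gamma&0\end{pmatrix}$ satisfy $w_1^2=d_1w_1$ and $w_2^2=d_2w_2$ unconditionally, and $w_3:=w-w_1-w_2$ is block-diagonal with diagonal blocks $\alpha-d_2\,\id_X$ and $\delta-d_1\,\id_Y$. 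Hence it suffices to find $X$ and $Y$ for which
\[(\alpha-d_2\,\id_X)\bigl(\alpha-(d_2+d_3)\,\id_X\bigr)=0\quad\text{and}\quad(\delta-d_1\,\id_Y)\bigl(\delta-(d_1+d_3)\,\id_Y\bigr)=0 .\]
In the archetypal case $p_1=p_2=p_3=t^2$ this simply requires $\alpha^2=\delta^2=0$, i.e. that $u$ admit a block decomposition with square-zero diagonal blocks; swapping $X$ and $Y$ and permuting the indices yields several equivalent targets, which adds flexibility.

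\textbf{Model pieces and gluing.} The decomposition is immediate for ``shift-type'' $w$: if $V=\bigoplus_{n\in\Z}W_n$ (or $\bigoplus_{n\geq0}W_n$) with $w(W_n)\subseteq W_{n+1}$ for every $n$, take $X:=\bigoplus_{n\text{ even}}W_n$ and $Y:=\bigoplus_{n\text{ odd}}W_n$, so that $w$ is off-diagonal and $\alpha=\delta=0$; the same holds for backward-shift-type gradings. For a general triple $(p_1,p_2,p_3)$ one must in addition peel off the scalar parts of the summands, where known techniques for writing operators as linear combinations of idempotents come into play, but the geometric core is always this block decomposition. Finally, the direct sum over a $w$-invariant decomposition $V=\bigoplus_iV_i$ of triples witnessing the property for the $w|_{V_i}$ is again a witnessing triple, so it suffices to treat the pieces of any convenient $w$-invariant decomposition.

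\textbf{The general case, and the main obstacle.} For an arbitrary $u$ without dominant eigenvalue I would build the decomposition $V=X\oplus Y$ by transfinite recursion along a well-ordered basis of $V$: one constructs increasing families of subspaces $(X_\xi)$ and $(Y_\xi)$ with $X_\xi\cap Y_\xi=\{0\}$, forces the $\xi$-th basis vector into $X_{\xi+1}\oplus Y_{\xi+1}$, and maintains an invariant ensuring that the compressions under construction stay on track to satisfy the two quadratic identities above in the limit. The essential point is that at a successor step the hypothesis of no dominant eigenvalue, namely $\rk(w-\lambda\,\id_V)=\dim V$ for every scalar $\lambda$, guarantees that the images and preimages of the maps $w-\lambda\,\id_V$ that occur are large enough to let one choose the new generators of $X$ and $Y$ without breaking the invariant; this is exactly the freedom that is absent for an operator of the form $\lambda\,\id_V+(\text{finite rank})$, which is why such operators are the genuine exceptions. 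An alternative is to first decompose $u$, over a $w$-invariant decomposition, into endomorphisms of a short list of ``primitive'' types and to argue type by type; but examples such as multiplication by $t$ on $\F(t)$, and its torsion-augmented companions like $\F[t]/(t-\lambda)\oplus\F(t)$, show that a finite-dimensional torsion summand may be glued inseparably onto an indecomposable infinite-dimensional piece, so that the construction must then be performed on the combined piece. In both routes the crux --- and the step I expect to be hardest --- is the bookkeeping: isolating an invariant rigid enough to force genuinely quadratic compressions at the limit, yet flexible enough to be maintained at every successor step using only the absence of a dominant eigenvalue.
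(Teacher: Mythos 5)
Your block-reduction step is algebraically correct: with $w_1=\begin{pmatrix}0&\beta\\0&d_1\,\id_Y\end{pmatrix}$, $w_2=\begin{pmatrix}d_2\,\id_X&0\\\gamma&0\end{pmatrix}$ one indeed has $w_1^2=d_1w_1$, $w_2^2=d_2w_2$, and $w_3:=w-w_1-w_2$ is block-diagonal, so it would suffice to produce $V=X\oplus Y$ such that the compressions $\alpha=\pi_X\circ w|_X$ and $\delta=\pi_Y\circ w|_Y$ are annihilated by the indicated quadratics. But that existence statement is the entire theorem, and your argument for it stops at a plan. You say you would run a transfinite recursion "maintaining an invariant ensuring that the compressions under construction stay on track," and you yourself flag that finding this invariant is "the step I expect to be hardest" --- that step is never carried out. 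The difficulty is real: $\alpha$ and $\delta$ are compressions, not restrictions, so they depend jointly on the pair $(X,Y)$; adding one more vector to $X$ at a successor stage retroactively changes $\pi_X$ on earlier vectors, and it is not clear what finite certificate one should carry along so that in the limit $(\alpha-d_2\,\id_X)(\alpha-(d_2+d_3)\,\id_X)=0$ genuinely holds on all of $X$. You also give no argument that such a two-block decomposition always exists; your reduction is a sufficient condition, not an equivalent reformulation, so achievability is itself in doubt. As it stands, the proposal proves the theorem only for the "shift-type" model case and defers the general case.

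For comparison, the paper avoids the two-block target altogether. By Remark \ref{canonicalremark} it reduces to $p_i=t^2-a_it$, and then peels off a single quadratic summand: it finds $v$ with $v^2=av$ such that $u-v$ is \emph{elementary} (i.e.\ $V^{u-v}$ is a free $\F[t]$-module), and invokes the previously established Theorem \ref{theo2} to split the elementary $u-v$ as a $(p_2,p_3)$-sum. The existence of $v$ is where the transfinite work happens, organized through \emph{good stratifications} of $V^u$ and \emph{connectors}: a good stratification always exists in the uncountable-dimensional case and in the countable torsion case, while the countable non-torsion case requires the additional sewing lemma (Lemma \ref{sewinglemma}). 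That asymmetric peel-one-off strategy is precisely the bookkeeping device whose absence is the gap in your writeup.
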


Knowing this result, it only remains to understand when an endomorphism with a dominant eigenvalue is a $(p_1,p_2,p_3)$-sum.
To do this, we recall that the \textbf{trace} of a monic polynomial $p \in \F[t]$
with degree $n>0$ is defined as the opposite of the coefficient of $p$ on $t^{n-1}$.
The trace of a nonconstant polynomial $p$ with leading coefficient $\alpha$ is defined as the one of $\alpha^{-1} p$,
and denoted by $\tr p$.

When we have an endomorphism $u$ with a dominant eigenvalue $\lambda$, the
following result gives a necessary condition on $\lambda$ for $u$ to be a $(p_1,p_2,p_3)$-sum.

\begin{theo}\label{dominanteigenvalueCN}
Let $V$ be an infinite-dimensional vector space over $\F$.
Let $p_1,p_2,p_3$ be split polynomials with degree $2$ over $\F$.
Let $u$ be an endomorphism of $V$ with a dominant eigenvalue $\lambda$, and
assume that $u$ is a $(p_1,p_2,p_3)$-sum.
Then, $\lambda$ is a $(p_1,p_2,p_3)$-sum or $2\lambda=\tr p_1+\tr p_2+\tr p_3$.
\end{theo}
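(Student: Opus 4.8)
The plan is to work in the quotient construction naturally attached to a dominant eigenvalue. Write $u = \sum_{k=1}^3 u_k$ with $p_k(u_k) = 0$ for each $k$, and set $p_k(t) = (t-a_k)(t-b_k)$ so that $\tr p_k = a_k + b_k$. The idea is that, restricted to a suitable large subspace, each $u_k$ behaves like a diagonal operator with eigenvalues $a_k$ and $b_k$, and $u$ behaves like $\lambda\,\id$; comparing the two should force either a "scalar-level" decomposition $\lambda = x_1 + x_2 + x_3$ with $p_k(x_k) = 0$, or the degenerate trace equality.

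First I would exploit the dominant eigenvalue: since $\rk(u - \lambda\,\id_V) < \dim V$, the subspace $W := \Ker(u - \lambda\,\id_V)$ has dimension equal to $\dim V$ (its codimension is at most $\rk(u-\lambda\,\id_V) < \dim V$, and in an infinite-dimensional space this forces $\dim W = \dim V$). On $W$, $u$ acts as $\lambda\,\id_W$. Each $u_k$ is annihilated by $p_k$, so $V = \Ker(u_k - a_k\,\id_V) \oplus \Ker(u_k - b_k\,\id_V)$ when $a_k \neq b_k$, and $u_k$ is scalar $a_k\,\id_V$ when $a_k = b_k$ (here I use $\car \F \neq 2$ implicitly; the characteristic $2$ case where $p_k$ has a double root needs separate, easier handling since then $u_k - a_k\,\id_V$ is square-zero). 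The key step is to pass to a common "eigenbasis-like" refinement: intersecting $W$ with the eigenspaces of $u_1, u_2, u_3$, a counting/dimension argument should produce a nonzero vector $x \in W$ that is simultaneously an eigenvector of each $u_k$, say $u_k x = c_k x$ with $c_k \in \{a_k, b_k\}$. Then $\lambda x = u x = (c_1+c_2+c_3)x$ gives $\lambda = c_1 + c_2 + c_3$ with $p_k(c_k) = 0$, i.e.\ $\lambda$ is a $(p_1,p_2,p_3)$-sum.

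The main obstacle is that a common eigenvector need not exist: the three decompositions $V = \Ker(u_k-a_k\,\id_V)\oplus\Ker(u_k-b_k\,\id_V)$ need not be compatible, and intersecting three such pairs of subspaces with $W$ can a priori yield only $\{0\}$ (for instance when $a_k = b_k$ for every $k$, forcing $u = (a_1+a_2+a_3)\,\id_V$ on all of $V$, which is exactly the borderline case). I expect the right tool is to localize at $W$ and analyze the finite-dimensional-modulo-small-rank picture: the operators $u_k|_W$ composed with the projection $W \to W/(W\cap \text{small})$ are semisimple with at most two eigenvalues each, and their sum is $\lambda\,\id$ on a space of full dimension. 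A pigeonhole argument on which pairs $(c_1,c_2,c_3) \in \{a_1,b_1\}\times\{a_2,b_2\}\times\{a_3,b_3\}$ can occur shows that if none satisfies $c_1+c_2+c_3 = \lambda$ then the constraint $\sum u_k|_W = \lambda\,\id$ can only be met on a subspace of dimension $< \dim V$ — unless the two possible sums degenerate to a single value, which happens precisely when $a_k + b_k$ is independent of the choices on the relevant coordinates; tracking this degeneracy carefully is what produces the alternative conclusion $2\lambda = \tr p_1 + \tr p_2 + \tr p_3$ (the average of the $2^3$ possible sums being $\tfrac12\sum_k(a_k+b_k)$, and collapse forces $\lambda$ to equal that average). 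Finishing the proof amounts to making this collapse analysis precise and checking the characteristic $2$ and repeated-root cases by hand.
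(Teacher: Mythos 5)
Your plan is to restrict to $W := \Ker(u - \lambda\,\id_V)$ and look for a common eigenvector of $u_1,u_2,u_3$ inside it, falling back on an unspecified ``collapse analysis'' when none exists. This has two genuine gaps, and it is not the route the paper takes.

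First, the $u_k$ do not stabilize $W$, so ``$u_k|_W$'' is not an endomorphism of $W$, and composing with a projection onto a quotient of $W$ does not preserve the relation $p_k(u_k)=0$ (nor is it clear that $u_k(W)$ stays inside $W$ plus a small subspace). The paper's move is dual to yours: rather than working in a large subspace on which $u$ acts as $\lambda\,\id$, it applies an invariant subspace lemma to produce a subspace $\overline{W}$ with $\dim\overline{W}<\dim V$ that contains $\im(u-\lambda\,\id_V)$ and is simultaneously stable under $u_1,u_2,u_3$; the construction simply closes $\im(u-\lambda\,\id_V)$ under words of length at most two in $u_1,u_2,u_3$, with words of length $\geq 3$ falling back into the span by quadraticity. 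Passing to $V/\overline{W}$ then turns each $u_k$ into an endomorphism annihilated by $p_k$ whose sum is exactly $\lambda\,\id_{V/\overline{W}}$, reducing to the scalar case $u=\lambda\,\id$ on an infinite-dimensional space.

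Second, even in that scalar case a common eigenvector does not follow from counting dimensions or pigeonhole: three diagonalizable operators need not commute and need not have a common eigenvector, and your proposed collapse argument is not spelled out in a way that could be checked. The paper's Proposition on scalar multiples instead \emph{proves commutativity}. Writing $u=\lambda\,\id=a+b+c$ with $p_i$ monic of traces $\alpha,\beta,\gamma$, one sets $w:=(b+c)\bigl((\beta+\gamma)\,\id-b-c\bigr)$ and checks directly that $w$ commutes with $b$ and with $c$; on the other hand, substituting $b+c=\lambda\,\id-a$ and expanding with $a^2\in\alpha a+\F\,\id$ gives $w=(2\lambda-\alpha-\beta-\gamma)\,a+\delta'\,\id$. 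If $2\lambda\neq\alpha+\beta+\gamma$ the scalar coefficient is nonzero, so $a$ commutes with $b$ and $c$; by symmetry $b$ commutes with $c$, and pairwise commuting split-quadratic operators on a nonzero space have a common eigenvector, yielding $\lambda=x+y+z$ with $p_i(x)=p_i(y)=p_i(z)=0$ appropriately. This algebraic commutativity lemma is the heart of the proof and is missing from your proposal; without it the case analysis on eigenvalue triples has no purchase.
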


The above condition turns out to be sufficient unless $u-\lambda\,\id_V$ has finite rank:

\begin{theo}\label{dominanteigenvalueCS}
Let $V$ be an infinite-dimensional vector space over $\F$.
Let $p_1,p_2,p_3$ be split polynomials with degree $2$ over $\F$.
Let $u$ be an endomorphism of $V$ with a dominant eigenvalue $\lambda$
such that $u-\lambda\,\id_V$ has infinite rank.
Assume that $\lambda$ is a $(p_1,p_2,p_3)$-sum
or $2\lambda=\tr p_1+\tr p_2+\tr p_3$.
Then, $u$ is a $(p_1,p_2,p_3)$-sum.
\end{theo}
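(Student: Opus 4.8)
The plan is to reduce the problem, by a translation, to the case $\lambda = 0$ while keeping track of the trace condition. Indeed, if $u$ is a $(p_1,p_2,p_3)$-sum then so is $u - 3c\,\id_V$ as a $(p_1(t+c),p_2(t+c),p_3(t+c))$-sum for any scalar $c$, because the translates $p_k(t+c)$ are again split of degree $2$; and the quantity $\tr p_1 + \tr p_2 + \tr p_3 - 2\lambda$ is invariant under replacing each $p_k(t)$ by $p_k(t+c)$ and $\lambda$ by $\lambda - 3c$ only if $6c = 2\cdot 3c$, which holds. So without loss of generality I may assume $\lambda = 0$: we are given $u$ with dominant eigenvalue $0$, i.e. $\rk u < \dim V$, and with $u = u$ of infinite rank, and either $0$ is a $(p_1,p_2,p_3)$-sum or $\tr p_1+\tr p_2+\tr p_3 = 0$; we must write $u$ as a $(p_1,p_2,p_3)$-sum.

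The key structural step is to use the dominant eigenvalue $0$ to split $V$ into manageable pieces. Since $W := \Ker u$ satisfies $\dim W = \dim V$ (as $\rk u < \dim V$ and $V$ is infinite-dimensional, a rank--nullity style argument over the cardinal $\dim V$ gives $\dim\Ker u = \dim V$), and since $\rk u$ is infinite by hypothesis, I would first peel off from $V$ a $u$-invariant decomposition $V = V_0 \oplus V_1$ where $V_1$ has dimension $\dim V$, $u$ restricts to $0$ on a large complement, and $V_0$ carries a "core" part on which $u$ acts with infinite rank but is, roughly, built out of countably many Jordan-type blocks or a shift-like operator. Concretely I expect to produce a basis-adapted normal form: a decomposition of $V$ into $u$-invariant subspaces, each either a line in $\Ker u$ (there are $\dim V$ of them) or a countable-dimensional block on which $u$ acts as a weighted shift (or a single Jordan block $J$ plus its scalar part). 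On each such block the known finite/countable-dimensional building-block lemmas — the matrix analogues of $(p_1,p_2,p_3)$-sums, which the earlier sections of the paper surely establish — let me realize the block as a $(p_1,p_2,p_3)$-sum, and the abundance of $\dim V$ trivial kernel lines lets me "absorb" the scalar bookkeeping forced by Theorem \ref{dominanteigenvalueCN}. In the case where $0$ itself is a $(p_1,p_2,p_3)$-sum, say $0 = x_1+x_2+x_3$ with $p_k(x_k)=0$, I can put $x_k\,\id$ on the huge kernel part freely; in the case $\tr p_1+\tr p_2+\tr p_3 = 0$ I instead use, on pairs of kernel lines, $2\times 2$ blocks of the form $\begin{pmatrix} a & * \\ 0 & b\end{pmatrix}$ realizing $0$ as a sum of three such with the correct annihilating polynomials, which is possible precisely because the traces add to $0$ — this is the standard "companion-type" trick.

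The technical heart, and the main obstacle, is handling the infinite-rank core $u|_{V_0}$ uniformly in the three given polynomials $p_1,p_2,p_3$. The difficulty is that $p_1,p_2,p_3$ may have very different root structures (e.g. one could be $t^2$, another $(t-1)^2$, another $t^2-1$), so there is no single "model" computation; I expect to need a case analysis on how many of the $p_k$ are "split with distinct roots" versus "a perfect square $(t-c_k)^2$", mirroring whatever taxonomy was set up when proving Theorem \ref{theo3}. The plan is to first prove the result when $u$ is a direct sum of a nilpotent shift of infinite rank and a zero map on a $\dim V$-dimensional space — this should follow by adapting the construction used for Theorem \ref{theo3} (where $u$ had no dominant eigenvalue), now localized to $V_0$ while the dominant-eigenvalue constraint is satisfied on $V_1$ — and then to reduce the general $u$ with dominant eigenvalue $0$ and infinite rank to that model. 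That last reduction uses the fact that $u - 0\cdot\id_V = u$ has infinite rank to find inside $V$ an infinite-rank "shift-like" sub-block together with a $\dim V$-dimensional invariant kernel complement; decomposing $V$ this way, applying the model case on one summand and the scalar/trace trick on the other, and recombining, finishes the proof. I expect the recombination to be routine once the model case and the scalar case are in hand, so the real work is the model case, i.e. essentially re-running the Theorem \ref{theo3} machine in the presence of a forced dominant eigenvalue.
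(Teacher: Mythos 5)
The high-level shape of your plan is close to the paper's, but you miss the one observation that makes the proof short, and as a result you propose to redo a large amount of work that the theory already provides as a black box. The paper's Lemma \ref{reductionlemma} shows that one can choose the invariant splitting $V=V_1\oplus V_2$ so that $u$ acts as $\lambda\,\id$ on $V_2$ \emph{and} the induced endomorphism $u_{|V_1}$ has \emph{no} dominant eigenvalue at all (with $V_1$ infinite-dimensional, of dimension $\rk(u-\lambda\,\id_V)$). Once this is in hand, nothing needs to be ``re-run'': one quotes Theorem \ref{theo3} verbatim on $V_1$, applies Proposition \ref{homothetieprop} to $\lambda\,\id_{V_2}$, and recombines by Remark \ref{directsumremark}. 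Your plan instead says you must re-run the Theorem \ref{theo3} machinery ``in the presence of a forced dominant eigenvalue'' on the core, and to make that tractable you want to put the core into a normal form made of ``countably many Jordan-type blocks or a shift-like operator.'' That step would fail: a general endomorphism of an infinite-dimensional space with dominant eigenvalue $0$ does not decompose into weighted shifts or Jordan blocks, and even if it did, re-deriving Theorem \ref{theo3} under a dominant-eigenvalue constraint is precisely what one wants to avoid (and is exactly why the reduction lemma is engineered to \emph{remove} the dominant eigenvalue from the core rather than carry it along). The mechanism of the reduction lemma is worth internalizing: after replacing $u$ by $u-\lambda\,\id_V$, one takes a complement $W$ of $\Ker u$, chooses $W'\subset\Ker u$ with $\dim W'=\rk u$ disjoint from $W+\im u$, and sets $V_1:=W'\oplus(W+\im u)$; the padding $W'$ forces every $\Ker(u_1-\alpha\,\id)$ to have full codimension in $V_1$, so $u_1$ has no dominant eigenvalue.

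Two smaller points. First, your initial reduction to $\lambda=0$ needs $c=\lambda/3$, which is unavailable in characteristic $3$; you would either have to drop this normalization (it is not needed) or use a root-shift normalization as in Remark \ref{canonicalremark}. Second, your treatment of the kernel part is essentially correct in spirit: pairing kernel lines into $2\times 2$ blocks when $\tr p_1+\tr p_2+\tr p_3=2\lambda$, or using scalar summands when $\lambda$ is itself a $(p_1,p_2,p_3)$-sum, is exactly the content of Proposition \ref{homothetieprop}. So the scalar half of your plan is fine; the core half is where the argument would not go through.
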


Hence, it only remains to understand when the sum of $\lambda \id_V$
with a finite-rank endomorphism is a $(p_1,p_2,p_3)$-sum. We will show in Section \ref{finiteranksection} that
this amounts to determine, given a scalar $\lambda$ that satisfies the condition from Theorem \ref{dominanteigenvalueCS},
for which square matrices $A \in \Mat_n(\F)$ there exists an integer $q \geq 0$ such that $(A+\lambda I_n)\oplus \lambda I_q$ is a
$(p_1,p_2,p_3)$-sum, a problem that is open for general values of $(p_1,p_2,p_3)$.
Nevertheless, for very specific values of $(p_1,p_2,p_3)$ we shall obtain a complete characterization.
Our first complete result deals with the case when $p_1=p_2=p_3=t^2$, i.e.\ we will completely characterize
the endomorphisms that split into the sum of three square-zero endomorphisms:

\begin{theo}\label{3squarezero}
Let $u \in \End(V)$, where $V$ is an infinite-dimensional vector space over $\F$.
Then, $u$ is the sum of three square-zero endomorphisms if and only if none of the following situations occurs:
\begin{enumerate}[(i)]
\item There exists $\lambda \in \F$ together with a finite-rank endomorphism $w \in \End(V)$ such that $\tr w \not\in \{0,\lambda\}$ and $u=\lambda\,\id_V+w$.
    \item The characteristic of $\F$ differs from $2$ and $u$ has a non-zero dominant eigenvalue.
\end{enumerate}
\end{theo}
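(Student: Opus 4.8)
The plan is to derive the characterization by combining Theorems \ref{theo3}, \ref{dominanteigenvalueCN}, and \ref{dominanteigenvalueCS} with a dedicated analysis of the remaining finite-rank case. Set $p_1=p_2=p_3=t^2$, so that each $\tr p_k=0$ and the scalar $(p_1,p_2,p_3)$-sums are exactly $\{0\}$; the numerical condition ``$\lambda$ is a $(t^2,t^2,t^2)$-sum or $2\lambda=0$'' thus reads ``$\lambda=0$ or $\car\F=2$''. First I would handle the easy direction: assuming $u$ is a sum of three square-zero endomorphisms, I show neither (i) nor (ii) holds. For (ii), if $u$ has a non-zero dominant eigenvalue $\lambda$, Theorem \ref{dominanteigenvalueCN} forces $\lambda=0$ or $2\lambda=0$, whence $\car\F=2$; contrapositively, if $\car\F\neq 2$ then $u$ has no non-zero dominant eigenvalue. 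For (i), suppose $u=\lambda\,\id_V+w$ with $w$ of finite rank; I must prove $\tr w\in\{0,\lambda\}$. Here I would argue on a large finite-dimensional subspace: pick $u_1,u_2,u_3$ square-zero with sum $u$, choose a finite-dimensional subspace $W$ containing $\im w$ and stable under all $u_k$ and under $u$ up to enlargement, so that in a suitable decomposition $u$ acts on $V/W'$ as $\lambda\,\id$ while each $u_k$ restricts to a square-zero map modulo a finite-rank correction; taking traces on a carefully chosen finite-dimensional invariant subspace on which everything is represented, and using that a square-zero matrix has trace $0$, I get the trace constraint. (Alternatively, invoke the finite-dimensional result of \cite{dSP3squarezero}: $A$ is a sum of three square-zero matrices iff $\tr A=0$, applied to the finite block arising from $u$.)

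For the converse, assume neither (i) nor (ii) holds; I must build the decomposition. I split into cases according to whether $u$ has a dominant eigenvalue. If $u$ has no dominant eigenvalue, Theorem \ref{theo3} applies directly and we are done. If $u$ has a dominant eigenvalue $\lambda$, then by the failure of (ii) either $\lambda=0$ or $\car\F=2$; in both cases the numerical condition ``$\lambda=0$ or $2\lambda=0$'' of Theorem \ref{dominanteigenvalueCS} holds. Sub-case: $u-\lambda\,\id_V$ has infinite rank — then Theorem \ref{dominanteigenvalueCS} gives the decomposition immediately. Sub-case: $u-\lambda\,\id_V=:w$ has finite rank, i.e.\ $u=\lambda\,\id_V+w$; since (i) fails we know $\tr w\in\{0,\lambda\}$, and I must produce three square-zero summands by hand.

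The main obstacle is precisely this last finite-rank sub-case, and it is where I expect to invoke the reduction announced after Theorem \ref{dominanteigenvalueCS}: it suffices to find $q\geq 0$ such that, writing $w$ as acting on a finite-dimensional subspace via a matrix $B\in\Mat_n(\F)$ with $V=\F^n\oplus V'$ and $u=(B+\lambda I_n)\oplus \lambda\,\id_{V'}$, the matrix $(B+\lambda I_n)\oplus \lambda I_q$ is a sum of three square-zero matrices for some $q$; once this holds on a finite block, the complementary part $\lambda\,\id$ on an infinite-dimensional space is split off and absorbed using the infinite-dimensional machinery (each square-zero summand on the finite block is completed to a square-zero endomorphism of $V$). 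When $\lambda=0$ this is $B$ itself, and $\tr B=\tr w=0$, so the finite-dimensional theorem of \cite{dSP3squarezero} gives the result with $q=0$. When $\car\F=2$ and $\lambda\neq 0$, I would observe that $2\lambda=0$ and choose $q$ so that $\tr\bigl((B+\lambda I_n)\oplus\lambda I_q\bigr)=\tr B+ (n+q)\lambda = \tr w + (n+q)\lambda$ vanishes in characteristic $2$: since $\tr w\in\{0,\lambda\}$, picking $q\equiv n+1\pmod 2$ or $q\equiv n\pmod 2$ respectively makes $(n+q)\lambda=\lambda$ or $0$, forcing the total trace to $0$, and again \cite{dSP3squarezero} finishes the job. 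Assembling the three global square-zero endomorphisms of $V$ from the finite blocks plus the trivial action elsewhere completes the proof; the bookkeeping of how the finite block interacts with the $\lambda\,\id$ part is the only genuinely delicate point, and it follows the pattern already used in Section \ref{finiteranksection}.
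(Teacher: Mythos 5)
Your overall structure matches the paper's proof: reduce via Theorems \ref{theo3}, \ref{dominanteigenvalueCN}, \ref{dominanteigenvalueCS} applied to $p_1=p_2=p_3=t^2$, then isolate the residual case $u=\lambda\,\id_V+w$ with $w$ of finite rank and dispatch it through the $\lambda$-stable framework of Section \ref{finiteranksection} together with the finite-dimensional results of \cite{dSP3squarezero}. The forward direction and the trace bookkeeping $\tr w+(n+q)\lambda=0$ are exactly what the paper does.

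There is, however, a genuine gap in your converse finite-rank sub-case. You assert that the finite-dimensional result of \cite{dSP3squarezero} reads ``$A$ is a sum of three square-zero matrices iff $\tr A=0$,'' and conclude that when $\lambda=0$ one can take $q=0$. This is false: over a field of characteristic $p>2$, a square-zero matrix in $\Mat_p(\F)$ has rank at most $\lfloor p/2\rfloor<p$, so by a rank count a sum of three square-zero $p\times p$ matrices cannot be invertible when $3\lfloor p/2\rfloor<p$ is violated — more concretely, $I_3\in\Mat_3(\F_3)$ has trace $0$ yet is \emph{not} a sum of three square-zero $3\times 3$ matrices (every such square-zero matrix has rank at most $1$, and $I_3=x_1y_1^T+x_2y_2^T+x_3y_3^T$ with $y_i^Tx_i=0$ would force the diagonal of $Y^TX=I_3$ to vanish). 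Thus the $q=0$ shortcut can fail, and ``trace zero'' is not the correct statement of the finite-dimensional theorem — there are exceptional matrices. What actually rescues the argument is the need to take $q$ strictly positive (or arbitrarily large), which is precisely what the paper packages by invoking Corollaries 1.5 and 1.6 of \cite{dSP3squarezero}: those corollaries are formulated directly for finite-rank endomorphisms of an \emph{infinite-dimensional} $V$ and internalize the stabilization, asserting that trace zero (resp.\ trace in $\{0,\lambda\}$ in characteristic $2$) suffices in the infinite-dimensional setting. You should replace your appeal to a non-existent clean ``iff trace zero'' theorem by a citation of those corollaries, or at least argue that for a suitable $q$ the matrix $(A+\lambda I_n)\oplus\lambda I_q$ escapes the exceptional set.
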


Below, we rewrite this result by discussing whether the ground field has characteristic $2$ or not.

\begin{cor}
Let $V$ be an infinite-dimensional vector space over a field $\F$ with characteristic different from $2$,
and let $u \in \End(V)$.
Then, $u$ is the sum of three square-zero endomorphisms if and only if it satisfies none of the following conditions:
\begin{enumerate}[(i)]
\item $u$ has finite rank and non-zero trace;
\item $u$ has a non-zero dominant eigenvalue.
\end{enumerate}
\end{cor}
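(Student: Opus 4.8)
The plan is to read off this corollary from Theorem~\ref{3squarezero} by specializing to the case $\car \F \neq 2$ and checking that the two lists of exceptional situations describe the same endomorphisms. Let $B$ denote the set of endomorphisms of $V$ satisfying condition (i) or condition (ii) of Theorem~\ref{3squarezero}, and let $B'$ denote the set of those satisfying condition (i) or condition (ii) of the present corollary; since Theorem~\ref{3squarezero} says that the sums of three square-zero endomorphisms are exactly the members of $\End(V)\setminus B$, it suffices to prove $B=B'$. As $\car \F \neq 2$, condition (ii) of Theorem~\ref{3squarezero} reads literally ``$u$ has a non-zero dominant eigenvalue'', which is condition (ii) of the corollary, so the real content is the comparison of the two conditions labelled (i), carried out modulo that common condition. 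I expect no genuine obstacle: the argument is pure bookkeeping, and the only place where anything beyond formal manipulation is used is the observation that a finite-rank perturbation $\lambda\,\id_V+w$ of a non-zero scalar operator has $\lambda$ as a (non-zero) dominant eigenvalue, which is where infinite-dimensionality of $V$ enters.

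First I would establish the inclusion $B'\subseteq B$. The only case to treat is an endomorphism $u$ of finite rank with $\tr u\neq 0$: taking $\lambda:=0$ and $w:=u$, one has $\tr w=\tr u\notin\{0,\lambda\}=\{0\}$, so $u$ meets condition (i) of Theorem~\ref{3squarezero}, hence $u\in B$.

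Then I would establish $B\subseteq B'$. Suppose $u=\lambda\,\id_V+w$ with $w$ of finite rank and $\tr w\notin\{0,\lambda\}$. If $\lambda=0$, then $u=w$ has finite rank and $\tr u=\tr w\neq 0$, so $u$ satisfies condition (i) of the corollary. If $\lambda\neq 0$, then $\rk(u-\lambda\,\id_V)=\rk w$ is finite, hence strictly less than $\dim V$ because $V$ is infinite-dimensional, so $\lambda$ is a dominant eigenvalue of $u$; being non-zero, it shows $u$ satisfies condition (ii) of the corollary. In either case $u\in B'$. Combining the two inclusions with the identification of the two conditions (ii) gives $B=B'$, and the corollary follows at once from Theorem~\ref{3squarezero}.
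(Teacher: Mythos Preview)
Your argument is correct and matches the paper's intent: the corollary is presented there simply as a rewriting of Theorem~\ref{3squarezero} in the case $\car\F\neq 2$, with no separate proof given, and your bookkeeping makes that rewriting explicit. The key observation you isolate --- that for $\lambda\neq 0$ the decomposition $u=\lambda\,\id_V+w$ with $w$ of finite rank already forces $\lambda$ to be a non-zero dominant eigenvalue --- is exactly what collapses condition~(i) of Theorem~\ref{3squarezero} into conditions~(i) and~(ii) of the corollary.
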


\begin{cor}
Let $V$ be an infinite-dimensional vector space over a field $\F$ with characteristic $2$, and let $u \in \End(V)$.
Then, $u$ is the sum of three square-zero endomorphisms if and only if
there is no scalar $\lambda$ such that $u-\lambda\id_V$ has finite rank and trace different from $0$ and $\lambda$.
\end{cor}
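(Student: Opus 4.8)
The plan is to deduce this corollary directly from Theorem \ref{3squarezero} by specializing to $\car \F = 2$. First I would observe that in characteristic $2$, condition (ii) of Theorem \ref{3squarezero} is vacuous, since it explicitly requires $\car \F \neq 2$; so $u$ is a sum of three square-zero endomorphisms if and only if condition (i) of that theorem fails, i.e.\ there is no scalar $\lambda$ and no finite-rank $w \in \End(V)$ with $u = \lambda\,\id_V + w$ and $\tr w \notin \{0,\lambda\}$. The content of the corollary is then simply a rephrasing: writing $w = u - \lambda\,\id_V$, the negation of (i) reads ``for every $\lambda \in \F$, if $u - \lambda\,\id_V$ has finite rank, then $\tr(u-\lambda\,\id_V) \in \{0,\lambda\}$,'' which is exactly the stated condition once one notes that $\tr(u-\lambda\,\id_V) \neq \lambda$ is the same as $\tr(u-\lambda\,\id_V) \notin \{0,\lambda\}$ together with $\tr(u-\lambda\,\id_V)\neq 0$.

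The one genuine point to check is a well-definedness issue: a priori there could be two distinct scalars $\lambda_1 \neq \lambda_2$ with both $u - \lambda_1\,\id_V$ and $u - \lambda_2\,\id_V$ of finite rank. But then $(\lambda_2 - \lambda_1)\,\id_V = (u-\lambda_1\,\id_V) - (u-\lambda_2\,\id_V)$ would have finite rank, which is impossible on an infinite-dimensional space unless $\lambda_1 = \lambda_2$. Hence there is at most one candidate scalar $\lambda$ for which $u - \lambda\,\id_V$ has finite rank, and the condition in the corollary is a condition about that unique $\lambda$ (and vacuously satisfied if no such $\lambda$ exists). This also shows the phrasing of the corollary and of condition (i) of Theorem \ref{3squarezero} are about the same $\lambda$, so no ambiguity arises.

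There is essentially no obstacle here: this is a purely formal translation of Theorem \ref{3squarezero}, and the only thing one must be slightly careful about is the logical manipulation of the negation together with the uniqueness of $\lambda$ noted above. I would write the proof in two or three sentences, invoking Theorem \ref{3squarezero}, discarding (ii) because $\car \F = 2$, and rephrasing the negation of (i).
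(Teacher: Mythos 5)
Your approach is correct and is precisely the one the paper intends: the corollary is stated without proof as an immediate specialization of Theorem~\ref{3squarezero}, and in characteristic~$2$ condition~(ii) of that theorem is indeed vacuous, so the corollary is nothing but the negation of condition~(i) after substituting $w=u-\lambda\,\id_V$. One small remark: the clause where you assert that $\tr(u-\lambda\,\id_V)\neq\lambda$ ``is the same as'' $\tr(u-\lambda\,\id_V)\notin\{0,\lambda\}$ together with $\tr(u-\lambda\,\id_V)\neq 0$ is stated backwards (it is $\tr\notin\{0,\lambda\}$ that equals the conjunction $\tr\neq 0$ and $\tr\neq\lambda$), but this slip is inconsequential since ``trace different from $0$ and $\lambda$'' in the corollary already means $\tr\notin\{0,\lambda\}$, matching condition~(i) directly.
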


Our second special case is the one when $p_1=p_2=p_3=t^2-t$, over fields with characteristic $2$.

\begin{theo}\label{3idemcar2theo}
Let $V$ be an infinite-dimensional vector space over a field $\F$ with characteristic $2$,
and let $u \in \End(V)$. Then, $u$ is the sum of three idempotent endomorphisms of $V$
if and only if none of the following conditions holds:
\begin{enumerate}[(i)]
\item $u$ has a dominant eigenvalue outside of $\{0_\F,1_\F\}$;
\item There exists $\lambda \in \{0_\F,1_\F\}$ such that $u-\lambda\,\id_V$ has finite rank and trace outside of $\{0_\F,1_\F\}$.
\end{enumerate}
\end{theo}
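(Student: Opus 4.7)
The plan is to partition the analysis by whether $u$ has a dominant eigenvalue and, if so, whether $u-\lambda\,\id_V$ has finite rank; Theorems \ref{theo3}, \ref{dominanteigenvalueCN}, and \ref{dominanteigenvalueCS} handle three of the four cases, and the residual one is attacked via a reduction to finite-dimensional matrices. As a preliminary, I observe the following specializations in characteristic $2$ with $p_1=p_2=p_3=t^2-t$: one has $\tr p_i=1$, so $\tr p_1+\tr p_2+\tr p_3=1\neq 0=2\lambda$ for every $\lambda\in\F$, and the alternative $2\lambda=\tr p_1+\tr p_2+\tr p_3$ in Theorems \ref{dominanteigenvalueCN} and \ref{dominanteigenvalueCS} never occurs; moreover the scalar $(p_1,p_2,p_3)$-sums in $\F$ are precisely the elements of $\{0_\F,1_\F\}$.

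For the necessity direction, assume $u=e_1+e_2+e_3$ with each $e_i$ idempotent. If $u$ has a dominant eigenvalue $\mu$, Theorem \ref{dominanteigenvalueCN} combined with the observation above forces $\mu\in\{0_\F,1_\F\}$, so (i) fails. For (ii), I would exploit that $\id_V-e_3$ is idempotent and that substituting $\id_V-e_3$ for $e_3$ changes the sum from $u$ to $u+\id_V$ in characteristic $2$; hence sums of three idempotents form a class stable under addition of $\id_V$, and (ii) reduces to showing that if $w\in\End(V)$ is of finite rank and is a sum of three idempotents then $\tr w\in\{0_\F,1_\F\}$. The aim is to produce a finite-dimensional subspace $W\subset V$ containing $\im w$ and invariant under $e_1,e_2,e_3$: on such a $W$, the restriction $u|_W$ is a sum of three idempotents in $\End(W)$, so in characteristic $2$ its trace lies in $\F_2$, and since $\tr w=\tr(u|_W)-\lambda\dim W$ with $\lambda\dim W\in\F_2$ (as $\lambda\in\{0_\F,1_\F\}$), the conclusion follows.

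For the sufficiency direction, assume neither (i) nor (ii) holds. If $u$ has no dominant eigenvalue, Theorem \ref{theo3} yields the desired decomposition. Otherwise let $\lambda$ be the dominant eigenvalue; the failure of (i) forces $\lambda\in\{0_\F,1_\F\}$, so $\lambda$ is a $(p_1,p_2,p_3)$-sum. When $u-\lambda\,\id_V$ has infinite rank, Theorem \ref{dominanteigenvalueCS} applies directly. The remaining case is $u=\lambda\,\id_V+w$ with $w$ of finite rank and $\tr w\in\{0_\F,1_\F\}$ (from the failure of (ii)). By the forthcoming reduction of Section \ref{finiteranksection}, this translates into exhibiting a decomposition of some $(A+\lambda I_n)\oplus\lambda I_q$ as a sum of three idempotent matrices, where $A\in\Mat_n(\F)$ represents $w$ (with $\tr A=\tr w$) and $q\geq 0$ is to be chosen. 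The involution $E\mapsto I-E$ reduces this to the $\lambda=0_\F$ case: for every $A\in\Mat_n(\F)$ with $\tr A\in\{0_\F,1_\F\}$, there exists $q\geq 0$ such that $A\oplus 0_q$ is a sum of three idempotents. I would prove this by an explicit block construction, leveraging the known decompositions of finite matrices as sums (and linear combinations) of three idempotents from \cite{Rabanovich,dSPLC3}.

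The main obstacle is the construction of the $e_i$-invariant subspace $W$ in the necessity argument. The naive iteration $W_0:=\im w$, $W_{n+1}:=W_n+\sum_{i=1}^{3}e_i(W_n)$ yields an increasing chain of finite-dimensional subspaces, but the dimensions can grow by a factor of up to $4$ at each step, so the chain need not stabilize and $W_\infty:=\bigcup_n W_n$ might be infinite-dimensional. The key would be to exploit the relation $e_1+e_2+e_3=\lambda\,\id_V+w$ together with $e_i^2=e_i$ and the fact that $w$ is of finite rank: on the finite-codimensional subspace $\Ker w$ the sum $e_1+e_2+e_3$ collapses to $\lambda\,\id$, and the idea is to use this to express one $e_i$ in terms of the other two modulo a finite-dimensional error, thereby forcing the chain of $W_n$ to stabilize within finite dimension.
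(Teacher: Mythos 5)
Your high-level strategy is the same as the paper's: dispatch the no-dominant-eigenvalue case with Theorem~\ref{theo3}, the infinite-rank case with Theorems~\ref{dominanteigenvalueCN} and~\ref{dominanteigenvalueCS}, note that in characteristic~$2$ the alternative $2\lambda=\tr p_1+\tr p_2+\tr p_3$ is vacuous and the scalar $(p_1,p_2,p_3)$-sums are exactly $\{0_\F,1_\F\}$, and then reduce the residual finite-rank case to a statement about square matrices. The $E\mapsto I-E$ trick that you use to swap $\lambda=0_\F$ and $\lambda=1_\F$ is sound in characteristic~$2$. But there are two gaps, one essentially self-diagnosed and one more serious.

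The first gap is the construction of the finite-dimensional subspace invariant under $e_1,e_2,e_3$. You rightly flag that the naive chain $W_{n+1}=W_n+\sum_i e_i(W_n)$ need not stabilize, and your suggested fix --- exploiting $e_1+e_2+e_3=\lambda\,\id_V+w$ together with $e_i^2=e_i$ --- is exactly what the paper's Lemma~\ref{invariantsubspacelemma} does. Concretely, the right choice is $\overline{W}:=W+\sum_e e(W)+\sum_{(e,f)}(ef)(W)$, and one shows every third-order product $efg$ maps $W$ into $\overline{W}$ by combining the identity $(a+b)^2=(\lambda\,\id_V+w-c)^2$ (which controls $ab+ba$ modulo lower-order terms plus $\im w$) with $e^2\in\Vect(e,\id)$; this forces stabilization after a bounded number of compositions. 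The paper then packages this into Theorem~\ref{theofiniterank}, which is the clean reduction to the matrix question you are after. So this gap is closable by the method you gesture at, but it is a genuine lemma, not a small adjustment.

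The second gap is not closable by hand-waving. For sufficiency in the finite-rank case you must show: for every $A\in\Mat_n(\F)$ with $\tr A\in\{0_\F,1_\F\}$, there exists $q\ge 0$ such that $A\oplus 0_q$ is a \emph{sum} (not a linear combination) of three idempotent matrices. The references \cite{Rabanovich,dSPLC3} you invoke only give decompositions as \emph{linear combinations} of three idempotents, which is a strictly weaker statement and does not control the coefficients; an ``explicit block construction'' building the sum decomposition from them is not immediate, and in fact the characterization of stable sums of three idempotent matrices in characteristic~$2$ is a nontrivial result. The paper fills this gap by citing Corollary~1.7 of \cite{dSP3squarezero}, which directly asserts that $\lambda\,\id_V+w$ (with $\lambda\in\{0_\F,1_\F\}$, $w$ finite-rank, $\tr w\in\{0_\F,1_\F\}$) is a sum of three idempotents. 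Without that or an equivalent finite-dimensional theorem, your sufficiency argument in the finite-rank case remains open.
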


Our final result generalizes one that was known over finite-dimensional spaces \cite{Rabanovich,dSPLC3}:

\begin{theo}\label{LC3}
Let $V$ be a vector space over a field.
Then, every endomorphism of $V$ is a linear combination of three idempotent endomorphisms.
\end{theo}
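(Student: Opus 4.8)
The plan is to follow the same trichotomy that organizes Theorems~\ref{theo3}--\ref{dominanteigenvalueCS}. The finite-dimensional case is the known result \cite{Rabanovich,dSPLC3}, so I may assume $\dim V=\infty$. The elementary remark I would use repeatedly is that, for a nonzero scalar $\alpha$, an endomorphism $v$ satisfies $v^2=\alpha v$ if and only if $\alpha^{-1}v$ is idempotent; since moreover $\tr(t^2-\alpha t)=\alpha$, it follows that $u$ is a linear combination of three idempotents as soon as it is a $(t^2-\alpha_1 t,\,t^2-\alpha_2 t,\,t^2-\alpha_3 t)$-sum for \emph{some} nonzero scalars $\alpha_1,\alpha_2,\alpha_3$. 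First, if $u$ has no dominant eigenvalue, Theorem~\ref{theo3} applied with $p_1=p_2=p_3=t^2-t$ shows that $u$ is in fact a sum of three idempotents.

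Assume now that $u$ has a dominant eigenvalue $\lambda$, and first that $u-\lambda\,\id_V$ has infinite rank. I would choose nonzero $\alpha_1,\alpha_2,\alpha_3$ so that $\lambda$ is a $(t^2-\alpha_1 t,t^2-\alpha_2 t,t^2-\alpha_3 t)$-sum: take $(\alpha_1,\alpha_2,\alpha_3):=(\lambda,1_\F,1_\F)$ if $\lambda\neq 0$, so that $\lambda=\alpha_1+0+0$, and $(\alpha_1,\alpha_2,\alpha_3):=(1_\F,1_\F,1_\F)$ if $\lambda=0$, so that $\lambda=0+0+0$. Then Theorem~\ref{dominanteigenvalueCS} shows that $u$ is a $(t^2-\alpha_1 t,t^2-\alpha_2 t,t^2-\alpha_3 t)$-sum, hence a linear combination of three idempotents.

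There remains the case $u=\lambda\,\id_V+w$ with $w$ of finite rank. I would fix a splitting $V=W\oplus W'$ with $W$ finite-dimensional, $\im w\subseteq W$ and $W'\subseteq\Ker w$; then $W'$ is infinite-dimensional and $u=A\oplus(\lambda\,\id_{W'})$ with $A:=u_{|W}\in\End(W)$. When $\lambda=0$, writing $A=\alpha_1 P_1+\alpha_2 P_2+\alpha_3 P_3$ with each $P_i$ idempotent (the known finite-dimensional case), one has that each $P_i\oplus 0$ is idempotent and $u=\sum_i\alpha_i\,(P_i\oplus 0)$. When $\lambda\neq0$, for an integer $q\geq0$ to be chosen I would split further $W'=W'_q\oplus W''$ with $\dim W'_q=q$ and $W''$ infinite-dimensional, and then, identifying $W\oplus W'_q$ with $\F^{n+q}$ where $n:=\dim W$, we have $u=(A\oplus\lambda I_q)\oplus(\lambda\,\id_{W''})$ (this is the reduction announced in Section~\ref{finiteranksection}). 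It then suffices to find nonzero scalars $\alpha_1,\alpha_2,\alpha_3$ and an integer $q\geq0$ such that (a) $A\oplus\lambda I_q=\alpha_1 P_1+\alpha_2 P_2+\alpha_3 P_3$ for some idempotent matrices $P_1,P_2,P_3$, and (b) $\lambda$ is the sum of a sub-family of $\alpha_1,\alpha_2,\alpha_3$: indeed, setting $Q_i:=\id_{W''}$ if $\alpha_i$ lies in that sub-family and $Q_i:=0$ otherwise, the $E_i:=P_i\oplus Q_i$ are idempotent and $\sum_i\alpha_i E_i=u$.

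To secure (b) I would take nonzero $\alpha_1,\alpha_2$ with $\alpha_1+\alpha_2=\lambda$ and $\alpha_3$ any nonzero scalar, which is possible unless $\F=\F_2$ and $\lambda=1_\F$. For (a), I would put $\alpha_3:=\tr A$ if $\tr A\neq0$ and $\alpha_3:=1_\F$ otherwise, and use the freedom in $q$: since $\alpha_1+\alpha_2=\lambda$, the trace $\tr A+q\lambda$ of $A\oplus\lambda I_q$ equals $\alpha_1\cdot q+\alpha_2\cdot q+\alpha_3\cdot t_3$ with $t_3\in\{0,1\}$, which for every $q$ is an admissible value for the trace of a linear combination of three idempotents of $\Mat_{n+q}(\F)$ with coefficients $\alpha_1,\alpha_2,\alpha_3$. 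The hard part will be to upgrade this trace compatibility into an actual decomposition $A\oplus\lambda I_q=\alpha_1 P_1+\alpha_2 P_2+\alpha_3 P_3$ once $q$ is large enough: this requires the finite-dimensional result \cite{Rabanovich,dSPLC3} in a form that tracks the prescribed coefficients --- which may call for a refinement of the statements in the literature --- and it is the technical heart of the proof. The residual case $\F=\F_2$, $\lambda=1_\F$, is disposed of by hand: there $1_\F+1_\F+1_\F=1_\F$, so $\lambda\,\id_{W''}$ is the sum of three copies of $\id_{W''}$, and one applies the finite-dimensional result to $A\oplus I_q$ with all three coefficients equal to $1_\F$.
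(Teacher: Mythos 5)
Your treatment of the first two cases---no dominant eigenvalue, and dominant eigenvalue $\lambda$ with $u-\lambda\,\id_V$ of infinite rank---is correct and essentially identical to the paper's own proof: pick nonzero $a_1,a_2,a_3$ summing to $\lambda$, apply Theorem~\ref{theo3} or Theorem~\ref{dominanteigenvalueCS} with $p_i=t^2-a_it$, and observe that a $(t^2-a_1t,t^2-a_2t,t^2-a_3t)$-sum with all $a_i\neq 0$ is a linear combination of three idempotents.

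The gap is in the finite-rank case, and you have honestly flagged it yourself. Your scheme reduces the problem to finding idempotents $P_1,P_2,P_3$ of $\Mat_{n+q}(\F)$ with $A\oplus\lambda I_q=\alpha_1P_1+\alpha_2P_2+\alpha_3P_3$ where the \emph{coefficients $\alpha_1,\alpha_2,\alpha_3$ are prescribed in advance} (chosen so that some subfamily sums to $\lambda$). The trace count you perform is necessary but nowhere near sufficient; the actual existence of such a decomposition with prescribed coefficients is precisely what needs to be proved, and the finite-dimensional results you cite, \cite{Rabanovich} and \cite{dSPLC3}, in their stated form only say that some nonzero coefficients exist, not that you can impose them. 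Without establishing this ``coefficient-tracking'' version in finite dimension, the argument does not close. (The $\F_2$ subcase you handle separately does go through, since over $\F_2$ the coefficients are automatically in $\{0,1\}$ and the zero ones can be replaced by zero idempotents.)

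The paper sidesteps all of this: for $u=\lambda\,\id_V+w$ with $w$ of finite rank it simply invokes Corollary~6.2 of \cite{dSP3squarezero}, which is tailored exactly to the ``scalar plus finite-rank'' situation in infinite dimension and yields the conclusion directly, with no need to pass through the combinatorics of $\lambda$-stable decompositions or a refinement of the finite-dimensional theorem. So your overall trichotomy and the first two legs match the paper; the third leg, as you present it, requires an unproven strengthening of the finite-dimensional input, whereas the paper replaces that entire step by a citation to a result that already covers it.
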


\subsection{Some basic remarks and notation}

Following the French convention, we denote by $\N$ the set of all non-negative integers, by $\N^*$ the set of all positive ones,
and we use the word ``countable" to mean ``infinite countable", and ``uncountable" to mean ``infinite uncountable".
Throughout the article, $t$ denotes an indeterminate, $\F[t]$ the algebra of polynomials in the indeterminate $t$ and,
given a non-negative integer $d$, we denote by $\F_d[t]$ the linear subspace of $\F[t]$ consisting of all polynomials
with degree at most $d$.

Throughout the article, we will make frequent use of the following basic remarks.

\begin{Rem}\label{directsumremark}
Let $u$ be an endomorphism of a vector space $V$. Let $p_1,\dots,p_r$ be polynomials in $\F[t]$.
Assume that $V$ splits into $V=\underset{i \in I}{\bigoplus} V_i$ in which each $V_i$ is stable under $u$
and the resulting endomorphism is denoted by $u_i$.
Assume that, for all $i \in I$, the endomorphism $u_i$ splits into $u_i=\underset{k=1}{\overset{r}{\sum}} u_{i,k}$, where $u_{i,k}\in \End(V)$ and $p_k(u_{i,k})=0$ for all $k \in \lcro 1,r\rcro$.
Then, by setting $u^{(k)}:=\underset{ i \in I}{\bigoplus} u_{i,k}$ for all $k \in \lcro 1,r\rcro$, we see that
$u=\underset{k=1}{\overset{r}{\sum}} u^{(k)}$ and $p_k(u^{(k)})=0$ for all $k \in \lcro 1,r\rcro$.
\end{Rem}

\begin{Rem}[Reduction to the monic case]\label{monicremark}
Let $p$ be a non-zero polynomial, with leading coefficient $\lambda$.
An endomorphism is annihilated by $p$ if and only if it is annihilated by $\lambda^{-1} p$, a polynomial which has
the same degree and trace as $p$, and is split if and only if $p$ is split. Hence, in the proof of all the above theorems,
it will suffice to consider the case when the polynomials under consideration are all monic.
\end{Rem}

\begin{Rem}[The canonical situation]\label{canonicalremark}
In Theorem \ref{theo3}, no generality is lost in assuming that
each polynomial under consideration is of the form $t^2-at$ for some $a \in \F$.
Indeed, let $p_1,p_2,p_3$ be split polynomials with degree $2$ over $\F$.
Given $k \in \{1,2,3\}$, we denote the roots of $p_k$ by $x_k,y_k$. An endomorphism $v$ is annihilated by $p_k$
if and only if $q_k:=t^2-(y_k-x_k)t$ annihilates $v-x_k\id$.
It follows that an endomorphism $u \in \End(V)$ is a $(p_1,p_2,p_3)$-sum
if and only if $u-(x_1+x_2+x_3)\,\id_V$ is a $(q_1,q_2,q_3)$-sum.
Moreover, it is obvious that $u-(x_1+x_2+x_3)\,\id_V$ has a dominant eigenvalue if and only if $u$ does have one.
In particular, in Theorem \ref{theo3} it will suffice to consider the situation where each
$p_i$ equals $t^2-a_i t$ for some scalar $a_i$.
\end{Rem}

\subsection{Strategy, and structure of the article}

Our main strategy for the proof of Theorem \ref{theo3} is globally similar
to the one that was used in \cite{dSPSum4}.
First of all, let $u$ be an endomorphism of a vector space $V$ over $\F$. The vector space structure of $V$
is enriched into an $\F[t]$-module $V^u$ by setting $t\,x:=u(x)$ for all $x \in V$. We say that $u$ is \textbf{elementary}
when $V^u$ is a free $\F[t]$-module.
A basic result that was proved in \cite{dSPSum4} (Theorem 1 in that article) reads as follows:

\begin{theo}\label{theo2}
Let $u$ be an elementary endomorphism of a vector space $V$, and $p_1,p_2$ be split polynomials
with degree $2$ over $\F$. Then, $u$ is a $(p_1,p_2)$-sum.
\end{theo}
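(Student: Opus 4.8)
The plan is to reduce to an explicit model and then build the decomposition by hand. By Remark~\ref{monicremark} we may assume $p_1$ and $p_2$ are monic, say $p_k=(t-x_k)(t-y_k)$. An endomorphism $v$ satisfies $p_k(v)=0$ precisely when $v-x_k\,\id$ is annihilated by $t^2-(y_k-x_k)t$, so $u$ is a $(p_1,p_2)$-sum if and only if $u-(x_1+x_2)\,\id_V$ is a $(t^2-a_1t,t^2-a_2t)$-sum, where $a_k:=y_k-x_k$. Since the $\F[t]$-module structures of $V^u$ and $V^{u-c\,\id_V}$ differ by the automorphism $t\mapsto t-c$ of $\F[t]$, the latter endomorphism is again elementary; so we may assume $p_k=t^2-a_kt$. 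Finally, because $V^u$ is free, pick an $\F[t]$-basis $(e_i)_{i\in I}$ of it: each cyclic submodule $\F[t]e_i\cong\F[t]$ is $u$-stable and $u$ acts on it as multiplication by $t$. By Remark~\ref{directsumremark} it therefore suffices to treat the case $V=\F[t]$ with $u$ the shift $s$ defined by $s(t^n)=t^{n+1}$.

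The task becomes: write $s=g+b$ with $g^2=a_1g$ and $b^2=a_2b$. Once $g^2=a_1g$ holds, a direct expansion gives $b^2-a_2b=(s^2-a_2s)-\bigl(sg+gs-(a_1+a_2)g\bigr)$ for $b:=s-g$, so it is enough to find $g$ with $g^2=a_1g$ satisfying the \emph{linear} relation $sg+gs-(a_1+a_2)g=s^2-a_2s$. Writing $\epsilon_n:=t^n$, so that $s(\epsilon_n)=\epsilon_{n+1}$, and $g(\epsilon_j)=\sum_i g_{ij}\epsilon_i$, this linear relation reads $g_{i-1,j}+g_{i,j+1}-(a_1+a_2)g_{ij}=\delta_{i,j+2}-a_2\delta_{i,j+1}$ for all $i,j\ge 0$ (with the convention $g_{-1,j}:=0$). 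Thus the first column $g(\epsilon_0)\in\F[t]$ can be chosen freely, and then each further column is determined by the recursion $g_{i,j+1}=(a_1+a_2)g_{ij}-g_{i-1,j}+\delta_{i,j+2}-a_2\delta_{i,j+1}$, which keeps the columns finite, so that any choice of $g(\epsilon_0)$ produces a genuine endomorphism of $\F[t]$. In the degenerate case $a_1=a_2=0$ no machinery is needed: grouping $(\epsilon_n)_{n\ge 0}$ into consecutive pairs and letting $g$ act by $\epsilon_{2k}\mapsto\epsilon_{2k+1}\mapsto 0$ while $b:=s-g$ acts by $\epsilon_{2k}\mapsto 0$ and $\epsilon_{2k+1}\mapsto\epsilon_{2k+2}$ gives $g^2=b^2=0$ and $g+b=s$.

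The main obstacle is the remaining case: choosing $g(\epsilon_0)$ so that the endomorphism $g$ produced by the recursion \emph{also} satisfies $g^2=a_1g$. A naive periodic ansatz — e.g.\ block-bidiagonal infinite matrices — does not suffice in general, so the construction has to be genuinely aperiodic; one would define $g(\epsilon_0)$ by an explicit formula or an auxiliary recursion and then verify $g^2=a_1g$ by induction. A more structured route uses the isomorphism $\F[t]=\F[t^2]\oplus t\,\F[t^2]$, under which $s$ becomes the operator $\left(\begin{smallmatrix}0&N\\ \id&0\end{smallmatrix}\right)$ on $\F[v]\oplus\F[v]$ (with $v=t^2$ and $N$ the shift on $\F[v]$): since $\left(\begin{smallmatrix}x&a_1x-x^2\\ \id&a_1\,\id-x\end{smallmatrix}\right)$ satisfies $g^2=a_1g$ for \emph{every} operator $x$ on $\F[v]$, the problem collapses to a single operator equation for $x$. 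Either way, carrying this out uniformly in $a_1$ and $a_2$, and checking that nothing breaks over a field of characteristic $2$ (where some of the usual reductions are unavailable), is where the real effort lies.
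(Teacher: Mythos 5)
The paper does not actually prove Theorem~\ref{theo2}: it imports it verbatim from \cite{dSPSum4} (Theorem~1 of that article), so there is no in-paper proof to compare against. Judged on its own terms, your proposal has a genuine gap, and one of the two fallback routes you sketch is in fact a dead end.

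Your reductions are all correct: pass to monic polynomials, normalize each $p_i$ to $t^2-a_it$ by shifting $u$ by a scalar (which preserves elementarity, as you observe), and since $V^u$ is free, decompose $V$ into $u$-stable cyclic pieces each isomorphic to $\F[t]$ with $u$ acting as the shift $s$; Remark~\ref{directsumremark} then reduces the whole problem to $(V,u)=(\F[t],s)$. The identity $b^2-a_2b=(s^2-a_2s)-\bigl(sg+gs-(a_1+a_2)g\bigr)$ for $b:=s-g$ when $g^2=a_1g$ is also correct, so you have correctly reduced to finding an operator $g$ with $g^2=a_1g$ and $sg+gs-(a_1+a_2)g=s^2-a_2s$. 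But you do not produce such a $g$: you defer the choice of the first column $g(\epsilon_0)$ and explicitly say that verifying $g^2=a_1g$ ``is where the real effort lies.'' That missing construction \emph{is} the theorem, so the proposal does not close it.

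Moreover, the $2\times 2$ block ansatz cannot close it. With $g=\left(\begin{smallmatrix}x & a_1x-x^2\\ \id & a_1\id-x\end{smallmatrix}\right)$ on $\F[v]\oplus\F[v]$, the operator $b=s-g$ is block upper-triangular, so $b^2=a_2b$ splits into three constraints on $x$: the $(1,1)$ and $(2,2)$ entries give $x^2+a_2x=0$ and $(x-a_1\id)^2=a_2(x-a_1\id)$, which combine to $2(a_1+a_2)x=a_1(a_1+a_2)\id$, while the $(1,2)$ entry gives $[N,x]=(a_1+a_2)\bigl(N-(a_1+a_2)x\bigr)$. When $a_1+a_2\neq 0$ and $\car\F\neq 2$, the diagonal constraints force $x$ to be a scalar, so $[N,x]=0$, yet the right-hand side of the commutator equation contains the nonzero summand $(a_1+a_2)N$. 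So this particular family of operators satisfying $g^2=a_1g$ is too rigid to solve the full system; the ``single operator equation'' does not admit a solution in general.

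Your first route, however, is closer to a complete proof than you realized, and can be finished with one extra observation. The linear constraint on $g$ already forces the defect $h:=g^2-a_1g$ to commute with $s$: repeatedly substituting $gs=s^2-a_2s+(a_1+a_2)g-sg$ into $hs=g^2s-a_1gs$ and collecting terms gives $hs=s\bigl(s^2-a_2s-(sg+gs)+a_2g+g^2\bigr)=s\bigl(g^2-a_1g\bigr)=sh$. Consequently, $h(\epsilon_0)=0$ implies $h(\epsilon_n)=s^n h(\epsilon_0)=0$ for all $n$, hence $h=0$. And the trivial choice $g(\epsilon_0)=0$ already makes $h(\epsilon_0)=g(g(\epsilon_0))-a_1g(\epsilon_0)=0$, with the recursion visibly producing finitely supported columns. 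This works uniformly in $a_1,a_2$ and in every characteristic. But absent this commutation argument, or some equivalent verification, the proposal leaves the core of the theorem unproved.
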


In \cite{dSPSum4}, the basic strategy then consisted in showing that, given split polynomials $p_3,p_4$
with degree $2$ over $\F$ and an endomorphism $u$ of $V$, there exist endomorphisms $u_3$ and $u_4$
of $V$ such that $u-u_3-u_4$ is elementary and $p_3(u_3)=p_4(u_4)=0$
(note that in \cite{Shitov}, Shitov proved the more powerful result that
$u$ is actually the sum of two elementary endomorphisms).
Here, the strategy will be to start from an endomorphism $u$ of $V$
and from a split polynomial $p$ with degree $2$ over $\F$, and to search for
an endomorphism $v$ of $V$ such that $u-v$ is elementary and $p(v)=0$.
A definition is relevant here:

\begin{Def}
Let $V$ be an $\F$-vector space, $u$ be an endomorphism of $V$, and $a$ be a scalar.
We say that $u$ is \textbf{$a$-elementarily decomposable} whenever there exists
an endomorphism $v$ of $V$ such that $v^2=a v$ and $u-v$ is elementary.
\end{Def}

Combining Theorem \ref{theo2} with Remark \ref{canonicalremark}, one sees that, in order to prove Theorem \ref{theo3}, it only remains to establish the following result:

\begin{theo}\label{alphaelementarilyTheo}
Let $u$ be an endomorphism of an infinite-dimensional vector space $V$, with no dominant eigenvalue.
Then, for any scalar $a$, the endomorphism $u$ is $a$-elementarily decomposable.
\end{theo}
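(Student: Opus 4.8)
The plan is to construct, for a given scalar $a$ and an endomorphism $u$ with no dominant eigenvalue, a square-zero-type endomorphism $v$ (satisfying $v^2=av$) such that $u-v$ is elementary, i.e.\ $V^{u-v}$ is a free $\F[t]$-module. The guiding idea is that freeness of $V^w$ is a torsion-freeness/divisibility condition: since $\F[t]$ is a PID and every submodule of a free module is free, it suffices to produce a decomposition $V=\bigoplus_{i\in I} C_i$ where each $C_i$ is stable under $w:=u-v$ and $C_i^{w}$ is free of rank one, i.e.\ each $C_i$ has a vector $e_i$ such that $(w^k(e_i))_{k\ge 0}$ is a basis of $C_i$. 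In other words, I want $w$ to be (up to a direct sum decomposition) a direct sum of copies of the ``shift'' operator on $\F[t]$. By Remark \ref{directsumremark} it is enough to carry out the construction separately on invariant summands, so the real work is local.

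First I would set up the combinatorial backbone. Pick a basis $(x_j)_{j\in J}$ of $V$. The strategy is to build a transfinite (or, when $\dim V$ is countable, ordinary) sequence of vectors $e_0,e_1,\dots$ together with the action of $v$ on them, so that $w=u-v$ shifts $e_n$ to $e_{n+1}$ and so that the $e_n$ together with the remaining basis vectors stay linearly independent, eventually exhausting $V$. Concretely, one maintains a growing free part $F_\alpha=\bigoplus \F[t]\,e_i^{(\alpha)}$ and a complement, and at each step either ``feeds'' a new basis vector $x_j$ into the construction (killing the obstruction to it lying in the span) or extends one of the free strands. The endomorphism $v$ is defined by $v(e_n):=u(e_n)-e_{n+1}$ on the generators of the strands and by a carefully chosen rule on a complement, arranged so that $v^2=av$; the simplest way to guarantee $v^2=av$ is to make $v$ act, in a suitable basis adapted to the strands, as a direct sum of $2\times 2$ blocks $\begin{pmatrix} 0 & 0\\ 1 & a\end{pmatrix}$ together with zero and $a\,\id$ blocks — the eigenvalue-$0$ and eigenvalue-$a$ structure of such a $v$ is exactly what the quadratic relation $t^2-at$ allows.

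The crucial point — and the step I expect to be the main obstacle — is maintaining \emph{freeness}, equivalently torsion-freeness, of $V^{u-v}$ throughout the construction: I must ensure that no nonzero polynomial in $u-v$ annihilates any of the generated vectors, and that the union of the strands is a direct sum inside $V$. This is where the hypothesis ``$u$ has no dominant eigenvalue'' is used: if for some scalar $\mu$ a torsion element with annihilator a power of $(t-\mu)$ were forced to appear, one could trace it back to $\rk(u-\mu\,\id_V)<\dim V$ (the correction by the finite-or-small-rank $v$ cannot change which $\mu$ are ``dominant''), contradicting the hypothesis. So the inductive invariant will be something like: ``at stage $\alpha$, the partial data extend to a $v$ with $v^2=av$ on the handled subspace, $w$-freeness holds there, and every not-yet-handled basis vector can be absorbed because $u-\mu\,\id_V$ is surjective for every candidate $\mu$.'' Verifying that surjectivity (or infinite corank) of the relevant $u-\mu\,\id_V$ gives enough ``room'' to both append basis vectors and keep the sum direct is the technical heart; it will likely require a bookkeeping argument matching cardinalities, treating the countable-dimensional and uncountable-dimensional cases with the same transfinite recursion but being careful that at limit stages the accumulated free module is still free (which is automatic) and still a direct summand with an absorbable complement.

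Once the recursion terminates with $V=\bigoplus_i C_i$, each $C_i^{w}$ free of rank one, we conclude that $V^{w}$ is free, hence $w=u-v$ is elementary with $v^2=av$, which is precisely $a$-elementary decomposability; combined with Theorem \ref{theo2} and Remark \ref{canonicalremark} this yields Theorem \ref{theo3}. I would organize the write-up by first proving a clean ``one-strand'' lemma (given a vector $x$ outside the current free part, extend the construction to absorb it), then iterating it by transfinite recursion, isolating the no-dominant-eigenvalue hypothesis entirely inside that lemma.
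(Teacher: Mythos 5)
Your plan captures the right overall shape: construct $v$ so that $w=u-v$ shifts generators along free rank-one strands, and make $v$ satisfy $v^2=av$ by having it act as $2\times 2$ blocks $\begin{pmatrix}0&0\\1&a\end{pmatrix}$ on adjacent pairs and as $0$ elsewhere. This is essentially what the paper's ``connector'' construction does once one has a \emph{good stratification} of $V^u$ (Proposition \ref{connectorprop} and Section \ref{goodstratsection}), and your transfinite recursion is a reasonable sketch of the paper's proof when $\dim V$ is \emph{uncountable}. But there is a genuine gap in the countable case, and the specific mechanism you rely on to keep the recursion alive is not available.

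The crux is your claimed invariant that ``every not-yet-handled basis vector can be absorbed because $u-\mu\,\id_V$ is surjective for every candidate $\mu$.'' Having no dominant eigenvalue says only that $\rk(u-\mu\,\id_V)=\dim V$ for all $\mu$; it does not imply surjectivity, and in the countable case it does not even imply that $\codim\,\im(u-\mu\,\id_V)$ is small. The paper's own Example \ref{exampleofnogoodstrat} is a counterexample to your reasoning: take $V^u=\F[t]\times\F[t]/(t)$, so $u$ is multiplication by $t$. Here $u$ has no dominant eigenvalue, yet $u$ is not surjective (the torsion generator $f=(0,1)$ is not in $\im u$), and the paper proves that $V^u$ admits \emph{no} good stratification. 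Your strand-building recursion, as described, would get stuck precisely at such an $f$: you cannot make $w=u-v$ send something onto $f$ by only modifying $u$ on the strands you have already built, and your proposed way of ruling out torsion (``trace it back to $\rk(u-\mu\,\id_V)<\dim V$; $v$ is small-rank so cannot change which $\mu$ are dominant'') is incorrect --- the $v$ you need can, and in the paper's construction does, have infinite rank.

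What is actually needed to handle the countable case is a separate argument. The paper splits it in two: (a) if $V^u$ is torsion, a good stratification exists after all (Proposition \ref{torsiongoodstrat}, proved by a careful induction using Lemmas \ref{index2goodstrat} and \ref{twosubmoduleslemma}); (b) if $V^u$ is not torsion, one first isolates a quasi-maximal free submodule and a scalar complement (Lemma \ref{nontorsiondecomp}), then invokes the \emph{sewing lemma} (Lemma \ref{sewinglemma}): an explicit recipe that, given a free monogenous strand $\F[t]x$ and a complementary summand on which $u$ is scalar, builds a $v$ with $v^2=av$ that ``sews'' the complement into the strand, producing a single free strand. The sewing lemma is the new idea your proposal lacks; it is not a consequence of a cardinality bookkeeping argument, and the construction (feeding one vector $f_n$ from the complement for every three steps along the strand) is nontrivial and must be verified by hand. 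So the recursion cannot be run ``with the same transfinite scheme'' in both the countable and uncountable cases; the paper deliberately separates them (Sections \ref{uncountablesection} and \ref{countablesection}) for exactly this reason.
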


In order to find a well-suited $v$, we shall use similar methods as in \cite{dSPSum4}: they involve  \emph{stratifications}
of the $\F[t]$-module $V^u$ and \emph{connectors}.
We will review the relevant definitions and results on them in Section \ref{stratsection}, and
we will also give new results that are needed here. The key notion is the one of a \emph{good stratification}, that is defined in Section \ref{goodstratsection}.

The article is laid out as follows. Section \ref{CNsection} essentially consists of a proof of
Theorem \ref{dominanteigenvalueCN} but also includes a technical lemma (the invariant subspace lemma)
that will be used in later sections, together with a characterization of the scalar multiples of the identity that
are $(p_1,p_2,p_3)$-sums. In Section \ref{finiteranksection}, we will discuss what should be done in general to tackle the case of the sum of a scalar multiple of the identity with a finite-rank endomorphism.
In Section \ref{sufficientdominantsection}, we will derive Theorem \ref{dominanteigenvalueCS}
from Theorem \ref{theo3} and from the characterization of the $(p_1,p_2,p_3)$-sums among the scalar multiples of the identity.

The rest of the article is mainly devoted to the proof of Theorem \ref{alphaelementarilyTheo}.
Section \ref{stratsection} consists of a discussion of stratifications and connectors.
In Section \ref{uncountablesection}, we will prove Theorem \ref{alphaelementarilyTheo} in the special case of a vector space with uncountable dimension. Section \ref{countablesection} deals with the difficult case of a vector space with countable dimension.
In the final section (Section \ref{specialcasessection}), we shall complete the study by proving Theorems \ref{3squarezero},
\ref{3idemcar2theo} and \ref{LC3}, with the help of recent results on the finite-dimensional case \cite{dSP3squarezero}.

\section{Necessary conditions}\label{CNsection}

\subsection{The case of scalar multiples of the identity}

\begin{prop}\label{homothetieprop}
Let $V$ be an infinite-dimensional vector space over $\F$, and $\lambda \in \F$.
Let $p_1,p_2,p_3$ be split polynomials with degree $2$ over $\F$.
Then, $\lambda\,\id_V$ is a $(p_1,p_2,p_3)$-sum if and only if
$\lambda$ is a $(p_1,p_2,p_3)$-sum or $2\lambda=\tr p_1+\tr p_2+\tr p_3$.
\end{prop}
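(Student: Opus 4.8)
The plan is to normalize the polynomials, settle the ``if'' direction by an explicit $2\times 2$ computation carried over a decomposition of $V$ into planes, and settle the ``only if'' direction by producing a common invariant subspace of dimension at most $2$ and then arguing by traces. First I would reduce via Remark \ref{canonicalremark} (applied to $u=\lambda\,\id_V$): one checks that the two conditions on the right-hand side of the statement transform compatibly under the shift used there, so it suffices to treat the case $p_k=t^2-a_kt$ for scalars $a_1,a_2,a_3$. In that setting $\tr p_k=a_k$, the roots of $p_k$ are $0$ and $a_k$, ``$\lambda$ is a $(p_1,p_2,p_3)$-sum'' means $\lambda=\varepsilon_1a_1+\varepsilon_2a_2+\varepsilon_3a_3$ for some $(\varepsilon_1,\varepsilon_2,\varepsilon_3)\in\{0,1\}^3$, and I must prove that $\lambda\,\id_V=u_1+u_2+u_3$ with $u_k^2=a_ku_k$ for all $k$ is solvable if and only if $\lambda$ has this form or $2\lambda=a_1+a_2+a_3$.

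For the ``if'' direction: if $\lambda=\varepsilon_1a_1+\varepsilon_2a_2+\varepsilon_3a_3$, then $u_k:=\varepsilon_ka_k\,\id_V$ works. If instead $2\lambda=a_1+a_2+a_3$, I would split $V$ into a direct sum of $2$-dimensional subspaces (possible since $\dim V$ is infinite, hence equal to twice a cardinal) and, by Remark \ref{directsumremark}, reduce to realizing $\lambda\,I_2$ as a sum of matrices $A_1,A_2,A_3\in\Mat_2(\F)$ with $A_k^2=a_kA_k$. Here one takes $A_1:=\begin{pmatrix}a_1&1\\0&0\end{pmatrix}$, $A_3:=\begin{pmatrix}0&0\\(\lambda-a_1)(\lambda-a_3)&a_3\end{pmatrix}$ and $A_2:=\lambda\,I_2-A_1-A_3$: then $A_1^2=a_1A_1$ and $A_3^2=a_3A_3$ are immediate, while $A_2$ has determinant $0$ and trace $2\lambda-a_1-a_3=a_2$, so $A_2^2=a_2A_2$ by Cayley--Hamilton.

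For the ``only if'' direction, suppose $\lambda\,\id_V=u_1+u_2+u_3$ with $u_k^2=a_ku_k$. Since $u_1+u_2=\lambda\,\id_V-u_3$ and $u_3^2=a_3u_3$, the endomorphism $u_1+u_2$ is annihilated by the split polynomial $(t-\lambda)(t-\lambda+a_3)$, hence has a nonzero eigenvector $\xi$, say with eigenvalue $\mu$. Then $u_2(\xi)=\mu\,\xi-u_1(\xi)$, so $W:=\F\,\xi+\F\,u_1(\xi)$ also equals $\F\,\xi+\F\,u_2(\xi)$; since $u_k^2(\xi)=a_ku_k(\xi)$, this subspace $W$ (of dimension $\le 2$) is stable under $u_1$ and $u_2$, hence under $u_3$. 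Now I would work inside the finite-dimensional $W$: if $u_1,u_2,u_3$ admit a common eigenvector in $W$, its three eigenvalues lie in $\{0,a_1\}$, $\{0,a_2\}$, $\{0,a_3\}$ and sum to $\lambda$, so $\lambda$ is a $(p_1,p_2,p_3)$-sum; otherwise $\dim W=2$ and no $u_k|_W$ is scalar (if some $u_j|_W$ were scalar, an eigenvector of one of the other two restrictions would be a common eigenvector), so each $u_k|_W$ is a non-scalar $2\times 2$ matrix annihilated by $t(t-a_k)$, hence of trace exactly $a_k$; taking traces in $u_1|_W+u_2|_W+u_3|_W=\lambda\,I_2$ then yields $2\lambda=a_1+a_2+a_3$.

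The delicate point is the construction of $W$ in the ``only if'' part: starting from an eigenvector of $u_1$ alone does not suffice, since the resulting plane need not be $u_1$-stable; the fix is to use an eigenvector of a pair-sum such as $u_1+u_2$, which exists precisely because $\lambda\,\id_V-(u_1+u_2)=u_3$ has a split quadratic annihilator. I expect this to be the step needing the most care, whereas the explicit $2\times 2$ identity powering the ``if'' direction, though it looks ad hoc, is routine to verify.
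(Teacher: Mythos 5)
Your proof is correct. The ``if'' direction is essentially the same as the paper's: after normalization, realize $\lambda I_2$ as a sum of annihilated $2\times 2$ matrices (yours are a different but equally valid explicit choice — I checked that $A_1^2=a_1A_1$, $A_3^2=a_3A_3$, and $A_2$ has trace $a_2$ and determinant $0$, so Cayley--Hamilton applies), then tile $V$ by planes and invoke Remark~\ref{directsumremark}. One small point: you invoke Remark~\ref{canonicalremark} rather than Remark~\ref{monicremark}; this is fine, and your observation that both right-hand conditions transform compatibly under the shift $\lambda \mapsto \lambda - (x_1+x_2+x_3)$ is correct and needed.

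The ``only if'' direction, however, is genuinely different from the paper's and arguably more elementary. The paper shows that when $2\lambda \neq \tr p_1 + \tr p_2 + \tr p_3$, the three summands pairwise commute (via the identity $(b+c)\bigl((\beta+\gamma)\id-b-c\bigr)$, borrowed from an earlier paper), and then pulls out a common eigenvector of the commuting triple. You instead produce directly a common invariant subspace $W=\F\xi+\F u_1(\xi)$ of dimension at most $2$, starting from an eigenvector $\xi$ of $u_1+u_2=\lambda\id_V-u_3$ (which exists because $u_1+u_2$ has a split quadratic annihilating polynomial), verify stability of $W$ under $u_1,u_2$ (hence $u_3$), and then split into two cases inside $W$: a common eigenvector yields ``$\lambda$ is a sum'', and no common eigenvector forces all three restrictions to be non-scalar with trace $a_k$, whence $2\lambda=a_1+a_2+a_3$ by taking traces on $W$. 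Both arguments are correct; yours is self-contained and avoids the commutation trick, but treats the two outcomes as a dichotomy rather than deriving the first from the negation of the second. This buys a more elementary, purely finite-dimensional reduction; the paper's commutation lemma is slicker and reusable in other contexts (it is indeed reused elsewhere in that literature).

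Minor wording caveats (not errors): in your non-scalar case you should make explicit that a non-scalar $2\times 2$ matrix annihilated by $t(t-a_k)$ has minimal polynomial equal to $t(t-a_k)$, hence characteristic polynomial $t^2-a_kt$, hence trace exactly $a_k$; and in the common-eigenvector case you should note that the eigenvalue triple sums to $\lambda$ because the three summands sum to $\lambda\id_V$. Both of these you clearly have in mind, but spelling them out would make the argument airtight.
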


\begin{proof}
By Remark \ref{monicremark}, we lose no generality in assuming that $p_1,p_2,p_3$ are all monic,
and we shall denote their respective traces by $\alpha,\beta,\gamma$.

We start with the converse implication. If $\lambda$ is a $(p_1,p_2,p_3)$-sum, we split it up into
$\lambda=x_1+x_2+x_3$ where $p_i(x_i)=0$ for all $i \in \lcro 1,3\rcro$ and it is then obvious
from writing $\lambda\,\id_V=x_1\,\id_V+x_2\,\id_V+x_3\,\id_V$ that $\lambda\,\id_V$ is a $(p_1,p_2,p_3)$-sum.
Assume now that $2\lambda=\alpha+\beta+\gamma$. Write $p_1(t)=(t-x)(t-y)$ with $(x,y)\in \F^2$.
It is obvious that we can find scalars $\mu$ and $\nu$ such that the matrices
$B:=\begin{bmatrix}
0 & \mu \\
1 & \beta
\end{bmatrix}$ and $C:=\begin{bmatrix}
\lambda-x & \nu \\
-1 & \gamma+x-\lambda
\end{bmatrix}$ are respectively annihilated by $p_2$ and $p_3$
(one simply chooses $\mu$ and $\nu$ such that the determinants of those matrices are, respectively, $p_2(0)$ and $p_3(0)$,
and one concludes thanks to the Cayley-Hamilton theorem).
Then, since $2\lambda=x+y+\beta+\gamma$, we have
$$\lambda\,I_2-B-C=\begin{bmatrix}
x & -\mu-\nu \\
0 & y
\end{bmatrix}.$$
Once more, the Cayley-Hamilton theorem yields that $A:=\lambda\,I_2-B-C$ is annihilated by $p_1$.
Hence, $\lambda\,I_2$ is a $(p_1,p_2,p_3)$-sum. It follows that, for every $2$-dimensional vector space $P$,
the endomorphism $\lambda\,\id_P$ is a $(p_1,p_2,p_3)$-sum.

Now, since $V$ is infinite-dimensional we can split it up as $V=\underset{i \in I}{\bigoplus} P_i$
where each $P_i$ is a $2$-dimensional vector space. Then, $\lambda\,\id_{P_i}$ is a $(p_1,p_2,p_3)$-sum for all $i \in I$, and
by Remark \ref{directsumremark} we conclude that $\lambda\,\id_V$ is a $(p_1,p_2,p_3)$-sum.

\vskip 3mm
We now turn to the direct implication. Assume that $\lambda\,\id_V=a+b+c$ for some triple $(a,b,c)\in \End(V)^3$
such that $p_1(a)=0$, $p_2(b)=0$ and $p_3(c)=0$. Assume also that $2\lambda \neq \alpha+\beta+\gamma$.
Then, we shall prove that $\lambda$ is a $(p_1,p_2,p_3)$-sum.
Note first that $b$ and $c$ commute with $u:=(b+c)\bigl((\beta+\gamma)\,\id_V-b-c\bigr)$ (see Lemma 3 of \cite{dSPsumoftwotriang}).
Indeed, by expanding we get that $u=\gamma b +\beta c-bc-cb+\delta \id_V$ for some $\delta \in \F$,
and one checks that $b$ commutes with $\beta c-bc-cb$: indeed,
$b(\beta c-bc-cb)=\beta bc-b^2 c-bcb=p_2(0)c-bcb$ and likewise $(\beta c-bc-cb)b=p_2(0)c-bcb$; it follows that
$b$ commutes with $u$, and likewise $c$ commutes with $u$.

Next, we use $b+c=\lambda\,\id_V-a$ to obtain
$$u=(\lambda\,\id_V-a)\bigl((\beta+\gamma-\lambda)\,\id_V+a\bigr).$$
By expanding and using $a^2 \in \alpha\,a+\F \id_V$, we get that
$$u=(2\lambda-\alpha-\beta-\gamma)\,a+\delta' \,\id_V$$
for some $\delta' \in \F$. Since $2\lambda-\alpha-\beta-\gamma \neq 0$, we deduce that $b$ and $c$ both commute with $a$.

Symmetrically, we obtain that $b$ commutes with $c$.
Now, classically since $a,b,c$ are pairwise commuting endomorphisms of a non-zero vector space that are annihilated by split polynomials,
they have a common eigenvector; denoting by $x,y,z$ the corresponding eigenvalue for $a,b,c$, respectively,
we deduce that $\lambda=x+y+z$, which shows that $\lambda$ is a $(p_1,p_2,p_3)$-sum.
\end{proof}

\subsection{The invariant subspace lemma}

\begin{lemma}[Invariant subspace lemma]\label{invariantsubspacelemma}
Let $V$ be an infinite-dimensional vector space over $\F$.
Let $a,b,c$ be quadratic endomorphisms of $V$. Let $\lambda \in \F$ and $w \in \End(V)$ be such that:
\begin{enumerate}[(i)]
\item $\lambda\,\id_V+w=a+b+c$;
\item $\rk w <\dim V$.
\end{enumerate}
Let $W$ be a linear subspace of $V$ that includes $\im w$ and such that $\dim W<\dim V$.
Then, there exists a linear subspace $\overline{W}$ of $V$ such that $\dim \overline{W}<\dim V$,
$\overline{W}$ includes $W$ and is stable under $a$, $b$ and $c$.
Moreover if $W$ is finite-dimensional then $\overline{W}$ can be chosen finite-dimensional.
\end{lemma}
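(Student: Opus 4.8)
The natural approach is a straightforward "saturation" argument: start with $W$ and repeatedly enlarge it by applying $a$, $b$, $c$ until it becomes stable, controlling the dimension at each stage. Concretely, I would set $W_0 := W$ and, inductively, $W_{n+1} := W_n + a(W_n) + b(W_n) + c(W_n)$, then take $\overline{W} := \bigcup_{n \geq 0} W_n$. By construction $\overline{W}$ contains $W$ and is stable under $a$, $b$, $c$. The content of the lemma is the dimension bound $\dim \overline{W} < \dim V$, for which the key observation is that each of $a,b,c$ is quadratic: if $p(t) = t^2 - \sigma t - \tau$ annihilates $a$, then $a(a(x)) = \sigma a(x) + \tau x$, so $a(W_n + a(W_n)) \subseteq W_n + a(W_n)$; in other words, after a single application, adding further images of $a$ produces nothing new. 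Thus $\dim W_{n+1} \leq 4 \dim W_n$ (crudely), and more importantly $\dim W_n$ stays bounded by $\max(\aleph_0, \dim W)$ for each finite $n$, whence $\dim \overline{W} = \dim \bigl(\bigcup_n W_n\bigr) \leq \aleph_0 \cdot \max(\aleph_0, \dim W) = \max(\aleph_0, \dim W) < \dim V$, using that $\dim W < \dim V$ and $V$ is infinite-dimensional. If $W$ is finite-dimensional, each $W_n$ is finite-dimensional, but the union need not be; so for the "moreover" part I would instead iterate the one-step saturation under the subalgebra generated by $a$, $b$, $c$ more carefully.

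Let me reorganize to handle both assertions uniformly. Since $a$, $b$, $c$ are each quadratic, the unital subalgebra $\calA \subseteq \End(V)$ they generate is spanned, as an $\F$-vector space, by words in $a,b,c$ in which no letter appears twice in a row (using $a^2 \in \F a + \F\,\id_V$ etc.); hence $\calA$ has dimension at most countable — in fact one can bound it by the number of such reduced words, which is countable. Now take $\overline{W} := \calA \cdot W$, the $\F$-span of $\{ \varphi(x) : \varphi \in \calA,\ x \in W\}$. This is visibly stable under $a$, $b$, $c$ (indeed under all of $\calA$) and contains $W$. For the dimension: picking a basis $(\varphi_i)_{i \in I}$ of $\calA$ with $\card I \leq \aleph_0$ and a basis $(x_j)_{j \in J}$ of $W$ with $\card J = \dim W$, the vectors $\varphi_i(x_j)$ span $\overline{W}$, so $\dim \overline{W} \leq \card(I \times J) = \max(\aleph_0, \dim W)$. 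When $\dim V$ is uncountable this is already less than $\dim V$; when $\dim V = \aleph_0$ we need $\dim W$ finite, and then this bound gives only $\aleph_0$, which is not enough — so the countable-dimensional case genuinely requires a separate idea for the strict inequality, unless we exploit hypothesis (i).

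Here is where hypothesis (i) enters, and this is the step I expect to be the crux. The subalgebra $\calA$ is not arbitrary: from $a + b + c = \lambda\,\id_V + w$ we get $c = \lambda\,\id_V + w - a - b$, so $\calA$ is generated by $a$, $b$ and $w$ together with $\id_V$. Since $w$ has rank $< \dim V$, one can hope to run the saturation using only $a$ and $b$ freely and treating $w$ as a "bounded perturbation": more precisely, let $\calB$ be the (countable-dimensional, by the reduced-word argument) subalgebra generated by $a$ and $b$, set $W' := \calB\cdot(W + \im w)$, which has dimension $\leq \max(\aleph_0, \dim W, \rk w) < \dim V$ and is stable under $a$ and $b$; it is then stable under $c = \lambda\,\id_V + w - a - b$ precisely because $w(W') \subseteq \im w \subseteq W'$. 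This gives $\overline{W} := W'$ with $\dim \overline{W} < \dim V$ in all cases, and if $W$ is finite-dimensional and $\rk w$ is finite — which we may arrange by first replacing $w$ by its restriction data, or rather we simply note $\im w \subseteq W$ is already assumed — then $W + \im w = W$ is finite-dimensional; but $\calB$ is only countable-dimensional, so $W'$ is a priori countable-dimensional, not finite. The genuine obstacle is therefore the finite-dimensional refinement: for that I would not take the full subalgebra but iterate the one-step operation $X \mapsto X + a(X) + b(X)$ and observe, via the quadratic relations, that it stabilizes after finitely many steps on any finite-dimensional starting space — indeed $X + a(X) + b(X) + a(b(X)) + b(a(X)) + \cdots$ built from reduced words of bounded length already closes up once we have included words of length up to the point where the relations $a^2, b^2 \in \F\text{-span}$ prevent growth, but since reduced words in two letters can be arbitrarily long this does \emph{not} terminate in general, so the finite-dimensional claim must instead use that a finitely generated module over a finite-dimensional-enough algebra is finite-dimensional, i.e.\ we must show the algebra generated by $a,b$ acts on the relevant finite-dimensional piece through a finite-dimensional quotient. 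I would establish this by passing to the finite-dimensional subspace $W$ itself: since $\im w \subseteq W$, consider whether $a,b,c$ can be arranged to already stabilize a finite-dimensional space containing $W$ — and here the right move is to use that $a, b, c$ quadratic means their minimal polynomials have degree $\leq 2$, so on any $a$-stable, $b$-stable finitely generated invariant structure the dimension is controlled; concretely one builds $\overline{W}$ as the smallest $\{a,b\}$-invariant subspace containing $W$ and argues it is finite-dimensional by a Fitting-type / Noetherian argument on $V^a$ and $V^b$ simultaneously, which is the technical heart of the proof.
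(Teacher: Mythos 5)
Your proposal correctly identifies the crux — the case $\dim V = \aleph_0$ and $W$ finite-dimensional (equivalently, the ``moreover'' clause) — but never actually resolves it, so the proof has a genuine gap. Your approaches (1) and (2), taking the union of iterates $W_n$ or applying the full unital subalgebra $\calA$ or $\calB$ generated by $a,b,c$ resp.\ $a,b$, are fine when $\dim V$ is uncountable, but as you note they only bound $\dim\overline{W}$ by $\aleph_0\cdot\dim W$, which is not $<\dim V$ when $\dim V=\aleph_0$. Your approach (3) is not a proof: you correctly observe that the free-product algebra $\F[a]/(p_1)\ast\F[b]/(p_2)$ is infinite-dimensional (reduced words $ababa\cdots$ of arbitrary length), so ``a finitely generated module over a finite-dimensional-enough algebra'' is unavailable, and the appeal to a ``Fitting-type / Noetherian argument'' is a placeholder, not an argument.

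The missing idea is that hypothesis (i), used \emph{quadratically}, forces the saturation to close after words of length exactly two. The paper sets
$$\overline{W}:=W+a(W)+b(W)+c(W)+\sum_{(e,f)\in\{a,b,c\}^2}(ef)(W),$$
a sum of thirteen subspaces each of dimension $\le\dim W$, so $\overline{W}$ is finite-dimensional when $W$ is and satisfies $\dim\overline{W}<\dim V$ in general. To see that $\overline{W}$ is $\{a,b,c\}$-stable one must show $(efg)(W)\subseteq\overline{W}$ for all $e,f,g\in\{a,b,c\}$. When $e=f$ or $f=g$ the quadratic relations reduce the word to length $\le 2$. For the genuinely mixed words, the key identity is
$$ab+ba=(a+b)^2-a^2-b^2=(\lambda\,\id_V+w-c)^2-a^2-b^2,$$
which, after expansion and using $\im w\subseteq W$ together with the quadratic relations for $a,b,c$, gives $(ab+ba)(W)\subseteq W+a(W)+b(W)+c(W)$. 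This collapses $aba=a(ab+ba)-a^2b$ and then $cba=(\lambda\,\id_V+w-a-b)(ba)$ into $\overline{W}$. In other words, (i) is not merely used to eliminate $c$ as you attempted; its square yields a relation expressing the \emph{symmetrized} length-two word $ab+ba$ in terms of length-$\le 1$ words modulo $W$, and this is precisely what makes the closure terminate in finitely many steps. Without exploiting that identity, no version of your iteration or subalgebra argument can give finite-dimensionality, because the unconstrained algebra generated by two quadratics really is infinite-dimensional.
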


\begin{proof}
We set
$$\overline{W}:=W+a(W)+b(W)+c(W)+\sum_{(e,f)\in \{a,b,c\}^2} (ef)(W).$$
Each vector space in this sum has dimension less than or equal to $\dim W$.
Hence, $\dim \overline{W}<\dim V$ because $V$ is infinite-dimensional. Moreover if $W$ is finite-dimensional then so is $\overline{W}$.

Since $\overline{W}$ includes $W$, it only remains to show that $\overline{W}$ is stable under $a,b,c$.
Obviously, it suffices to show that $(efg)(W) \subset \overline{W}$ for all $e,f,g$ in $\{a,b,c\}$.
Note first that if $e=f$, then $efg \in \Vect(eg,g)$ since $e$ is quadratic, whence
$(efg)(W) \subset \overline{W}$. Likewise, this inclusion also holds if $f=g$.

Next,
\begin{align*}
ab+ba & =(a+b)^2-a^2-b^2=(\lambda\,\id_V+w-c)^2-a^2-b^2 \\
& =\lambda^2\,\id_V+2\lambda w+w^2-2\lambda c-wc-cw+c^2-a^2-b^2.
\end{align*}
Since $a,b,c$ are all quadratic and $W$ includes $\im w$, it follows that
$$(ab+ba)(W) \subset W+a(W)+b(W)+c(W).$$
Next, $aba=a(ab+ba)-a^2b \in \Vect(a(ab+ba),ab,b)$, and we deduce from this equality and from the previous inclusion that
$$(aba)(W) \subset a(W)+a^2(W)+(ab)(W)+(ac)(W)+(ab)(W)+b(W).$$
Since $a^2$ is quadratic, $a^2(W) \subset a(W)+W$, and hence
$$(aba)(W) \subset \overline{W}.$$
More generally, we obtain $(efe)(W) \subset \overline{W}$ for all $(e,f) \in \{a,b,c\}^2$ (the case when $e=f$ has already been dealt with).
Finally,
\begin{align*}
cba=(\lambda \id_V+w-a-b)(ba)& =\lambda (ba)+w(ba)-(aba)-b^2 a \\
& \in \Vect(ba,w(ba),aba,b^2a).
\end{align*}
We have already seen that the image of $W$ under each endomorphism $ba,w(ba),aba,b^2a$ is included in $\overline{W}$,
whence $(cba)(W) \subset \overline{W}$.
Hence, symmetrically $(efg)(W) \subset \overline{W}$ for all distinct $e,f,g$ in $\{a,b,c\}$.
We conclude that $\overline{W}$ is stable under $a$, $b$ and $c$, which completes the proof.
\end{proof}

\subsection{Proof of Theorem \ref{dominanteigenvalueCN}}

Here, we derive Theorem \ref{dominanteigenvalueCN} from the preceding two results.
Let $p_1,p_2,p_3$ be split polynomials of $\F[t]$ with degree $2$,
and let $u$ be an endomorphism of an infinite-dimensional vector space $V$.
Assume that $u$ has a dominant eigenvalue $\lambda$ and that there exist endomorphisms $a,b,c$ of $V$
such that $u=a+b+c$ and $p_1(a)=p_2(b)=p_3(c)=0$.

Set $w:=u-\lambda\,\id_V$.
By Lemma \ref{invariantsubspacelemma} applied to $W:=\im w$, there exists a linear subspace $\overline{W}$ of
$V$ that includes $\im w$, is stable under $a$, $b$ and $c$, and whose dimension is less than the one of $V$.
It follows that $a,b,c$ induce endomorphisms of the infinite-dimensional quotient space $\overline{V}:=V/\overline{W}$ whose
sum equals $\lambda\id_{\overline{V}}$.
By Proposition \ref{homothetieprop}, we deduce that $\lambda$ is a $(p_1,p_2,p_3)$-sum or
$2\lambda=\tr p_1+\tr p_2+\tr p_3$. This completes the proof of Theorem \ref{dominanteigenvalueCN}.

\section{The case of the sum of a scalar multiple of the identity with a finite-rank endomorphism}\label{finiteranksection}

In this section, we shall give a partial result to the problem of determining when
an endomorphism of the form $\lambda \id_V+w$, where $\lambda$ is a scalar and $w$ is a finite-rank endomorphism,
is a $(p_1,p_2,p_3)$-sum.

The following definition is relevant to this problem:

\begin{Def}
Let $A$ be an $n$-by-$n$ matrix with entries in $\F$, and let $\lambda \in \F$.
Let $p_1,p_2,p_3$ be split polynomials with degree $2$ over $\F$.
Let $\lambda \in \F$ be such that $2\lambda=\tr p_1+\tr p_2+\tr p_3$ or $\lambda$ is a $(p_1,p_2,p_3)$-sum.
We say that $A$ is \textbf{a $(p_1,p_2,p_3)$-sum $\lambda$-stably} if
there exists a non-negative integer $q$ such that the block-diagonal matrix $(A+\lambda I_n) \oplus \lambda I_q$ is a $(p_1,p_2,p_3)$-sum.
\end{Def}

Next, to any finite-rank endomorphism $w$ of $V$ can be attached a similarity class of square matrices as follows:
we choose a minimal (finite-dimensional) linear subspace $W$ of $V$ such that $\im w \subset W$ and $W+\Ker w=V$.
The dimension of $W$ does not depend on the specific choice of $W$ and we denote it by $n(w)$.
Then, the similarity class of matrices that is attached to the induced endomorphism $w_{|W}$ does not depend on the choice
of $W$ either. We denote this similarity class by $[w]$. Moreover, if $W'$ is an arbitrary finite-dimensional linear subspace of
$V$ such that $\im w \subset W'$ and $W'+\Ker w=V$, any matrix that represents $w_{|W'}$
has the form $M \oplus 0_q$ for some $M \in [w]$ and some non-negative integer $q$.

Here is our partial result:

\begin{theo}\label{theofiniterank}
Let $V$ be an infinite-dimensional vector space,
$w$ be a finite-rank endomorphism of $V$ and $\lambda$ be a scalar.
Let $p_1,p_2,p_3$ be split polynomials with degree $2$ over $\F$.
Choose a matrix $A$ in $[w]$.
Then, the following conditions are equivalent:
\begin{enumerate}[(i)]
\item The endomorphism $\lambda \id_V+w$ is a $(p_1,p_2,p_3)$-sum.
\item The matrix $A$ is a $(p_1,p_2,p_3)$-sum $\lambda$-stably, and
either $\lambda$ is a $(p_1,p_2,p_3)$-sum or $2\lambda=\tr p_1+\tr p_2+\tr p_3$.
\end{enumerate}
\end{theo}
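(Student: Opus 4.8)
The plan is to prove the two implications separately; the notion of being a $(p_1,p_2,p_3)$-sum $\lambda$-stably is tailored precisely so that the invariant subspace lemma (Lemma~\ref{invariantsubspacelemma}) and Proposition~\ref{homothetieprop} combine to do the job, the auxiliary block $\lambda I_q$ accounting for the freedom the invariant subspace lemma leaves. For the implication (ii) $\Rightarrow$ (i), I would fix a minimal finite-dimensional subspace $W$ with $\im w\subset W$ and $W+\Ker w=V$, and choose a complement $U$ of $W$ contained in $\Ker w$ (take $U$ complementary to $W\cap\Ker w$ inside $\Ker w$). Then $V=W\oplus U$, the space $U$ is infinite-dimensional, $w$ vanishes on $U$, and after a suitable change of basis of $W$ the endomorphism $w_{|W}$ is represented by $A$. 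Splitting $U=U_1\oplus U_2$ with $\dim U_1=q$ (the integer provided by hypothesis (ii)) and $U_2$ infinite-dimensional, the restriction of $\lambda\,\id_V+w$ to $W\oplus U_1$ is represented by $(A+\lambda I_n)\oplus\lambda I_q$, hence is a $(p_1,p_2,p_3)$-sum by hypothesis, while its restriction to $U_2$ equals $\lambda\,\id_{U_2}$, which is a $(p_1,p_2,p_3)$-sum by Proposition~\ref{homothetieprop} since $\lambda$ is a $(p_1,p_2,p_3)$-sum or $2\lambda=\tr p_1+\tr p_2+\tr p_3$. Remark~\ref{directsumremark}, applied to $V=(W\oplus U_1)\oplus U_2$, then yields that $\lambda\,\id_V+w$ is a $(p_1,p_2,p_3)$-sum.

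For the implication (i) $\Rightarrow$ (ii), write $\lambda\,\id_V+w=a+b+c$ with $p_1(a)=p_2(b)=p_3(c)=0$. Since $w$ has finite rank, $\lambda$ is a dominant eigenvalue of $\lambda\,\id_V+w$, so Theorem~\ref{dominanteigenvalueCN} shows that $\lambda$ is a $(p_1,p_2,p_3)$-sum or $2\lambda=\tr p_1+\tr p_2+\tr p_3$, which is part of condition (ii). For the remaining part, I would apply the invariant subspace lemma to the minimal (finite-dimensional) subspace $W$ above: it produces a finite-dimensional subspace $\overline{W}\supseteq W$ stable under $a$, $b$ and $c$, hence under $\lambda\,\id_V+w$ and under $w$. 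Since $\im w\subset W\subseteq\overline{W}$ and $\overline{W}+\Ker w\supseteq W+\Ker w=V$, the remarks preceding the statement show that $w_{|\overline{W}}$ is represented, in a suitable basis of $\overline{W}$, by $A\oplus 0_q$ for some $q\geq 0$, so that $\lambda\,\id_V+w$ induces on $\overline{W}$ an endomorphism whose matrix is $(A+\lambda I_n)\oplus\lambda I_q$. As $a$, $b$ and $c$ induce on $\overline{W}$ endomorphisms $\bar a,\bar b,\bar c$ summing to that induced endomorphism and satisfying $p_1(\bar a)=p_2(\bar b)=p_3(\bar c)=0$, we conclude that $(A+\lambda I_n)\oplus\lambda I_q$ is a $(p_1,p_2,p_3)$-sum, i.e.\ $A$ is a $(p_1,p_2,p_3)$-sum $\lambda$-stably.

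Once the invariant subspace lemma is available, the argument is mostly bookkeeping and I do not expect a serious obstacle. The one delicate point — and the reason the definition of ``$(p_1,p_2,p_3)$-sum $\lambda$-stably'' is phrased with an auxiliary integer $q$ — is that the finite-dimensional invariant subspace $\overline{W}$ produced in the implication (i) $\Rightarrow$ (ii) is a priori strictly larger than $W$, so the restriction of $w$ to $\overline{W}$ realizes $A$ only up to adjoining a zero block (equivalently, the restriction of $\lambda\,\id_V+w$ realizes $A+\lambda I_n$ only up to adjoining a block $\lambda I_q$). Feeding the minimal subspace $W$, rather than merely $\im w$, into the invariant subspace lemma is exactly what ensures that $A$ itself, and not just some proper sub-block of it, is recovered on $\overline{W}$.
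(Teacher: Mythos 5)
Your proof is correct and follows essentially the same approach as the paper: (ii) $\Rightarrow$ (i) via splitting $V$ into $W\oplus U_1\oplus U_2$ and invoking Proposition~\ref{homothetieprop} on the $\lambda\,\id$ piece, and (i) $\Rightarrow$ (ii) via Theorem~\ref{dominanteigenvalueCN} together with the invariant subspace lemma. The one cosmetic difference is that the paper feeds $W:=\im w+G$ (with $G$ a complement of $\Ker w$) into the invariant subspace lemma rather than the minimal $W$, but this makes no difference; note also that what matters is not minimality of $W$ but the property $W+\Ker w=V$, which both choices ensure.
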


\begin{proof}
Set $u:=\lambda \id_V+w$.

Assume first that condition (ii) holds.
Choose a non-negative integer $q$ such that
$(A +\lambda I_{n(w)}) \oplus \lambda I_q$ is a $(p_1,p_2,p_3)$-sum.
We have a finite-dimensional linear subspace $W$ of $V$ such that $\im w \subset W$, $W+\Ker w=V$,
and $A$ represents the endomorphism of $W$ induced by $w$.
Then, we can split $V=W \oplus W'$ where $W' \subset \Ker w$.
We can further split $W'=W'_1 \oplus W'_2$ so that $W'_1$ has dimension $q$.
Hence, $V=W \oplus W'_1 \oplus W'_2$ and $W'_2$ is infinite-dimensional.
Since $(A \oplus \lambda I_{n(w)}) \oplus \lambda I_q$ is a $(p_1,p_2,p_3)$-sum,
the endomorphism $u_{|W \oplus W'_1}$ is a $(p_1,p_2,p_3)$-sum.
Moreover, by Proposition \ref{homothetieprop}, the
endomorphism $\lambda \id_{W'_2}$ of $W'_2$ is a $(p_1,p_2,p_3)$-sum.
Since $u_{|W'_2}=\lambda \id_{W'_2}$, we deduce from Remark \ref{directsumremark} that
$u$ is a $(p_1,p_2,p_3)$-sum.

Conversely, assume that condition (i) holds.
By Theorem \ref{dominanteigenvalueCN}, we already know that $\lambda$ is a $(p_1,p_2,p_3)$-sum or
$2\lambda=\tr p_1+\tr p_2+\tr p_3$.

Let $a,b,c$ be endomorphisms of $V$ such that
$u=a+b+c$ and $p_1(a)=p_2(b)=p_3(c)=0$.
Choosing a complementary subspace $G$ of $\Ker w$ in $V$ and applying the invariant subspace lemma to the finite-dimensional subspace
$W:=\im w+G$, we obtain a finite-dimensional linear subspace $W'$
of $V$ that is stable under $a,b,c$, includes $\im w$ and satisfies $W'+\Ker w=V$.
Then, we can split $V=W' \oplus V'$ such that $V' \subset \Ker w$.
Choosing a matrix $A$ in $[w]$, it follows that, for some non-negative integer $q$, the matrix
$(A+\lambda I_{n(w)}) \oplus (\lambda I_q)$ represents the endomorphism of $W'$ induced by $u$.
Since $a,b,c$ stabilize $W'$, this endomorphism turns out to be a $(p_1,p_2,p_3)$-sum, whence $(A+\lambda I_{n(w)}) \oplus (\lambda I_q)$
is a $(p_1,p_2,p_3)$-sum, and we conclude that $A$ is a $(p_1,p_2,p_3)$-sum $\lambda$-stably.
\end{proof}

Hence, in order to detect the $(p_1,p_2,p_3)$-sums among the
endomorphisms of type $\lambda \id_V+w$, with $w$ of finite rank, it remains to understand
which square matrices over $\F$ are $(p_1,p_2,p_3)$-sums $\lambda$-stably.
For general values of $p_1,p_2,p_3$, the latter problem is open, and probably intractable. For specific values of $(p_1,p_2,p_3)$, the
recent \cite{dSP3squarezero} provides some answers.

\section{Deriving Theorem \ref{dominanteigenvalueCS} from Theorem \ref{theo3}}\label{sufficientdominantsection}

\begin{lemma}[The reduction lemma]\label{reductionlemma}
Let $u \in \End(V)$, where $V$ is an infinite-dimensional vector space over $\F$.
Assume that $u$ has a dominant eigenvalue $\lambda$ and that $u-\lambda \id_V$ has infinite rank.
Then, there exists a decomposition $V=V_1 \oplus V_2$
into linear subspaces that are stable under $u$ and such that:
\begin{enumerate}[(i)]
\item $V_1$ is infinite-dimensional.
\item $u_{|V_1}$ has no dominant eigenvalue.
\item $u(x)=\lambda x$ for all $x \in V_2$.
\end{enumerate}
\end{lemma}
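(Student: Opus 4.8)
\emph{The plan} is to reduce to the case $\lambda=0_\F$ and then to write down $V_1$ and $V_2$ explicitly.

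\emph{Reduction.} I would replace $u$ by $v:=u-\lambda\,\id_V$: a subspace is stable under $u$ exactly when it is stable under $v$, the requirement ``$u$ acts as $\lambda\,\id$'' becomes ``$v$ acts as $0$'', ``$u_{|V_1}$ has no dominant eigenvalue'' is equivalent to ``$v_{|V_1}$ has no dominant eigenvalue'', and $v$ still has $0$ as a dominant eigenvalue with $\rk v=\rk(u-\lambda\,\id_V)$ infinite. So from now on I assume that $0$ is a dominant eigenvalue of $u$ and that $r:=\rk u$ is infinite; by the rank--nullity theorem and the infiniteness of $\dim V$, dominance forces $\dim\Ker u=\dim V=:\kappa$, with $\kappa>r$.

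\emph{Construction.} Set $W:=\im u$ and $K:=\Ker u$, so $\dim W=r$ and $\dim K=\kappa>r$. I pick a complement $G$ of $K$ in $V$; then $u$ restricts to an isomorphism from $G$ onto $W$, so $\dim G=r$, and $P:=W+G$ has dimension $r$, contains $W$ and $G$, satisfies $u(P)=W$, and is invariant under $u-\mu\,\id_V$ for every $\mu\in\F$ (since $(u-\mu\,\id_V)(W)\subseteq W$ and $(u-\mu\,\id_V)(G)\subseteq u(G)+G=P$). As $\dim(P\cap K)\le r<\kappa=\dim K$, I choose a subspace $S\subseteq K$ with $\dim S=r$ and $S\cap P=\{0\}$ (an $r$-dimensional subspace of a complement of $P\cap K$ in $K$). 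I then take $V_1:=P\oplus S$ and let $V_2$ be a complement of $V_1\cap K$ in $K$.

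\emph{Verification.} Since $V_1\supseteq W=\im u$, one has $u(V_1)\subseteq V_1$, so $V_1$ is $u$-stable, while $V_2\subseteq\Ker u$ is $u$-stable with $u$ acting as $0$ on it. For the decomposition: $V_2\subseteq K$ and $V_2\cap(V_1\cap K)=\{0\}$ give $V_1\cap V_2=\{0\}$; and $V=G\oplus K=G\oplus(V_1\cap K)\oplus V_2$ with $G,V_1\cap K\subseteq V_1$ gives $V_1+V_2=V$. Also $\dim V_1=\dim P+\dim S=r$, which is infinite. Finally, $u_{|V_1}$ has no dominant eigenvalue: for $\mu=0$, $\rk(u_{|V_1})=\dim u(P)=\dim W=r=\dim V_1$; for $\mu\ne 0$, from $(u-\mu\,\id_V)(S)=-\mu S=S$, $(u-\mu\,\id_V)(P)\subseteq P$ and $P\cap S=\{0\}$ one gets $(u-\mu\,\id_V)(V_1)=(u-\mu\,\id_V)(P)\oplus S$, of dimension at least $\dim S=r=\dim V_1$; so in all cases $\rk(u_{|V_1}-\mu\,\id_{V_1})=\dim V_1$.

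\emph{Main obstacle.} The delicate point is the case $\mu\ne 0$, which is awkward precisely when $u$ has a large eigenspace, say $\dim\Ker(u-\mu\,\id_V)$ as big as $r$ (for instance $u$ equal to the identity on a rank-$r$ subspace and to $0$ on a $\kappa$-dimensional one). Such eigenspaces lie inside $\im u$, which $V_1$ is forced to contain for $u$-stability and so that the complement $V_2$ lands inside $\Ker u$; hence they cannot be avoided. The cure is the auxiliary summand $S\subseteq\Ker u$ of dimension \emph{exactly} $r$: a larger $S$ would turn $0$ into a dominant eigenvalue of $u_{|V_1}$, whereas $\dim S=r$ brings the image of $u_{|V_1}-\mu\,\id_{V_1}$ back up to dimension $r$ for every $\mu\ne 0$ while leaving the rank of $u_{|V_1}$ itself equal to $r$.
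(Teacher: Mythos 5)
Your proof is correct and is essentially the same as the paper's: after reducing to $\lambda=0$, you take $V_1$ to be the span of $\im u$, a complement of $\Ker u$, and an auxiliary rank-sized subspace $S\subseteq\Ker u$ disjoint from the first two, then take $V_2\subseteq\Ker u$ complementary. The only cosmetic difference is in the last verification: you exhibit $S$ directly inside $\im(u_{|V_1}-\mu\,\id)$ for $\mu\neq 0$, while the paper bounds the codimension of $\Ker(u_{|V_1}-\mu\,\id)$ via its containment in $\im u_{|V_1}$; both give the same estimate.
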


\begin{proof}
Replacing $u$ with $u-\lambda\id_V$, no generality is lost in assuming that $\lambda=0$.
Denote by $\rk u$ the rank of $u$.

We choose a complementary subspace $W$ of $\Ker(u)$ in $V$.
By the rank theorem, $W$ has dimension $\rk u$, which is less than $\dim V$
and hence than $\dim \Ker(u)$. Hence, we can also choose a linear subspace $W'$ of $\Ker u$ such that
$\dim W'=\rk u$ and $W' \cap (W+ \im u)=\{0\}$.
Set $V_1:=W'\oplus (W+ \im(u))$. The linear subspace $V_1$ is stable under $u$ since it includes $\im u$.
As $V_1+\Ker(u)=V$, we can choose a linear subspace $V_2$ of $\Ker u$ such that $V_1 \oplus V_2=V$.
Hence, $V_2$ is stable under $u$
and $u(x)=\lambda\,x$ for all $x \in V_2$. It remains to show that $V_1$ has the claimed properties.

Denote by $u_1$ the endomorphism of $V_1$ induced by $u$.
Remember that $u$ has infinite rank. Note that $\dim V_1=\dim W'=\rk u$
and that $\im u=\im u_1$ since $u$ vanishes everywhere on $V_2$. Hence, $0$ is not a dominant eigenvalue of $u_1$.
Let $\alpha \in \F \setminus \{0\}$. Then, $\Ker(u_1-\alpha\,\id_{V_1}) \subset \im u_1$, and hence
the codimension of $\Ker(u_1-\alpha\,\id_{V_1})$ in $V_1$ is greater than or equal to the dimension of $W'$, which proves that $\alpha$
is not a dominant eigenvalue of $u_1$.

Therefore, $u_1$ has no dominant eigenvalue.
\end{proof}

From there, we can derive Theorem \ref{dominanteigenvalueCS} from Theorem \ref{theo3}. Assume
that Theorem \ref{theo3} is valid.
Let $p_1,p_2,p_3$ be split polynomials with degree $2$ over $\F$,
and let $u$ be an endomorphism of an infinite-dimensional vector space $V$.
Assume that $u$ has a dominant eigenvalue $\lambda$, that $u-\lambda \id_V$ has infinite rank
and that either $\lambda$ is a $(p_1,p_2,p_3)$-sum or $2\lambda=\tr p_1+\tr p_2+\tr p_3$.

We can find a decomposition $V=V_1\oplus V_2$ given by Lemma \ref{reductionlemma}.
Then, the endomorphism of $V_2$ induced by $u$ is a $(p_1,p_2,p_3)$-sum, owing to Proposition \ref{homothetieprop}.
On the other hand, the endomorphism $u_1$ of $V_1$ induced by $u$ has no dominant eigenvalue and $V_1$ is infinite-dimensional,
whence by Theorem \ref{theo3} the endomorphism $u_1$ is a $(p_1,p_2,p_3)$-sum. By Remark \ref{directsumremark}, we conclude that $u$ is a $(p_1,p_2,p_3)$-sum.

\newpage
\section{Stratifications}\label{stratsection}

Throughout this section, we shall need the following notation on well-ordered sets:

\begin{Def}
Let $D$ be a well-ordered set and $\alpha$ be an element of $D$, but not the greatest one.
Then, we denote by $\alpha+1$ the \textbf{successor} of $\alpha$ (that is, the least element of $\{\beta \in D : \alpha<\beta\}$).
We say that $\alpha$ has a predecessor whenever $\alpha$ is the successor of some element of $D$.
\end{Def}

\subsection{A review of known results}

\begin{Def}
Let $V$ be a non-zero $\F[t]$-module.
A \textbf{stratification} of $V$ is an increasing sequence $(V_\alpha)_{\alpha \in D}$, indexed over a well-ordered set $D$,
of submodules of $V$ in which:
\begin{itemize}
\item For all $\alpha \in D$, the quotient module $V_\alpha/\Bigl(\underset{\beta<\alpha}{\sum} V_\beta\Bigr)$
is non-zero and monogenous;
\item $V=\underset{\alpha \in D}{\sum} V_\alpha$.
\end{itemize}
To any such stratification, we assign the \textbf{dimension sequence} $(n_\alpha)_{\alpha \in D}$ defined by $$n_\alpha:=\dim_\F\biggl(V_\alpha/\underset{\beta<\alpha}{\sum} V_\beta\biggr)$$
(if this dimension is not finite, we shall denote it by $+\infty$ rather than by $\aleph_0$).
\end{Def}

Let $(V_\alpha)_{\alpha \in D}$ be a stratification of $V$.
For every $\alpha \in D$, we can choose a vector $x_\alpha \in V_\alpha$ such that $V_\alpha=\F[t] x_\alpha+\underset{\beta<\alpha}{\sum} V_\beta$,
and we note that if $n_\alpha$ is finite then
$V_\alpha=\F_{n_\alpha-1}[t] x_\alpha \oplus \underset{\beta<\alpha}{\sum} V_\beta$,
otherwise $V_\alpha=\F[t]\,x_\alpha \oplus \underset{\beta<\alpha}{\sum} V_\beta$, and in any case
$(t^k x_\alpha)_{0 \leq k<n_\alpha}$ is linearly independent over $\F$.
We shall say that the \textbf{vector sequence} $(x_\alpha)_{\alpha \in D}$ is attached to $(V_\alpha)_{\alpha \in D}$.
In that case, an obvious transfinite induction shows that, for all $\alpha$ and $\beta$ in $D$ with $\beta<\alpha$,
the family $(t^k\,x_\delta)_{\beta \leq \delta \leq \alpha, \; 0 \leq k <n_\delta}$ is linearly independent over $\F$ and
$$\biggl[\underset{\gamma<\beta}{\sum} V_\gamma\biggr] \oplus \Vect_\F\Bigl(\bigl(t^k\, x_\delta\bigr)_{\beta \leq \delta \leq \alpha, \; 0 \leq k <n_\delta}\Bigr)=V_\alpha.$$
Moreover the family $(t^k x_\delta)_{\beta \leq \delta < \alpha, \; 0 \leq k <n_\delta}$ is linearly independent over $\F$ and
$$\biggl[\underset{\gamma<\beta}{\sum} V_\gamma\biggr] \oplus \Vect_\F\Bigl(\bigl(t^k\,x_\delta\bigr)_{\beta \leq \delta < \alpha, \; 0 \leq k <n_\delta}\Bigr)
=\underset{\gamma<\alpha}{\sum} V_\gamma.$$
In particular, $(t^k\, x_\alpha)_{\alpha \in D, \; 0 \leq k <n_\alpha}$ is a basis of the vector space $V$.
As a special case, we get the obvious consequence:

\begin{lemma}
Let $V$ be an $\F[t]$-module with a stratification $(V_\alpha)_{\alpha \in D}$ whose corresponding dimension sequence
is denoted by $(n_\alpha)_{\alpha \in D}$. Assume that $n_\alpha=+\infty$ for all $\alpha \in D$.
Then, $V$ is free.
\end{lemma}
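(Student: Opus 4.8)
The plan is to prove the slightly more precise statement that the attached vector sequence $(x_\alpha)_{\alpha \in D}$ is an $\F[t]$-basis of $V$. For this I would use only the facts recorded just before the statement: since $n_\alpha = +\infty$ for every $\alpha \in D$, we have $V_\alpha = \F[t]\,x_\alpha \oplus \sum_{\beta<\alpha} V_\beta$ for all $\alpha$, and the family $(t^k x_\alpha)_{\alpha \in D,\ k \in \N}$ is an $\F$-basis of the $\F$-vector space $V$.

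First I would note that the $\F[t]$-submodule of $V$ generated by the $x_\alpha$'s contains all the vectors $t^k x_\alpha$, hence contains their $\F$-span, which is $V$; so $(x_\alpha)_{\alpha \in D}$ generates $V$ as an $\F[t]$-module. Then I would consider the $\F[t]$-linear map $\Phi\colon \bigoplus_{\alpha \in D} \F[t] \to V$ sending the canonical generator $e_\alpha$ to $x_\alpha$: it is surjective by the previous point, and it maps the canonical $\F$-basis $(t^k e_\alpha)_{\alpha \in D,\ k \in \N}$ of the source bijectively onto the $\F$-basis $(t^k x_\alpha)_{\alpha \in D,\ k \in \N}$ of $V$; therefore $\Phi$ is an isomorphism of $\F$-vector spaces, hence also an isomorphism of $\F[t]$-modules, and $V$ is free. (Equivalently, one can check $\F[t]$-independence of $(x_\alpha)_{\alpha \in D}$ directly: a relation $\sum_\alpha P_\alpha(t)\,x_\alpha = 0$ with finitely many nonzero $P_\alpha \in \F[t]$ expands, after writing $P_\alpha = \sum_k c_{\alpha,k}\, t^k$, into an $\F$-linear relation among the $t^k x_\alpha$, which forces every $c_{\alpha,k}$, and hence every $P_\alpha$, to vanish.)

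There is no genuine obstacle here: the lemma is an immediate unwinding of the basis statement already established for arbitrary stratifications. The only point deserving attention is to invoke that statement in the case $n_\alpha = +\infty$, so that for each $\alpha$ the exponent $k$ in $t^k x_\alpha$ ranges over all of $\N$ rather than over a finite initial segment; this is precisely what makes the $\F[t]$-span of a single $x_\alpha$ coincide with the $\F$-span of $(t^k x_\alpha)_{k \in \N}$, and hence what allows the passage from an $\F$-basis to an $\F[t]$-basis.
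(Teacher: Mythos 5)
Your proof is correct and is exactly the intended argument: the paper states this lemma as an ``obvious consequence'' of the preceding discussion (namely that $(t^k x_\alpha)_{\alpha\in D,\;0\le k<n_\alpha}$ is an $\F$-basis of $V$) and gives no further proof, and your write-up is precisely the routine unwinding of that fact in the case $n_\alpha=+\infty$ for all $\alpha$.
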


Conversely, consider a sequence $(x_\alpha)_{\alpha \in D}$, indexed over a well-ordered set $D$,
of vectors of $V$ such that $x_\alpha \not\in \underset{\beta <\alpha}{\sum} \F[t] x_\beta$ for all
$\alpha \in D$, and $V=\underset{\alpha \in D}{\sum} \F[t]x_\alpha$.
Then, one sees that $\Bigl(\underset{\beta \leq \alpha}{\sum} \F[t]\, x_\beta\Bigr)_{\alpha \in D}$
is a stratification of $V$ with corresponding vector sequence $(x_\alpha)_{\alpha \in D.}$

Let us now recall the definition of a connector.

\begin{Def}\label{defconnector}
Let $u$ be an endomorphism of a vector space $V$.
Let $(V_\alpha)_{\alpha \in D}$ be a stratification of $V^u$, with attached dimension sequence
$(n_\alpha)_{\alpha\in D}$ and an associated vector sequence $(x_\alpha)_{\alpha \in D.}$

An endomorphism $v$ of $V$ is called a \textbf{connector} for $u$ with respect to the vector sequence
$(x_\alpha)_{\alpha \in D}$ whenever it acts as follows on the basis $(t^k\,x_\alpha)_{\alpha \in D, 0 \leq k<n_\alpha}$:
for all $\alpha \in D$ such that $n_\alpha<+\infty$ and $\alpha$ is not the greatest element of $D$,
we have $v(t^{n_{\alpha}-1}\,x_\alpha)=x_{\alpha+1}$ modulo $V_\alpha$,
and all the other vectors are mapped to $0$.
\end{Def}

Here is the basic result that demonstrates the interest of connectors:

\begin{prop}[Proposition 8 of \cite{dSPSum4}]\label{connectorprop}
Let $u$ be an endomorphism of a vector space $V$.
Let $(V_\alpha)_{\alpha \in D}$ be a stratification of $V^u$, with attached dimension sequence
$(n_\alpha)_{\alpha\in D}$ and an associated vector sequence $(x_\alpha)_{\alpha \in D}$.

Assume that if $D$ has a maximum $M$ then $n_M=+\infty$. Then, for any connector
$v$ for $u$ with respect to $(x_\alpha)_{\alpha \in D}$, the endomorphism $u+v$ is elementary.
\end{prop}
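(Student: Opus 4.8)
The plan is to exhibit an explicit free generating set of the $\F[t]$-module $V^{w}$, where $w:=u+v$, using the fact recalled before the statement that $\mathcal{B}:=(t^k x_\alpha)_{\alpha\in D,\ 0\le k<n_\alpha}$ is an $\F$-basis of $V$. First I would organize $D$ into \emph{blocks}. Call $\gamma\in D$ a \textbf{block start} if $\gamma$ has no predecessor, or if its predecessor $\beta$ satisfies $n_\beta=+\infty$; let $D_0$ be the set of block starts, and for $\gamma\in D_0$ let $B_\gamma$ be the set of all $\delta\ge\gamma$ such that $[\gamma,\delta]$ contains no block start other than $\gamma$. The sets $B_\gamma$ ($\gamma\in D_0$) partition $D$. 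A short argument shows that each $B_\gamma$ has order type at most $\omega$, that $n_\delta<+\infty$ for every $\delta\in B_\gamma$ except possibly the last one, and --- using the hypothesis that $n_M=+\infty$ whenever $D$ has a maximum $M$ --- that if $B_\gamma$ is finite then its largest element $\delta$ satisfies $n_\delta=+\infty$. Walking along a block $B_\gamma$ and concatenating the segments $x_\delta,tx_\delta,\dots,t^{n_\delta-1}x_\delta$ then yields a bijection $j\mapsto(\mu_\gamma(j),\kappa_\gamma(j))$ from $\N$ onto $\{(\delta,k):\delta\in B_\gamma,\ 0\le k<n_\delta\}$; globally, $(\gamma,j)\mapsto(\mu_\gamma(j),\kappa_\gamma(j))$ is a bijection from $D_0\times\N$ onto the index set of $\mathcal{B}$.

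My claim is that $V^{w}$ is free with basis $(x_\gamma)_{\gamma\in D_0}$. For the spanning part, I would prove by transfinite induction on $\alpha\in D$, and for fixed $\alpha$ by finite induction on $k$, that $t^k x_\alpha\in\sum_{\gamma\in D_0}\F[t]\cdot_w x_\gamma$, where $p\cdot_w x:=p(w)(x)$. When $k\ge 1$ the connector contributes nothing to $w(t^{k-1}x_\alpha)$, so $t^k x_\alpha=w(t^{k-1}x_\alpha)$ and the inner induction applies. When $k=0$ and $\alpha\in D_0$ there is nothing to do; otherwise $\alpha=\beta+1$ with $n_\beta<+\infty$, and the identity $w(t^{n_\beta-1}x_\beta)=t^{n_\beta}x_\beta+x_\alpha+z_\beta$ with $z_\beta\in V_\beta$, together with the fact that $t^{n_\beta-1}x_\beta,t^{n_\beta}x_\beta,z_\beta$ all lie in $V_\beta=\sum_{\delta\le\beta}\F[t]\,x_\delta$, lets the outer induction hypothesis conclude that $x_\alpha\in\sum_{\gamma\in D_0}\F[t]\cdot_w x_\gamma$.

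For the independence part, equip the index set of $\mathcal{B}$ with the lexicographic well-order, $(\alpha,k)<(\alpha',k')$ iff $\alpha<\alpha'$, or $\alpha=\alpha'$ and $k<k'$. I would show by induction on $j$ that, for every $\gamma\in D_0$,
$$w^j(x_\gamma)\in t^{\kappa_\gamma(j)}x_{\mu_\gamma(j)}+\Vect_\F\bigl(\{t^k x_\alpha:(\alpha,k)<(\mu_\gamma(j),\kappa_\gamma(j))\}\bigr).$$
The inductive step divides into two cases according to whether multiplying the current leading vector $t^{\kappa}x_{\mu}$ by $t$ stays inside the segment attached to $\mu$ (the connector vanishes, and the new leading index is $(\mu,\kappa+1)$) or reaches the top of that segment, i.e.\ $\kappa=n_\mu-1$ (then $w(t^{n_\mu-1}x_\mu)=t^{n_\mu}x_\mu+x_{\mu+1}+z_\mu$ with $t^{n_\mu}x_\mu,z_\mu\in V_\mu$, and the new leading index is $(\mu+1,0)$). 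In both cases one checks, using only that $t^{n_\delta}x_\delta\in V_\delta$ for every $\delta$ and that a connector maps $t^{n_\delta-1}x_\delta$ into $x_{\delta+1}+V_\delta$, that $w$ sends the lower-order remainder into the new lower-order span. A third, bad case --- $\kappa=n_\mu-1$ with $\mu$ the last element of its block and $n_\mu$ finite --- would destroy the triangularity; it is precisely the hypothesis on $n_M$ that rules this case out, since finite blocks end with an infinite stratum.

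Since the leading indices $(\mu_\gamma(j),\kappa_\gamma(j))$ run bijectively over the index set of $\mathcal{B}$ and the remainders are strictly lower, the family $(w^j x_\gamma)_{\gamma\in D_0,\ j\in\N}$ is $\F$-linearly independent; combined with the spanning step it is therefore an $\F$-basis of $V$, so $V^{w}$ is free on $(x_\gamma)_{\gamma\in D_0}$ and $u+v$ is elementary. (Equivalently, the submodules $\sum_{\delta\in D_0,\ \delta\le\gamma}\F[t]\cdot_w x_\delta$ form a stratification of $V^{w}$ all of whose dimension terms are $+\infty$, and one invokes the earlier lemma on such stratifications.) I expect the main obstacle to be the bookkeeping: controlling the correction terms $z_\delta\in V_\delta$ that the definition of a connector leaves free, and verifying that the lexicographic order is respected in every branch of the triangularity induction; the structural facts about blocks are what make the scheme close, and the hypothesis that the top stratum (when there is one) is infinite-dimensional is used exactly to avoid the torsion that would otherwise appear at the end of the last block.
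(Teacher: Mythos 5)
Your argument is correct. The paper itself gives no proof of this proposition --- it is imported wholesale as Proposition~8 of \cite{dSPSum4} --- so there is no in-text argument to compare against; but your proof stands on its own and the key steps all hold up. The block decomposition is sound: a block start is the minimum of $D$, a limit ordinal, or a successor whose predecessor has infinite stratum dimension, and from this one gets exactly what you claim (each block has order type at most $\omega$, only its last element --- if any --- can have infinite $n_\delta$, and finite blocks must end with an infinite stratum, which is where the hypothesis $n_M=+\infty$ is used). The spanning step is a clean transfinite induction on the lex-ordered index set: either $t^k x_\alpha = w(t^{k-1}x_\alpha)$ when $k\ge 1$, or $x_\alpha = w(t^{n_\beta-1}x_\beta) - t^{n_\beta}x_\beta - z_\beta$ with $t^{n_\beta}x_\beta, z_\beta \in V_\beta$ already known to lie in $\sum_\gamma \F[t]\cdot_w x_\gamma$ by the outer induction hypothesis. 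The triangularity argument for independence is correct as well: in both of your cases the remainder $w(r)$ stays strictly below the new leading index because $V_\delta$ is spanned by basis elements of index at most $(\delta,n_\delta-1)$, and the one case that would break strict triangularity --- a finite top stratum at the end of a block --- is exactly what the block structure and the hypothesis on $n_M$ rule out. Since $(\gamma,j)\mapsto(\mu_\gamma(j),\kappa_\gamma(j))$ is a bijection onto the full index set, the standard highest-term argument gives $\F$-linear independence of $(w^j x_\gamma)$, and together with spanning this shows $V^{u+v}$ is free on $(x_\gamma)_{\gamma\in D_0}$.
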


\subsection{Good stratifications}\label{goodstratsection}

In light of Proposition \ref{connectorprop}, our wish is to create a connector that is annihilated by
$t^2-at$ for a given scalar $a$. This is possible if we consider special cases of stratifications:

\begin{Def}
Let $(V_\alpha)_{\alpha \in D}$ be a stratification of $V$, with corresponding dimension sequence $(n_\alpha)_{\alpha \in D}$.
We define three potential properties of that stratification:
\begin{itemize}
\item[$\PA$] One has $n_\alpha \geq 2$ whenever $\alpha \in D$ has a predecessor.
\item[$\PAplus$] One has $n_\alpha \geq 2$ whenever $\alpha \in D$ has a predecessor or $\alpha$ is the minimum of $D$.
\item[$\PM$] There is no maximum in $D$.
\end{itemize}
A stratification is called \textbf{good} whenever it satisfies both properties $\PAplus$  and $\PM$.
\end{Def}

The following basic result motivates that we focus on good stratifications:

\begin{prop}
Let $V$ be a vector space and $u$ be an endomorphism of $V$. Let $a \in \F$.
Let $(V_\alpha)_{\alpha \in D}$ be a stratification of $V^u$ that satisfies properties $\PA$
and $\PM$.
Then, $u$ is $a$-elementarily decomposable.
\end{prop}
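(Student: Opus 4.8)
The goal is, given a stratification $(V_\alpha)_{\alpha\in D}$ of $V^u$ satisfying $\PA$ and $\PM$, to build an endomorphism $v$ with $v^2=av$ and $u-v$ elementary. By Proposition~\ref{connectorprop}, if I choose a vector sequence $(x_\alpha)_{\alpha\in D}$ attached to the stratification (legitimate since $\PM$ holds, so $D$ has no maximum and the hypothesis "$n_M=+\infty$ if $D$ has a maximum" is vacuous), then $u+v$ is elementary for \emph{any} connector $v$. So the real content is: choose the vector sequence and the freedom in the connector cleverly so that the resulting connector $v$ satisfies $v^2=av$; then replace $v$ by $-v$ (still a connector up to relabelling, or just note that $(-v)^2=a(-v)$ with $a$ replaced by $-a$ — better: run the whole argument with $-a$ in place of $a$, or observe the sign is immaterial since we may absorb it) to get $u-v$ elementary.

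\medskip
\textbf{Key steps.} First I recall what a connector does: it kills every basis vector $t^kx_\alpha$ except the "top" vector $t^{n_\alpha-1}x_\alpha$ of each stratum $\alpha$ that has a successor with finite $n_\alpha$, and sends that top vector to $x_{\alpha+1}$ modulo $V_\alpha$; crucially, the value $v(t^{n_\alpha-1}x_\alpha)$ is only prescribed \emph{modulo $V_\alpha$}, so I am free to adjust it by any element of $V_\alpha$. The plan is to exploit property $\PA$: if $\alpha$ has a predecessor then $n_\alpha\geq 2$, so the stratum $\alpha$ contains the two linearly independent vectors $x_\alpha$ and $tx_\alpha$ (indeed $t^kx_\alpha$ for $0\le k<n_\alpha$ with $n_\alpha\ge 2$). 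I will arrange the bookkeeping so that $v$ permutes/pairs basis vectors into a shape from which $v^2=av$ is immediate. Concretely, for a successor stratum $\alpha+1$, I want to choose the representative $x_{\alpha+1}$ so that $x_{\alpha+1}$ itself (not merely its class mod $V_\alpha$) is an eigenvector-like target: set $v(t^{n_\alpha-1}x_\alpha):=x_{\alpha+1}$ on the nose, and then also want $v(x_{\alpha+1})=a\,x_{\alpha+1}$ — but $x_{\alpha+1}$ is one of the "other" basis vectors that a connector sends to $0$. To reconcile this I instead build $v$ so that on the two-dimensional piece $\Vect(x_{\alpha+1},t^{n_\alpha-1}x_\alpha)$ (using $n_{\alpha+1}\ge 2$ isn't even needed here; what I need is that the \emph{target} stratum has room) $v$ acts by the square-zero-like matrix $\begin{bmatrix}0&0\\1&0\end{bmatrix}$ rescaled, or by an idempotent-like matrix; the point is that $v$ restricted to the span of $\{$top vector of $\alpha$, chosen vector of $\alpha+1\}$ should be a $2\times 2$ matrix annihilated by $t^2-at$, and everything else goes to $0$. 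Since $t^{n_\alpha-1}x_\alpha$ lies in $V_\alpha$ and $x_{\alpha+1}\notin V_\alpha$, these pairs are disjoint across different $\alpha$, and the union of all such two-element sets, together with the remaining basis vectors (sent to $0$), gives $v$ on all of $V$; then $v^2=av$ block by block.

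\medskip
\textbf{The main obstacle.} The delicate point is the interaction between the "modulo $V_\alpha$" indeterminacy in the definition of a connector and the requirement $v^2=av$: I need $v(x_{\alpha+1})$ to be a scalar multiple of $x_{\alpha+1}$, yet the connector axioms force $v$ to vanish on $x_{\alpha+1}$ unless $x_{\alpha+1}$ happens to be the top vector of its own stratum, i.e. unless $n_{\alpha+1}=1$. Property $\PA$ is exactly what rules this out when $\alpha+1$ has a predecessor — and $\alpha+1$ \emph{does} have a predecessor, namely $\alpha$ — so $n_{\alpha+1}\ge 2$ and $x_{\alpha+1}$ is genuinely \emph{not} a top vector, hence $v(x_{\alpha+1})=0$, which is $a\cdot 0$... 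This suggests the correct design is not to pair $\{t^{n_\alpha-1}x_\alpha, x_{\alpha+1}\}$ but to use the redundancy inside stratum $\alpha+1$: choose the representative of the next stratum so that $v(t^{n_\alpha-1}x_\alpha)=x_{\alpha+1}$ exactly, set $v$ to also act on $t\,x_{\alpha+1}$ (which exists since $n_{\alpha+1}\ge 2$) — no: $t\,x_{\alpha+1}$ is again killed by a connector. The resolution, and the step I expect to require the most care, is to \emph{modify} the connector within its allowed freedom: replace the naive connector $v_0$ by $v:=v_0 + (\text{correction supported on the }V_\alpha\text{'s})$, where the correction on stratum $\alpha+1$ adds $a\,x_{\alpha+1}\mapsto$ (something in $V_{\alpha+1}$'s earlier part) is not allowed either... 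Rather: the honest approach is to define $v$ directly (not as a modification of some $v_0$), check it \emph{is} a connector for the chosen vector sequence by verifying the two defining conditions, and verify $v^2=av$; the freedom "mod $V_\alpha$" is used precisely to let $v(t^{n_\alpha-1}x_\alpha)$ equal a specific vector of the form $x_{\alpha+1}$ or $x_{\alpha+1}+a^{-1}(\cdots)$ chosen so that applying $v$ again lands back appropriately. Getting this vector sequence and these specific target vectors right — so that simultaneously (a) $(x_\alpha)$ is a valid vector sequence for the stratification, (b) $v$ as defined is a connector, and (c) $v^2=av$ — is the crux, and I expect it to occupy the bulk of the proof; once it is set up, ellementarity of $u-v$ is immediate from Proposition~\ref{connectorprop} (applied with $-v$, or after noting the construction works verbatim with $-a$).
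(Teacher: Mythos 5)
Your strategy matches the paper's: take a vector sequence attached to the stratification, build an endomorphism $v$ that is (up to sign) a connector, so that $u-v$ is elementary by Proposition~\ref{connectorprop}, while exploiting the ``modulo $V_\alpha$'' slack and property~$\PA$ to force $v^2=av$. You also correctly isolate the two points that do the work: $\PM$ makes the extra hypothesis of Proposition~\ref{connectorprop} vacuous, and $\PA$ ensures that $x_{\alpha+1}$ is never the top vector of its own stratum, so any connector must send it to $0$. That is indeed the whole mechanism.

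However, the proposal is a plan, not a proof: you never write down the endomorphism $v$, and your exploratory attempts to guess its shape are off. The matrix $\begin{bmatrix}0&0\\1&0\end{bmatrix}$ you suggest is square-zero, so that block satisfies $v^2=av$ only when $a=0$; you then worry that you need $v(x_{\alpha+1})$ to be a nonzero multiple of $x_{\alpha+1}$, but that is not required, and indeed it is forbidden by the connector axioms (which is the tension you notice and never resolve). The parenthetical ``using $n_{\alpha+1}\ge 2$ isn't even needed here'' is also wrong: that bound, supplied by $\PA$, is exactly what makes $v(x_{\alpha+1})=0$. The missing one line is the explicit formula: for each $\alpha\in D$ with $n_\alpha<+\infty$, set
\[
v\bigl(u^{n_\alpha-1}(x_\alpha)\bigr):=a\,u^{n_\alpha-1}(x_\alpha)-x_{\alpha+1},
\]
and send every other basis vector $u^k(x_\beta)$ to $0$. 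Then $-v$ is a connector (the correction $-a\,u^{n_\alpha-1}(x_\alpha)$ lies in $V_\alpha$), so $u-v=u+(-v)$ is elementary; and $v^2=av$ because $v^2\bigl(u^{n_\alpha-1}(x_\alpha)\bigr)=a\,v\bigl(u^{n_\alpha-1}(x_\alpha)\bigr)-v(x_{\alpha+1})=a\,v\bigl(u^{n_\alpha-1}(x_\alpha)\bigr)$, the last step using $n_{\alpha+1}\ge 2$ from $\PA$, while both sides vanish on the remaining basis vectors. Concretely, on the span of $u^{n_\alpha-1}(x_\alpha)$ and $x_{\alpha+1}$ the matrix of $v$ is $\begin{bmatrix}a&0\\-1&0\end{bmatrix}$, not the square-zero block you proposed. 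With this formula inserted, your outline becomes the paper's proof; without it, the crux of the argument is absent.
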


\begin{proof}
Let $(x_\alpha)_{\alpha \in D}$ be a vector sequence attached to $(V_\alpha)_{\alpha \in D}$, and denote by
$(n_\alpha)_{\alpha \in D}$ the associated dimension sequence.
We define $v\in \End(V)$ on the basis $(u^k(x_\alpha))_{\alpha \in D, 0 \leq k<n_\alpha}$
as follows: For all $\alpha \in D$ such that $n_\alpha<+\infty$, we put
$$v\bigl(u^{n_\alpha-1}(x_\alpha)\bigr):=a u^{n_\alpha-1}(x_\alpha)-x_{\alpha+1},$$
(note that this makes sense because, by property $\PM$, the element $\alpha$ must have a successor)
and all the other basis vectors are mapped to $0$. Then, $-v$ is a connector for $u$
with respect to the sequence $(x_\alpha)_{\alpha \in D}$, and hence $u-v$ is elementary.
On the other hand, we check that $v^2=a v$: Given $\alpha \in D$ such that $n_\alpha<+\infty$,
we see that $v(x_{\alpha+1})=0$ because of property $\PA$, and it follows that
$v^2$ and $av$ agree on $u^{n_\alpha-1}(x_\alpha)$; on the other hand both $v^2$ and $av$ vanish at all the other
basis vectors, and hence $v^2=av$. This completes the proof.
\end{proof}

\begin{cor}\label{goodstrattoelementary}
Let $V$ be a vector space and $u$ be an endomorphism of $V$ with a good stratification.
Then, for all $a \in \F$, the endomorphism $u$ is $a$-elementarily decomposable.
\end{cor}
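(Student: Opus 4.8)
The plan is to observe that Corollary~\ref{goodstrattoelementary} is an immediate consequence of the proposition established immediately above it. First I would unwind the definitions: by hypothesis there is a good stratification $(V_\alpha)_{\alpha \in D}$ of $V^u$, which by definition means that this stratification satisfies both property $\PAplus$ and property $\PM$. The only substantive point is the elementary implication $\PAplus \Rightarrow \PA$: property $\PA$ requires $n_\alpha \geq 2$ only when $\alpha$ has a predecessor, whereas $\PAplus$ requires this in the same situation \emph{and} when $\alpha$ is the minimum of $D$; hence any stratification satisfying $\PAplus$ a fortiori satisfies $\PA$.

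Therefore the good stratification of $V^u$ satisfies $\PA$ and $\PM$, and I would conclude by invoking the preceding proposition, which applies verbatim and asserts that for every $a \in \F$ the endomorphism $u$ is $a$-elementarily decomposable. This completes the proof.

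I do not expect any real obstacle here: the whole content is carried by the previous proposition, whose proof builds an explicit endomorphism $v$ with $v^2 = av$ (using $\PM$ to ensure that each relevant $\alpha$ has a successor, so that $x_{\alpha+1}$ makes sense, and $\PA$ to force $v(x_{\alpha+1})=0$ and hence $v^2 = av$), while $-v$ is a connector so that $u - v$ is elementary via Proposition~\ref{connectorprop}. The stronger condition $\PAplus$ built into the notion of a good stratification is simply not needed for this corollary; it is kept in the definition because it will be required in later sections when stratifications are modified or pieced together. So the only thing to verify is the one-line observation $\PAplus \Rightarrow \PA$.
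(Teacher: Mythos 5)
Your proof is correct and matches the paper's intended argument: a good stratification satisfies $\PAplus$, hence $\PA$, and the preceding proposition then applies directly. Nothing more is needed.
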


\subsection{A technical lemma on special stratifications}

The following technical result will be used in remote parts of the article.

\begin{lemma}\label{towerofstrat}
Let $V$ be a non-zero $\F[t]$-module. Assume that there is a non-zero submodule $W$ of $V$
such that $V/W$ has a good stratification and $W$ has a stratification that satisfies $\PAplus$. Then, $V$ has a good stratification.
\end{lemma}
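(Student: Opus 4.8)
The idea is to build a stratification of $V$ by concatenating a stratification of $W$ (lying at the bottom, indexed by an initial segment) with a lifted stratification of $V/W$ (lying on top, indexed by the complementary final segment), and to check that the combined index set and dimension sequence inherit the good properties. First I would pick a stratification $(W_\alpha)_{\alpha \in A}$ of $W$ satisfying $\PAplus$, and a good stratification $(\overline{V}_\beta)_{\beta \in B}$ of $V/W$, with attached vector sequences $(w_\alpha)_{\alpha \in A}$ and $(\overline{x}_\beta)_{\beta \in B}$ and dimension sequences $(m_\alpha)_{\alpha \in A}$, $(n_\beta)_{\beta \in B}$. For each $\beta \in B$ choose a representative $x_\beta \in V$ of $\overline{x}_\beta$. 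Form the well-ordered set $D := A \sqcup B$ with every element of $A$ declared smaller than every element of $B$ (the ordinal sum), and define the vector sequence $(y_\delta)_{\delta \in D}$ by $y_\delta := w_\delta$ for $\delta \in A$ and $y_\delta := x_\delta$ for $\delta \in B$.

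The key step is to verify that $(y_\delta)_{\delta \in D}$ generates $V$ over $\F[t]$ and that $y_\delta \notin \sum_{\gamma<\delta}\F[t]\,y_\gamma$ for every $\delta$, so that by the converse construction recalled just before Definition~\ref{defconnector} the family $\bigl(\sum_{\gamma \leq \delta}\F[t]\,y_\gamma\bigr)_{\delta \in D}$ is a stratification of $V$ with vector sequence $(y_\delta)_{\delta \in D}$. Generation is clear: $\sum_{\alpha \in A}\F[t]\,w_\alpha = W$ since $(W_\alpha)$ is a stratification of $W$, and modulo $W$ the $x_\beta$ generate $V/W$, so the $y_\delta$ generate $V$. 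For the non-triviality condition, if $\delta \in A$ then $\sum_{\gamma<\delta}\F[t]\,y_\gamma = \sum_{\gamma \in A,\,\gamma<\delta}\F[t]\,w_\gamma = \sum_{\gamma<\delta} W_\gamma$, which does not contain $w_\delta$ by the stratification property of $(W_\alpha)$; if $\delta \in B$, then $\sum_{\gamma<\delta}\F[t]\,y_\gamma = W + \sum_{\gamma \in B,\,\gamma<\delta}\F[t]\,x_\gamma$, and reducing modulo $W$ this becomes $\sum_{\gamma<\delta}\overline{V}_\gamma$, which does not contain $\overline{x}_\delta$, hence does not contain $x_\delta$. Next I would identify the dimension sequence of this stratification: for $\delta \in A$ it equals $m_\delta$, while for $\delta \in B$ one checks from the displayed direct-sum decompositions that the quotient $V_\delta/\sum_{\gamma<\delta}V_\gamma$ is isomorphic (via the projection $V \to V/W$) to $\overline{V}_\delta/\sum_{\gamma<\delta}\overline{V}_\gamma$, so the dimension there equals $n_\delta$. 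Thus the dimension sequence of the combined stratification is the concatenation of $(m_\alpha)_{\alpha \in A}$ and $(n_\beta)_{\beta \in B}$.

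It remains to check the good properties, which is where the hypotheses on $W$ are used precisely. Property $\PM$ for $D$: the maximum of $D$, if any, would be the maximum of $B$, but $(\overline{V}_\beta)$ is good hence satisfies $\PM$, so $B$ has no maximum; therefore $D$ has no maximum. Property $\PAplus$ for $D$: let $\delta \in D$ either have a predecessor or be the minimum of $D$. If $\delta$ is the minimum of $D$, then (as $W \neq 0$ forces $A \neq \emptyset$) $\delta$ is the minimum of $A$, and $\PAplus$ for $(W_\alpha)$ gives $m_\delta \geq 2$. If $\delta$ has a predecessor $\delta-1$ in $D$: when $\delta \in A$, its predecessor lies in $A$ as well, so $\PA$ — which follows from $\PAplus$ — for $(W_\alpha)$ gives $m_\delta \geq 2$; when $\delta \in B$, either $\delta$ has a predecessor within $B$, in which case $\PAplus$ (hence $\PA$) for $(\overline{V}_\beta)$ gives $n_\delta \geq 2$, or $\delta$ is the minimum of $B$ and its $D$-predecessor is the maximum of $A$ — but this case cannot occur, because $A$ has no maximum: indeed $A$ has a maximum only if $(W_\alpha)$ has a maximal stratum, and if it did, the final stratum would have no successor, yet $\PAplus$ places no constraint forbidding a maximum in $A$. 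This last point is the one subtlety, and the cleanest fix is to observe that in the boundary case (minimum of $B$ having a predecessor in $D$, namely a maximum of $A$) we may simply re-index: replace the maximal stratum of $W$, if it exists, by noting that $A$'s last stratum together with $B$'s first stratum can be merged — or, more simply, argue that if $A$ has a maximum $\alpha_0$ then $\delta_0 := \min B$ in $D$ has predecessor $\alpha_0$, and we need $n_{\delta_0} \geq 2$, which is guaranteed because $\delta_0 = \min B$ and $(\overline{V}_\beta)$ satisfies $\PAplus$. So in fact $\PAplus$ on the $B$-part handles exactly this boundary, and $\PAplus$ holds for $D$. Combining, $D$ satisfies $\PAplus$ and $\PM$, so the constructed stratification of $V$ is good.

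\textbf{Main obstacle.} The only genuinely delicate point is the treatment of the ``seam'' between $A$ and $B$: one must make sure that the element $\min B$, viewed inside $D$, is covered by property $\PAplus$ — which it is, precisely because $\PAplus$ (not merely $\PA$) was assumed for the good stratification of $V/W$, forcing $n_{\min B} \geq 2$ — and symmetrically that $\min D = \min A$ is covered because $\PAplus$ was assumed for $W$. The asymmetry in the hypotheses ($\PAplus$ for $W$ but only implicitly handled at the bottom, versus $\PAplus$ built into ``good'' for $V/W$) is exactly calibrated so that both ends of the seam are fine; verifying this matching is the crux, and the rest is the routine bookkeeping sketched above.
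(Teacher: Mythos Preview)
Your proof is correct and follows essentially the same route as the paper: form the ordinal sum of the two index sets, concatenate the two stratifications (the paper does this by taking inverse images of the submodules of $V/W$ rather than by lifting vector sequences, but the content is identical), and then verify $\PM$ and $\PAplus$ on the concatenated index set. Your treatment of the seam is ultimately right---$\PAplus$ for the good stratification of $V/W$ forces $n_{\min B}\geq 2$ whether or not $A$ has a maximum---so you can state that directly and drop the false start about $A$ having no maximum.
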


\begin{proof}
We take a stratification $(W_k)_{k \in D}$ of $W$ that satisfies $\PAplus$ and a good stratification
$(V_l)_{l \in D'}$ of $V/W$, in which $D$ and $D'$ are ordinals.
We denote by $(n_k)_{k \in D}$ and $(m_l)_{l \in D'}$ the associated dimension sequences.
 We equip
 $$L:=(\{0\} \times D) \cup (\{1\} \times D')$$
with the lexicographic ordering, which makes it a well-ordered set.
For $k \in D$, we set $E_{0,k}:=W_k$ and, for $l \in D'$, we define $E_{1,l}$ as the inverse image of
$V_l$ under the canonical projection from $V$ to $V/W$.
In particular, $E_{0,k} \subset W \subsetneq E_{1,l}$ for all $k \in D$ and $l \in D'$, and it is easily checked
 that $(E_a)_{a \in L}$ is an increasing sequence of submodules of $V$.
Moreover, for all $k \in D$, we see that $\underset{a \in L, a<(0,k)}{\sum} E_a=\underset{k' \in D, k'<k}{\sum} W_{k'}$ and
hence the $\F[t]$-module $E_{0,k}/ \biggl(\underset{a \in L, a<(0,k)}{\sum} E_a\biggr)$ is monogenous and has dimension $n_k$ as an $\F$-vector space.
Given $l \in D'$, since $(E_a)_{a \in L}$ is increasing we see that
$\underset{a \in L, a<(1,l)}{\sum} E_a=W+\underset{l' \in D', l'<l}{\sum} E_{1,l'}$ which includes $W$, and hence
$$E_{1,l}/\biggl(\underset{a \in L, a<(1,l)}{\sum} E_a\biggr)=E_{1,l}/\biggl(W+\underset{l' \in D', l'<l}{\sum} E_{1,l'}\biggr) \simeq
V_l/\biggl(\underset{l' \in D', l'<l}{\sum} V_{l'}\biggr).$$
Therefore, the $\F[t]$-module $E_{1,l}/\biggl(\underset{a \in L, a<(1,l)}{\sum} E_a\biggr)$ is monogenous and has dimension $m_l$ as an $\F$-vector space.

Hence, $(E_a)_{a \in L}$ is a stratification of $V$. It remains to check that it is a good one.
If $L$ had a maximum, then this maximum would read $(1,M)$ and $M$ would be the maximum of $D'$, which is
impossible because $(V_l)_{l \in D'}$ has property $\PM$.

Finally, let $a \in L$ have a predecessor in $L$ or be the minimum of $L$. If $a=(0,k)$ for some $k \in D$, then $k$ has a predecessor in
$D$ or is the minimum of $D$, and we deduce that $n_k \geq 2$. If $a=(1,l)$ for some $l \in D'$ then we obtain likewise that
$m_l \geq 2$. We deduce that $(E_a)_{a \in L}$ has property $\PAplus$.
\end{proof}

\section{Endomorphisms with no dominant eigenvalue: The uncountable-dimensional case}\label{uncountablesection}

Here, we consider the case of a vector space with uncountable dimension.
In order to prove Theorem \ref{alphaelementarilyTheo} in that restricted setting,
we know from Corollary \ref{goodstrattoelementary} that it suffices to prove
the following result:

\begin{prop}
Let $V$ be a vector space with uncountable dimension, and $u$ be an endomorphism of $V$
with no dominant eigenvalue. Then, $V^u$ has a good stratification.
\end{prop}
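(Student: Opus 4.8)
The plan is to build a good stratification of $V^u$ by exploiting the sheer size of $V$: since $\dim V$ is uncountable, we can afford to index the strata over a large well-ordered set while keeping each individual stratum small (in fact we can make each $n_\alpha$ either $2$ or $+\infty$), and the two key properties $\PAplus$ and $\PM$ are automatically arranged if we are careful about the order type. First I would fix a basis $(e_i)_{i\in I}$ of $V$ with $\card I$ uncountable, well-order $I$, and try to produce the vectors $x_\alpha$ of the attached vector sequence by transfinite recursion, at each step choosing $x_\alpha$ so that it lies outside the submodule $\sum_{\beta<\alpha}\F[t]x_\beta$ generated so far; the crucial point is that this recursion must terminate only once $\sum_\alpha \F[t]x_\alpha = V$, i.e.\ we must at some stage exhaust $V$, and along the way we must be able to choose, infinitely often, a vector $x_\alpha$ whose cyclic submodule $\F[t]x_\alpha$ has dimension $\geq 2$ over $\F$ modulo the previous ones (so as to secure $\PAplus$), and we must never be forced to stop at a maximal index (so as to secure $\PM$).

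The heart of the argument is a counting/exhaustion lemma: at any stage $\alpha$, let $W_\alpha := \sum_{\beta<\alpha}\F[t]x_\beta$; if $W_\alpha \neq V$ then we want to find $x_\alpha \notin W_\alpha$ with $\F[t]x_\alpha \cap W_\alpha \neq \F[t]x_\alpha$ contributing dimension at least $2$, unless $W_\alpha$ already has codimension $1$, in which case we would be stuck with $n_\alpha = 1$. Here is where the hypothesis that $u$ has no dominant eigenvalue enters: if at some stage $W_\alpha$ had codimension exactly $1$ in $V$, pick $x\notin W_\alpha$; then $u(x) \equiv \lambda x \pmod{W_\alpha}$ for some scalar $\lambda$ (because $V/W_\alpha$ is one-dimensional and $u$-stable? — no, $W_\alpha$ need not be $u$-stable). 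So instead I would argue more carefully: as long as $W_\alpha$ is a proper submodule, I would first check whether $V/W_\alpha$ is finite-dimensional over $\F$; if $\dim_\F V/W_\alpha \geq 2$, I can pick $x_\alpha \notin W_\alpha$ with $\F[t]x_\alpha + W_\alpha$ strictly larger and, if necessary, enlarge the choice so that $t x_\alpha \notin \F x_\alpha + W_\alpha$, giving $n_\alpha \geq 2$; the remaining danger is that after some step $W_\alpha$ has codimension $1$. But codimension $1$ forces $W_\alpha$ to be a submodule of corank $1$, and then $V = W_\alpha \oplus \F z$ with $u(z) = \lambda z + w_0$, $w_0 \in W_\alpha$; this would make $\lambda$ a dominant eigenvalue of $u$ once one notes $\rk(u - \lambda\id_V) \leq \dim W_\alpha + 1$... which is not obviously $< \dim V$ unless $\dim W_\alpha < \dim V$. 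Since the recursion only reaches a stage with $W_\alpha$ of codimension $1$ after a chain of submodules, and $\dim V$ is uncountable, one shows $\dim W_\alpha < \dim V$ at every proper stage, hence such a stage would indeed contradict the no-dominant-eigenvalue hypothesis.

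Thus the main obstacle — and the step I would spend the most care on — is showing that the recursion can always be continued with $n_\alpha \geq 2$ whenever $\alpha$ has a predecessor or is the minimum, and that it never ends at a maximal element with finite $n$; concretely, that the only way the process halts is by reaching $W_\alpha = V$, and that when $W_\alpha=V$ the last-used index is not a maximum (we can always split the final cyclic module further, or handle the tail using that $\dim V$ is uncountable so the cofinality of $D$ is $> \omega$, letting us re-index without a top element). Once $\PAplus$ and $\PM$ are secured, the stratification is good by definition and we are done. I would organize the proof around (1) the recursion scheme, (2) the "proper submodules have dimension $< \dim V$" estimate using uncountability, (3) the case analysis ensuring $n_\alpha \geq 2$ at predecessor/minimum stages using the no-dominant-eigenvalue hypothesis to rule out codimension-$1$ obstructions, and (4) a final cosmetic adjustment of the index set to kill any maximum, after which Corollary \ref{goodstrattoelementary} finishes Theorem \ref{alphaelementarilyTheo} in the uncountable case.
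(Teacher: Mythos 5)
Your general strategy matches the paper's: a transfinite recursion that at each stage picks a new cyclic piece, a dimension estimate $\dim W_\alpha < \dim V$ obtained from uncountability, and the no-dominant-eigenvalue hypothesis to produce a vector $y$ with $(y,u(y))$ linearly independent modulo $W_\alpha$ (the correct formulation is that $u$ cannot induce a scalar multiple of the identity on $V/W_\alpha$, not just that $V/W_\alpha$ is large; your tentative fix ``enlarge the choice so that $tx_\alpha\notin\F x_\alpha+W_\alpha$'' only works because of this, not because $\dim V/W_\alpha\ge 2$). However, there is a genuine gap in how you handle exhaustion and $\PM$ simultaneously with $\PAplus$.

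You never give a concrete mechanism guaranteeing that $\sum_\beta E_\beta$ eventually equals $V$. If at every step you insist on $n_\alpha\ge 2$, there is no reason the union of the strata ever sweeps up a prescribed basis of $V$; conversely, if you force certain basis vectors in, you risk $n_\alpha=1$ at a step where $\PAplus$ forbids it. Your remark that ``each $n_\alpha$ is either $2$ or $+\infty$'' is actually unattainable under those constraints, and ``a final cosmetic adjustment of the index set to kill any maximum'' is not a real argument. The paper's key device, which your sketch lacks, is to index the recursion by the \emph{cardinal} $\kappa=\dim V$ itself (so the index set automatically has no maximum, giving $\PM$ for free) and to fix a basis of $V$ indexed by the \emph{limit ordinals} in $\kappa$. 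Property $\PAplus$ only constrains $n_\alpha$ at the minimum and at successor ordinals; at limit ordinals it imposes nothing. So one inserts the basis vector $e_\alpha$ at limit steps $\alpha$ (possibly with $n_\alpha=1$, harmlessly) to guarantee exhaustion, and uses the no-dominant-eigenvalue argument to get $n_\alpha\ge 2$ only at step $0$ and at successors. The uncountability of $\kappa$ is used twice: to ensure there are $\kappa$ many limit ordinals below $\kappa$ (so a basis can be indexed by them), and to keep $\dim W_\alpha<\kappa$ at every stage (each $E_\beta$ is at most countable and there are $|\alpha|<\kappa$ of them). Your worry about reaching a codimension-$1$ stage is actually moot once one has $\dim W_\alpha<\kappa$: the codimension is then $\kappa$, never finite. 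Without the limit-ordinal trick, your recursion has no termination guarantee compatible with $\PAplus$, and this is the step you would need to fill in.
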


\begin{proof}
Denote by $\kappa$ the dimension of $V$: it is a cardinal.
Since $\kappa$ is uncountable, the set $L$ consisting of the limit ordinals in $\kappa$
has cardinality $\kappa$, whence we can choose a basis $(e_\alpha)_{\alpha \in L}$ of the $\F$-vector space $V$.

The construction is now done by transfinite induction.
Let $\alpha \in \kappa$, and assume that we have constructed partial sequences
$(E_\beta)_{\beta< \alpha}$, $(x_\beta)_{\beta < \alpha}$ and $(n_\beta)_{\beta < \alpha}$ of, respectively,
linear subspaces, nonzero vectors, and elements of $\N^* \cup \{+\infty\}$, such that:
\begin{enumerate}[(i)]
\item For all $\beta <\alpha$, $E_\beta$ is a linear subspace of $V$
and $\bigl(u^k(x_\beta)\bigr)_{0 \leq k<n_\beta}$ is a basis of it;
\item The vector spaces $E_\beta$, for $\beta<\alpha$, are linearly disjoint;
\item For all $\beta <\alpha$, if $n_\beta <+\infty$ then $u^{n_\beta}(x_\beta) \in \underset{\gamma \leq \beta}{\bigoplus} E_\gamma$;
\item For every $\beta<\alpha$, if $\beta\not\in L$ then $n_\beta \geq 2$, otherwise $e_\beta \in \underset{\gamma \leq \beta}{\sum} E_\gamma$.
\end{enumerate}
Set $W:=\underset{\beta <\alpha}{\bigoplus} E_\beta$. By properties (i) and (iii), the linear subspace $W$ is stable under $u$.

We claim that the endomorphism $\overline{u}$ induced by $u$ on $V/W$ is not a scalar multiple of identity.
If it were, there would be a scalar $\lambda$ such that $\im(u-\lambda \id_V) \subset W$.
However, since $\kappa$ is a cardinal, $\{\beta \in \kappa: \beta<\alpha\}$ has its cardinality less than $\kappa$,
and since each $E_\beta$ has its dimension countable or finite, this yields $\dim W < \kappa$.
Hence, $\lambda$ would be a dominant eigenvalue of $u$, in contradiction with our assumptions.
By the classical characterization of the scalar multiples of the identity among the endomorphisms,
there exists a vector $y \in V$ such that $(y,u(y))$ is linearly independent modulo $W$.

Now, we put $x_\alpha:=e_\alpha$ if $\alpha \in L$ and $e_\alpha \not\in W$, otherwise $x_\alpha:=y$.
In any case, we take $n_\alpha$ as the supremum of the set of all $k \in \N$ for which $(u^i(x_\alpha))_{0 \leq i<k}$ is linearly independent modulo $W$,
and
$$E_\alpha:=\Vect\bigl(u^i(x_\alpha)\bigr)_{0 \leq i<n_\alpha.}$$
By the very definition of $x_\alpha$, we have $n_\alpha \geq 2$ if $\alpha \not\in L$, and
$x_\alpha \in \underset{\beta \leq \alpha}{\sum} E_\beta$ otherwise.
It is then easily checked that the spaces $E_\beta$, for $\beta\leq \alpha$,
are linearly disjoint and that if $n_\alpha<+\infty$ then $u^{n_\alpha}(x_\alpha) \in \underset{\beta \leq \alpha}{\bigoplus} E_\beta$.

The inductive step is now achieved.
By property (iv) above, the subspace $\underset{\beta \in \kappa}{\sum} E_\beta$ contains all the basis vectors $e_\alpha$ with $\alpha \in L$,
and hence $V=\underset{\beta \in \kappa}{\sum} E_\beta$. For $\alpha \in \kappa$, set $V_\alpha:=\underset{\beta \leq \alpha}{\bigoplus} E_\beta$.
Then, one sees from properties (i) to (iv) that $(V_\alpha)_{\alpha \in \kappa}$ is a good stratification of
$V^u$.
\end{proof}

To further illustrate the specificity of the uncountable-dimensional case, we
give an example when $V^u$ has no good stratification whereas $u$ has no dominant eigenvalue.

\begin{ex}\label{exampleofnogoodstrat}
Consider the $\F[t]$-module $V:=\F[t] \times \bigl(\F[t]/(t)\bigr)$,
and consider the vectors $e:=(1,0)$ and $f:=(0,1)$ in $V$, so that $V=\F[t]e \oplus \F[t] f$.
Assume that $V$ has a good stratification $(V_\alpha)_{\alpha \in D}$, and denote by $m$
the least element of $D$. Let $x$ be a generator of $V_m$.
Since $\dim V_m \geq 2$, we have $x \not\in \F[t] f$. Hence,
$x=p(t)\,e+\lambda f$ for some $p(t)\in \F[t] \setminus \{0\}$ and some $\lambda \in \F$.
The degree $d$ of $p(t)$ is non-negative, and it is easily checked that $V_m=\F[t] x$ does not contain $f$.
Hence, $m$ has a successor in $D$, which we denote by $m+1$.
Moreover, it is obvious that the respective classes of $f,e,t\,e,\dots,t^{d-1}e$ generate the vector space $V/V_m$, and hence
$V/V_m$ is a non-zero $\F$-vector space with finite dimension. Hence, $D$ must be finite, and $(V_\alpha)_{\alpha \in D}$
fails to be a good stratification since it does not satisfy condition $\PM$.

With a similar strategy, one can prove that $\F[t] \times (\F[t]/(t))^2$
has no stratification that satisfies both conditions $\PA$ and $\PM$.
\end{ex}

\section{Endomorphisms with no dominant eigenvalue: The countable-dimensional case}\label{countablesection}

In this section, we shall complete the proof of Theorem \ref{alphaelementarilyTheo} by tackling the special
case of vector spaces with countable dimension.
Here, the situation is far more complicated than the one of the preceding section because, for an endomorphism $u$ with no dominant eigenvalue,
the module $V^u$ might not have a good stratification (see Example \ref{exampleofnogoodstrat}).

We shall start by considering the case when $V^u$ is a torsion $\F[t]$-module,
and we will show in this situation (and still assuming that $u$ has no dominant eigenvalue)
that it must have a good stratification (Section \ref{torsioncase}).
In Section \ref{nontorsioncase}, we will complete the proof by tackling the case
when $V^u$ is not a torsion $\F[t]$-module, with the help of some results from the torsion case.

\subsection{The case of a torsion $\F[t]$-module}\label{torsioncase}

Our aim is to prove the following result:

\begin{prop}\label{torsiongoodstrat}
Let $u$ be an endomorphism of a vector space $V$ with countable dimension.
Assume that $u$ has no dominant eigenvalue and that $V^u$ is a torsion $\F[t]$-module.
Then, $V^u$ has a good stratification.
\end{prop}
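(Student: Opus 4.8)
The plan is to construct a good stratification of $V^u$ by transfinite recursion. Since $V$ has countable dimension, $V^u$ is a countably generated torsion $\F[t]$-module; fix a generating sequence $(g_n)_{n\in\N}$ of $V^u$, and, for the scalars $\lambda$ that will turn out to be relevant, fix a basis of the subspace $(u-\lambda\,\id_V)(V)$, which is infinite-dimensional precisely because $\lambda$ is not a dominant eigenvalue. We shall build, along a countable ordinal $D$ with no greatest element — which will have the shape $\omega\cdot\delta$ for a countable ordinal $\delta$ — an increasing chain of submodules $(W_\gamma)_{\gamma\in D}$ and strata $V_\gamma$ with $W_{\gamma+1}=V_\gamma=W_\gamma+\F[t]x_\gamma$, so that each $V_\gamma/W_\gamma$ is monogenous and non-zero, its $\F$-dimension is at least $2$ whenever $\gamma$ is a successor or the minimum of $D$, every $g_n$ lies in some $W_\gamma$, and $\bigcup_{\gamma\in D}W_\gamma=V^u$; this will be a good stratification.

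The hypothesis of no dominant eigenvalue enters through the remark that if $W\subsetneq V^u$ is a submodule on which $u$ does not induce a scalar multiple of the identity, then $V^u/W$ contains a monogenous submodule of $\F$-dimension at least $2$ (choose $x$ with $x$ and $u(x)$ linearly independent modulo $W$, exactly as in the argument of Section~\ref{uncountablesection}). Since $u$ induces $\lambda\,\id$ on $V^u/W$ if and only if $(u-\lambda\,\id_V)(V)\subseteq W$, and the latter subspace is infinite-dimensional for every $\lambda$, such a stratum of dimension $\geq 2$ is available whenever $W$ is finite-dimensional. Two complications must be handled. First, $W_\gamma$ becomes infinite-dimensional as soon as $\gamma\geq\omega$, and then $V^u/W_\gamma$ might be a scalar multiple of the identity, for the (necessarily unique, since $W_\gamma\neq V^u$) scalar $\lambda$ with $(u-\lambda\,\id_V)(V)\subseteq W_\gamma$; we avoid this by always keeping part of the fixed basis of $(u-\lambda\,\id_V)(V)$ outside $W_\gamma$ until the construction is finished. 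Second, some elements of $V^u$ lie only in one-dimensional monogenous submodules of all the relevant quotients — these are the ``dead-weight'' elements coming from $\F[t]/(t-\lambda)$ direct summands of $V^u$ — and $\PAplus$ permits one-dimensional strata only at limit ordinals, so a limit-ordinal slot of $D$ is reserved for each such piece.

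Concretely, the recursion is organized in blocks indexed by $\delta$: at the beginning of block $n$ (a limit ordinal of $D$, except for the minimum, where a dimension-$\geq 2$ stratum is used instead) we absorb one remaining one-dimensional dead-weight piece, if any, and then at the successor ordinals of that block we append monogenous strata of dimension $\geq 2$, each chosen so as to bring one more generator $g_n$ and more of the non-reserved part of $V^u$ inside $W_\gamma$, while always leaving some of the reserved basis of $(u-\lambda\,\id_V)(V)$ unabsorbed. One then checks that $(V_\gamma)_{\gamma\in D}$ is a stratification satisfying $\PAplus$ and $\PM$, hence is good. The hard part is exactly this bookkeeping: we must simultaneously guarantee that $V^u$ gets exhausted, that every one-dimensional stratum sits at a limit ordinal, and that at every successor ordinal the quotient $V^u/W_\gamma$ is still non-scalar so a dimension-$\geq 2$ stratum exists — that is, that the construction never gets stuck. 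We expect this to be smoothest after first reducing via the primary decomposition $V^u=\bigoplus_p T_p$, which isolates the eigenvalues of $u$ and pins down the dead weight, and possibly after splitting off a convenient submodule through Lemma~\ref{towerofstrat} in order to treat separately the divisible part, the $p$-primary parts with $\deg p\geq 2$, and a direct sum of cyclic modules.
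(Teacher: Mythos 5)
Your plan correctly identifies the obstacles — one-dimensional ``dead-weight'' pieces coming from $\F[t]/(t-\lambda)$ direct summands, the need for dimension-$\geq 2$ strata at successors and at the minimum, and the danger that $u$ eventually induces a scalar multiple of the identity on $V^u/W_\gamma$ — but it explicitly defers the crucial step, that the recursion never gets stuck, and that step is not routine. The problematic scalar $\lambda$ with $(u-\lambda\,\id_V)(V)\subseteq W_\gamma$ changes as $W_\gamma$ grows, so one cannot reserve in advance a basis of ``the'' relevant $(u-\lambda\,\id_V)(V)$; moreover, after absorbing one dimension-$\geq 2$ stratum the new quotient can become (nearly) scalar, so a one-stratum-at-a-time scheme is not self-sustaining, and simply knowing ``the quotient is not a scalar multiple of the identity'' is not strong enough to keep going indefinitely.

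The paper's proof sidesteps this bookkeeping entirely via a \emph{reductio ad absurdum}. Assume $V^u$ has no good stratification, fix an $\F$-basis $(e_n)_{n\in\N}$, and build by ordinary $\N$-indexed induction an increasing chain $(V_n)$ of submodules with each $V_n/V_{n-1}$ monogenous of finite $\F$-dimension $>1$ and $e_n\in V_{2n+1}$. The induction step rests on a dichotomy for the induced endomorphism $\overline{u}$ on $V/V_{2n-1}$: either $(\overline{u}-\lambda\,\id)^2=0$ for some $\lambda$, in which case Lemma~\ref{index2goodstrat} (which handles exactly the nilpotency-index-$2$ case via an $\N^2$-lexicographic stratification, parking dimension-$1$ pieces at limit ordinals — the very trick you envision) combined with Lemma~\ref{towerofstrat} already yields a good stratification of $V^u$, contradicting the assumption; or there is no such $\lambda$, and then Lemma~\ref{twosubmoduleslemma} supplies \emph{two nested} monogenous strata of finite $\F$-dimension $>1$, the second containing $\overline{e_n}$. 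That lemma is the device handling your dead-weight elements: in its most delicate case a problematic eigenvector $x_0$ is deliberately merged with $u(y)$, for a suitable $y$ with $u^2(y)\neq 0=u^3(y)$, so that $\F[t](u(y)+x_0)$ has dimension $2$. You should adopt this dichotomy (is there $\lambda$ with $(\overline{u}-\lambda\,\id)^2=0$?) together with the two-stratum lemma; it converts your open-ended transfinite bookkeeping into a finite, local argument indexed by $\N$.
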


Combining this result with Corollary \ref{goodstrattoelementary} obviously yields
Theorem \ref{alphaelementarilyTheo} in the special case when $V^u$ has countable dimension over $\F$ and is a torsion module.

For its proof, we need two lemmas.

\begin{lemma}\label{index2goodstrat}
Let $u$ be an endomorphism of a vector space $V$ with countable dimension.
Assume that $u$ has no dominant eigenvalue and that $(u-\lambda \id_V)^2=0$
for some $\lambda \in \F$. Then, $V^u$ has a good stratification.
\end{lemma}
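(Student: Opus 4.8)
The plan is to reduce to the case of a square-zero endomorphism and then to assemble a good stratification out of the $\F[t]/(t^2)$-part and the $\F[t]/(t)$-part of the torsion module $V^u$, the delicate point being that the latter part can only be accommodated at limit ordinals. First I would replace $u$ by $u-\lambda\,\id_V$: the unital subalgebra of $\End(V)$ generated by $u$ coincides with the one generated by $u-\lambda\,\id_V$, so $V^u$ and $V^{u-\lambda\,\id_V}$ have the same submodules and the same monogenous submodules, and any stratification of one is a stratification of the other with the same dimension sequence; since moreover $u$ has no dominant eigenvalue if and only if $u-\lambda\,\id_V$ has none, and $(u-\lambda\,\id_V)^2=0$, no generality is lost in assuming $u^2=0$, whence $\im u\subseteq\Ker u$. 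Now for $\mu\in\F\setminus\{0\}$ the endomorphism $u-\mu\,\id_V$ is bijective (it satisfies $(u-\mu\,\id_V)(u+\mu\,\id_V)=-\mu^2\,\id_V$), so the only possible dominant eigenvalue of $u$ is $0$; as $u$ has none, $\rk u=\dim V$, which is countably infinite, and therefore $\dim\im u=\dim\Ker u=\aleph_0$ (the latter because $\im u\subseteq\Ker u\subseteq V$). I then fix a decomposition $V=S\oplus\im u\oplus T$, where $S$ is a complementary subspace of $\Ker u$ in $V$ and $T$ is a complementary subspace of $\im u$ in $\Ker u$; thus $u$ vanishes on $\im u\oplus T$ and maps $S$ isomorphically onto $\im u$, and I choose bases $(s_i)_{i\in\N}$ of $S$ and $(t_j)_{j\in J}$ of $T$, with $J$ finite or equal to $\N$.

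The elementary remark underlying everything is that a non-zero monogenous $\F[t]$-module killed by $t^2$ is isomorphic either to $\F[t]/(t)$ or to $\F[t]/(t^2)$; hence any stratum $V_\alpha/\sum_{\beta<\alpha}V_\beta$ of a stratification of $V^u$ has $n_\alpha\in\{1,2\}$, and $n_\alpha=1$ is compatible with property $\PAplus$ only when $\alpha$ is a limit ordinal different from the minimum. In particular, if $u^2=0$, $\im u=\Ker u$ and $\dim\im u=\aleph_0$ — equivalently $V^u\cong(\F[t]/(t^2))^{(\aleph_0)}$ — then $V^u$ has a good stratification: writing a basis of $V$ as $(z_i,u(z_i))_{i\in\N}$ and setting $V_n:=\Vect\bigl(z_0,u(z_0),\dots,z_n,u(z_n)\bigr)$ for $n\in\N$, one obtains an increasing sequence of submodules with each successive quotient isomorphic to $\F[t]/(t^2)$, whose union is $V$ and whose index set $\N$ has no maximum, so that $\PAplus$ and $\PM$ hold. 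When $T=0$ this already settles the lemma.

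Suppose now $T\neq 0$, and apply Lemma \ref{towerofstrat} to the submodule $W:=T\oplus\bigoplus_{i\in\N}\Vect\bigl(s_{2i+1},u(s_{2i+1})\bigr)$ of $V^u$. On one side, $V/W$ has a basis consisting of the classes of the $s_{2i}$ and $u(s_{2i})$ ($i\in\N$), the induced endomorphism $\overline u$ of $V/W$ satisfies $\overline u^2=0$ with $\Ker\overline u=\im\overline u=\Vect\bigl(\overline{u(s_{2i})}:i\in\N\bigr)$ of dimension $\aleph_0$, hence $V/W\cong(\F[t]/(t^2))^{(\aleph_0)}$ and so, by the previous paragraph, $V/W$ has a good stratification. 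On the other side, $W\cong(\F[t]/(t^2))^{(\aleph_0)}\oplus(\F[t]/(t))^{(\card J)}$ with $1\le\card J\le\aleph_0$, and I build a stratification of $W$ satisfying $\PAplus$ as follows: I choose for its index set the ordinal $\omega\cdot(1+\card J)$ if $J$ is finite and the ordinal $\omega^2$ if $J=\N$; in both cases this ordinal has exactly $\card J$ limit ordinals above its minimum and countably many ordinals that admit a predecessor. I place the $\card J$ vectors $t_j$, each as a one-dimensional stratum, at distinct non-minimal limit ordinals, and I place the countably many blocks $\Vect\bigl(s_{2i+1},u(s_{2i+1})\bigr)$, each as a two-dimensional $\F[t]/(t^2)$-stratum, at the minimal ordinal and at the ordinals that admit a predecessor. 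Every stratum is monogenous and non-zero, one has $n_\alpha=2$ at the minimum and at every ordinal with a predecessor, and $n_\alpha=1$ only at certain non-minimal limit ordinals, so property $\PAplus$ holds. Lemma \ref{towerofstrat} then yields that $V^u$ has a good stratification, which completes the proof.

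I expect the \emph{main obstacle} to be conceptual rather than computational: one must notice that the $\F[t]/(t)$-summands of $V^u$ cannot be absorbed into successor strata (a monogenous module killed by $t^2$ of dimension $\geq 2$ is forced to be $\F[t]/(t^2)$), so they have to be parked at limit ordinals — which is precisely why splitting them off, together with half of the square-zero blocks, as the sub-object $W$ in Lemma \ref{towerofstrat}, and endowing $W$ with an index set rich in limit ordinals, is the right manoeuvre. The remaining verifications — that $V/W$ is isomorphic to $(\F[t]/(t^2))^{(\aleph_0)}$ and that the prescribed families really are stratifications with the announced dimension sequences — amount to routine bookkeeping with bases and ordinals.
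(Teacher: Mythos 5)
Your proof is correct, and the underlying combinatorial idea is the same as the paper's: the one-dimensional eigenstrata must be ``parked'' at non-minimal limit ordinals, and the torsion-free count of two-dimensional blocks (forced by $\lambda$ not being a dominant eigenvalue, hence $\rk(u-\lambda\id_V)=\dim V=\aleph_0$) supplies enough material to fill all the remaining positions. What differs is the implementation. The paper does not normalize $\lambda$ to $0$ and does not invoke Lemma \ref{towerofstrat}: it writes $V=\bigoplus_{i\in I}\Vect(e_i)\oplus\bigoplus_{j\in J}\Vect(f_j,u(f_j))$ with the $e_i$ eigenvectors for $\lambda$ and the $f_j$ not, observes that $J$ is countably infinite, and then (after a relabelling) indexes a single vector sequence $(z_{k,l})$ over $\N^2$ with the lexicographic order — which is $\omega^2$ — placing the $e_i$ at positions in $(\N\setminus\{0\})\times\{0\}$, i.e.\ at non-minimal limit ordinals, and the $f_j$ everywhere else; the resulting sequence directly defines a good stratification of $V^u$. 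Your route instead first reduces to $u^2=0$, then splits the space into a submodule $W$ carrying all the $\F[t]/(t)$-factors together with half the $\F[t]/(t^2)$-blocks, and a quotient $V/W\cong(\F[t]/(t^2))^{(\aleph_0)}$, equips each with its own stratification, and glues them via Lemma \ref{towerofstrat}. Both are correct; yours is a bit longer because the splitting into $W$ and $V/W$ is not strictly necessary — as the paper shows, one can index the entire space over $\omega^2$ in a single sweep, the two constituent families furnishing exactly the strata that property $\PAplus$ requires. On the other hand, your version illustrates Lemma \ref{towerofstrat} concretely and makes explicit the role of the finite case via $\omega\cdot(1+\card J)$, which is a pleasant gain in clarity even if it costs a few more lines.
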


\begin{proof}
Since $(u-\lambda \id_V)^2=0$,
there are families $(e_i)_{i \in I}$ and $(f_j)_{j \in J}$ of non-zero vectors
such that
$$V=\underset{i \in I}{\bigoplus} \Vect(e_i) \oplus \underset{j \in J}{\bigoplus} \Vect\bigl(f_j,u(f_j)\bigr),$$
with $e_i \in \Ker (u-\lambda \id_V)$ for all $i \in I$, and $f_j \not\in \Ker (u-\lambda \id_V)$ for all $j \in J$.

Since $\lambda$ is not a dominant eigenvalue of $u$, the set $J$ is infinite, and hence countable.
We choose a subset $A$ of $\N \setminus \{0\}$ with the same cardinality as $I$, and we put
$I':=A \times \{0\}$ and $J':=\N^2 \setminus I'$, so that $J'$ is equipotent to $J$.
Hence, without loss of generality we can assume that $(I,J)$ is a partition of $\N^2$ and $I \subset (\N \setminus \{0\})\times \{0\}$.
Then, for $(k,l) \in \N^2$, we put $z_{k,l}:=e_{k,l}$ if $(k,l) \in I$, and $z_{k,l}:=f_{k,l}$ otherwise.
The set $\N^2$ is well-ordered by the lexicographic ordering. Then, one
checks that the vector sequence $(z_{k,l})_{(k,l)\in \N^2}$ defines a good stratification of $V^u$.
\end{proof}

\begin{lemma}\label{twosubmoduleslemma}
Let $u$ be an endomorphism of a vector space $V$ with infinite dimension.
Assume that $u$ has no dominant eigenvalue, that
there is no scalar $\lambda$ such that $(u-\lambda \id_V)^2=0$, and that
$V^u$ is a torsion $\F[t]$-module.
Let $x_0$ be a vector of $V$.
Then, there are submodules $V_0$ and $V_1$ of $V^u$ such that:
\begin{enumerate}[(i)]
\item $V_0 \subset V_1$;
\item $V_1$ contains $x_0$;
\item $\dim_\F V_0 >1$ and $\dim_\F (V_1/V_0)>1$;
\item Each module $V_0$ and $V_1/V_0$ is monogenous.
\end{enumerate}
\end{lemma}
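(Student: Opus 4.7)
My plan is to find vectors $y, z \in V$ and set $V_0 := \F[t]\,y$ and $V_1 := V_0 + \F[t]\,z$, so that monogeneity of both modules is automatic. The remaining requirements then become $\dim_\F \F[t]\,y \geq 2$, $x_0 \in V_0 + \F[t]\,z$, and the class $\bar z$ of $z$ in $V^u/V_0$ has minimal polynomial of degree $\geq 2$. A preliminary observation guides the construction: since no $\lambda$ satisfies $(u - \lambda\,\id_V)^2 = 0$, the operator $u$ is not a scalar multiple of $\id_V$, so some element of $V^u$ has minimal polynomial of degree at least $2$. Moreover, quotienting $V^u$ by any finite-dimensional submodule preserves the ``no dominant eigenvalue'' property (because $V$ is infinite-dimensional), hence yields an induced operator that is likewise not a scalar multiple of identity; I will use this repeatedly.

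I would split the argument by the degree of the minimal polynomial $p_0$ of $x_0$ in $V^u$. In the generic case $\deg p_0 \geq 2$, the target is $V_1 := V_0 \oplus \F[t]\,x_0$, which gives $V_1/V_0 \simeq \F[t]\,x_0$ of $\F$-dimension $\deg p_0 \geq 2$, and the entire task reduces to exhibiting $y$ with $\F[t]\,y \cap \F[t]\,x_0 = \{0\}$ and $\dim_\F \F[t]\,y \geq 2$. To produce such a $y$, I would work in the quotient $\bar V := V/\F[t]\,x_0$, pick a class $\bar y$ with minimal polynomial $p_y$ of degree $\geq 2$ (available by the preliminary observation), and then correct an arbitrary lift $y'$ of $\bar y$ into $y := y' - g(u)\,x_0$ so that $p_y(u)\,y = 0$. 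Writing $p_y(u)\,y' = h(u)\,x_0$ with $h \in \F[t]$, this amounts to solving $p_y \cdot g \equiv h \pmod{p_0}$ in $\F[t]$, which succeeds iff $\gcd(p_y, p_0)$ divides $h$; the abundance of candidates $\bar y$ should allow arranging this divisibility condition.

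The harder case is $\deg p_0 = 1$, where $u\,x_0 = \lambda\,x_0$ for some $\lambda$; then $V_0 + \F[t]\,x_0$ exceeds $V_0$ by dimension at most $1$, so $z$ must genuinely enlarge the module. Here I would invoke the hypothesis $(u - \lambda\,\id_V)^2 \neq 0$ to obtain a vector $w \in V$ with $(u - \lambda\,\id_V)^2 w \neq 0$, and aim to construct $z$ such that $(\bar u - \lambda\,\id)\bar z = \bar x_0$ in $V^u/V_0$: this forces $\bar z$ to have minimal polynomial $(t - \lambda)^2$ and ensures $\F[t]\,\bar z \ni \bar x_0$, delivering both remaining conditions at once. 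The relation reduces to requiring $x_0 \in \im(u - \lambda\,\id_V) + V_0$, and the no-dominant-eigenvalue hypothesis ensures that $\im(u - \lambda\,\id_V)$ is large enough for this to be arranged. The main obstacle I expect is the simultaneous coordination in this eigenvector case: $V_0$ must be at least two-dimensional and generically positioned so that the coset of $x_0$ modulo $V_0$ falls inside the image of $\bar u - \lambda\,\id$, which will require a delicate joint choice of $y$ and $z$ leveraging both structural hypotheses together.
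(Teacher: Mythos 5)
There are genuine gaps in both branches of your case division, and you have also overlooked a much shorter route through the easy branch. When $\deg p_0\geq 2$, there is no need to look for a cyclic complement $\F[t]y$ to $\F[t]x_0$: simply take $V_0:=\F[t]x_0$, which already contains $x_0$ and has $\F$-dimension $\deg p_0\geq 2$. Since $V_0$ is finite-dimensional and $V$ is infinite-dimensional, the induced endomorphism $\bar u$ on $V/V_0$ still has no dominant eigenvalue, hence is not a scalar multiple of the identity, so one picks a non-eigenvector $\bar z$ there and sets $V_1:=\pi^{-1}(\F[t]\bar z)$, which has all the required properties. Your more elaborate plan (lift a suitable $\bar y$ and correct it to split off a cyclic submodule) hinges on the assertion that ``the abundance of candidates $\bar y$ should allow arranging $\gcd(p_y,p_0)\mid h$''; that assertion is not proved and is exactly the content of an $\mathrm{Ext}^1$ computation that you would need to carry out, so this branch is not complete as written.

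The eigenvector case ($u(x_0)=\lambda x_0$, WLOG $\lambda=0$) is where the real work lies, and your framework --- find $\bar z$ with $(\bar u-\lambda)\bar z=\bar x_0$, so that $\bar x_0$ sits inside $\F[t]\bar z$ --- only matches one of the two sub-cases needed. If $u$ is locally nilpotent, the paper picks $y$ with $u^2(y)\neq 0=u^3(y)$ (this uses the hypothesis $(u-\lambda\id_V)^2\neq 0$ for all $\lambda$) and, when $x_0\notin\Vect(y,u(y),u^2(y))$, sets $V_0:=\F[t](u(y)+x_0)$ and $V_1:=\Vect(y,u(y),u^2(y),x_0)$; this instantiates your scheme, since then $\bar u(\bar y)=-\bar x_0$ in $V_1/V_0$. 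But if $u$ is \emph{not} locally nilpotent, the paper uses a completely different trick: take $y\neq 0$ annihilated by an irreducible $p(t)\neq t$, and set $V_0:=\F[t](x_0+y)$, which by coprimality of $t$ and $p(t)$ contains both $x_0$ and $y$ and thus has dimension $\geq 2$; here $x_0$ ends up \emph{inside} $V_0$, so $\bar x_0=0$ in $V_1/V_0$ and your plan to realize $\bar x_0$ as $(\bar u-\lambda)\bar z$ cannot be the mechanism. You do not treat this sub-case, and in the sub-case you do address you acknowledge the key joint choice of $V_0$ and $z$ as an unresolved obstacle. Both points need to be filled in for the argument to be complete.
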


\begin{proof}
We distinguish between several cases. \\
\noindent \textbf{Case 1.} $x_0 \neq 0$ and $x_0$ is not an eigenvector of $u$. \\
Then, we set $V_0:=\F[t] x_0$, which is finite-dimensional as a vector space. Note that the endomorphism of $V/V_0$ induced by $u$
has no dominant eigenvalue, whence some non-zero vector $z \in V/V_0$ is not an eigenvector of it.
Denote by $V_1$ the inverse image of $\F[t]z$ under the canonical projection $V \rightarrow V/V_0$.
Then, $V_1/V_0=\F[t]z$ and $V_1/V_0$ has finite dimension greater than $1$ as a vector space over $\F$.

\noindent \textbf{Case 2.} $x_0=0$ or $x_0$ is an eigenvector of $u$. \\
Without loss of generality, we can assume that $u(x_0)=0$.

\noindent \textbf{Case 2.1.} The endomorphism $u$ is not locally nilpotent, i.e.\
we do not have $\forall x \in V, \; \exists n \in \N : \; u^n(x)=0$. \\
Since $V^u$ is a torsion module, there is a non-zero vector $y$ together with
a monic irreducible polynomial $p(t)\neq t$ such that $p(u)[y]=0$.
Then, $t$ and $p(t)$ are coprime, whence $V_0:=\F[t](x_0+y)$ contains $x_0$ and $y$,
and in particular $\dim_\F V_0 \geq 2$. Then, we find a submodule $V_1 \supset V_0$ as in Case 1.

\noindent \textbf{Case 2.2.} The endomorphism $u$ is locally nilpotent. \\
Our assumptions tell us that $u^2 \neq 0$.
This yields a vector $y \in V$ such that $u^2(y) \neq 0$ and $u^3(y)=0$.
Set $F:=\Vect(y,u(y),u^2(y),x_0)$.

\noindent \textbf{Case 2.2.1.} One has $x_0 \in \Vect\bigl(y,u(y),u^2(y)\bigr)$. \\
Then, we set $V_0:=\F[t] y=F$, and we construct $V_1$ as in Case 1.

\noindent \textbf{Case 2.2.2.} One has $x_0 \not\in \Vect\bigl(y,u(y),u^2(y)\bigr)$. \\
Then, we set $V_0:=\Vect\bigl(u(y)+x_0,u^2(y)\bigr)$ and $V_1:=F$.
Note that $V_0$ is the $\F[t]$-submodule generated by $u(y)+x_0$,
and that $(\overline{x_0},\overline{y})$ is a basis of the quotient space $V_1/V_0$.
Noting that $u(y)=-x_0$ modulo $V_0$, we see that the $\F[t]$-module $V_1/V_0$ is generated by $\overline{y}$.
Hence, $V_0$ and $V_1$ have the expected properties.
\end{proof}

Now, we can prove Proposition \ref{torsiongoodstrat}.

\begin{proof}[Proof of Proposition \ref{torsiongoodstrat}]
By a \emph{reductio ad absurdum}, we assume that $V^u$ has no good stratification.
We choose a basis $(e_n)_{n \in \N}$ of the $\F$-vector space $V$.

Then, by induction, we shall construct a good stratification of $V^u$.
Set $V_{-1}:=\{0\}$.
Let $n \in \N$, and assume that we have constructed a partial increasing sequence $(V_k)_{0 \leq k \leq 2n-1}$ of
submodules of $V^u$ such that $V_k/V_{k-1}$ is finite-dimensional over $\F$ with dimension greater than $1$ for all $k \in \lcro 0,n-1\rcro$,
and $V_{2k+1}$ contains $e_k$ for all $k \in \lcro 0,n-1\rcro$.

Then, we consider the quotient vector space $V/V_{2n-1}$ and the induced endomorphism $\overline{u}$ of it.
Since $V_{2n-1}$ is finite-dimensional as a vector space and $u$ has no dominant eigenvalue, $\overline{u}$ has no dominant
eigenvalue either. Moreover, $V/V_{2n-1}$ is a torsion $\F[t]$-module.

Assume first that there is a scalar $\lambda$ such that $(\overline{u}-\lambda \id)^2=0$.
Then, by Lemma \ref{index2goodstrat}, there is a good stratification of $V/V_{2n-1}$. However,
$(V_k)_{0 \leq k \leq 2n-1}$ is obviously a stratification of $V_{2n-1}$ with property $\PAplus$.
Hence, by Lemma \ref{towerofstrat}, there is a good stratification of $V$, contradicting our assumptions.

Hence, there is no scalar $\lambda$ such that $(\overline{u}-\lambda \id)^2=0$.
By Lemma \ref{twosubmoduleslemma}, there are submodules $W_0 \subset W_1$ of $V/V_{2n-1}$ such that
$W_1$ contains the class of $e_n$ modulo $V_{2n-1}$, and both modules $W_0$ and $W_1/W_0$ are monogenous
and have their dimension over $\F$ finite and greater than $1$.
Then, we define $V_{2n}$ and $V_{2n+1}$ as the respective inverse images of $W_0$ and $W_1$ under the canonical projection
of $V$ onto $V/V_{2n-1}$. Then, $V_{2n+1}/V_{2n}$ and $V_{2n}/V_{2n-1}$ are isomorphic to, respectively,
$W_1/W_0$ and $W_0$, and hence both are monogenous and have their dimension over $\F$ finite and greater than $1$.
Finally, $V_{2n+1}$ contains $e_n$.

Hence, by induction we have constructed an increasing sequence $(V_n)_{n \in \N}$ of submodules of $V$ such that
each quotient module $V_n/V_{n-1}$ is monogenous and has its dimension over $\F$ finite and greater than $1$,
and $V_{2n+1}$ contains $e_n$ for all $n \in \N$. The latter property yields $\sum_{n \in \N} V_n=V$,
and we deduce that $(V_n)_{n \in \N}$ is a good stratification of $V^u$. This completes the proof.
\end{proof}

We finish this section with a basic result on the case of a dominant eigenvalue, to be used in the non-torsion case.

\begin{lemma}\label{splitdominant}
Let $u$ be an endomorphism of a vector space $V$, and assume that there is a scalar $\lambda$ such that
$u-\lambda\,\id_V$ has finite rank. Then,
$V^u$ splits into $V^u=W \oplus H$ in which:
\begin{itemize}
\item $W$ is a finite direct sum of monogenous submodules with dimension over $\F$ finite and greater than $1$;
\item There is a scalar $\mu$ such that $\forall x \in H, \; u(x)=\mu\,x$.
\end{itemize}
\end{lemma}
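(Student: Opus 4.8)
The plan is to split off a finite-dimensional $u$-stable subspace $W$, which will then be further decomposed by the structure theorem for $\F[t]$-modules, and to show that $u$ acts as a scalar on a suitable complement $H$. Write $w:=u-\lambda\,\id_V$, which has finite rank by hypothesis. Two facts are immediate: $\im w$ is stable under $u$ (it is stable both under $\lambda\,\id_V$ and under $w$), and $\Ker w=\Ker(u-\lambda\,\id_V)$ is stable under $u$, with $u(x)=\lambda\,x$ for all $x\in\Ker w$; in particular every linear subspace of $\Ker w$ is a submodule of $V^u$. Since $w$ has finite rank, $\Ker w$ has finite codimension in $V$, so I may pick a finite-dimensional complement $W_0$ of $\Ker w$ and set $W_1:=W_0+\im w$. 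Then $W_1$ is finite-dimensional, stable under $u$ (because $u(W_0)\subset \lambda\,W_0+\im w\subset W_1$ and $\im w$ is $u$-stable), and contains $W_0$, hence $W_1+\Ker w=V$. Choosing any complement $H$ of $W_1\cap\Ker w$ inside $\Ker w$ then yields a $u$-stable decomposition $V=W_1\oplus H$ with $u_{|H}=\lambda\,\id_H$, and $H$ is infinite-dimensional since $\Ker w$ is.

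It remains to arrange that $W_1$ be a finite direct sum of monogenous submodules of $\F$-dimension at least $2$. As a finite-dimensional $\F[t]$-module, $W_1$ is a finite direct sum of monogenous submodules; those of dimension $\geq 2$ need no change. A monogenous summand of dimension $1$ is a line $\F y$ with $u(y)=c\,y$ for some $c\in\F$. If $c=\lambda$, I transfer this line from $W_1$ to $H$, which preserves $u_{|H}=\lambda\,\id_H$. If $c\neq\lambda$, I pick a line $\F x\subset H$ with $u(x)=\lambda\,x$ (possible since $\dim H=\infty$) and replace $\F y$ by $\F y\oplus\F x$, which by the Chinese remainder theorem is isomorphic to $\F[t]/\bigl((t-c)(t-\lambda)\bigr)$, hence monogenous of dimension $2$; at the same time I delete $\F x$ from $H$. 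Only finitely many such corrections occur, and after carrying them all out I obtain $V=W\oplus H$ of the required form, with $\mu:=\lambda$.

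I expect the only substantive step to be the treatment of the dimension-$1$ monogenous summands of $W_1$; everything else is bookkeeping with the structure theorem and the Chinese remainder theorem. The one thing to watch is that $H$ stays large enough to supply the finitely many lines needed in the corrections, which is automatic here because $V$ is infinite-dimensional whereas $\Ker w$ has finite codimension.
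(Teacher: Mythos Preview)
Your argument follows the paper's closely: isolate a finite-dimensional $u$-stable summand $W_1\supset\im w$ with complement $H\subset\Ker w$, decompose $W_1$ by the structure theorem, and repair each one-dimensional monogenous summand either by moving it into $H$ (if its eigenvalue is $\lambda$) or by pairing it with a $\lambda$-eigenline taken from $H$. The paper does the same, except that it uses the invariant-factor decomposition of its $W'$, so that the one-dimensional summands automatically share a \emph{single} eigenvalue $\mu$, and then handles the two cases $\mu=\lambda$ and $\mu\neq\lambda$ in one stroke rather than summand by summand.

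There is, however, a gap: the lemma is stated for an arbitrary vector space $V$ and is later applied (in Lemma~\ref{nontorsiondecomp}) to a quotient $V/F'$ that may be finite-dimensional. You explicitly assume $V$ infinite-dimensional so that $H$ supplies as many $\lambda$-eigenlines as needed; without this the pairing step can fail. For instance, take $V=\F^3$ with $u$ diagonalizable with three pairwise distinct eigenvalues, none equal to the chosen $\lambda$ (the finite-rank hypothesis is vacuous here): then $\Ker w=\{0\}$, hence $H=\{0\}$, and if you decompose $W_1=V$ as three one-dimensional summands you have no lines with which to pair them. The paper disposes of this with a one-line remark at the outset: for finite-dimensional $V$, apply the invariant-factor decomposition directly to $V$; all one-dimensional invariant factors are $\F[t]/(t-\mu)$ for a common scalar $\mu$, and one takes $H$ to be their sum and $W$ to be the sum of the remaining factors. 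Adding this sentence to your proof closes the gap.
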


\begin{proof}
If $V$ is finite-dimensional, the result is an obvious consequence of the classification of finitely generated torsion
$\F[t]$-modules. In the rest of the proof, we assume that $V$ is infinite-dimensional.
Set $w:=u-\lambda \id_V$.

Let us choose a finite-dimensional linear subspace $W'$ of $V$ such that $\im w \subset W'$ and $W'+\Ker w=V$.
Let us choose a complementary subspace $H'$ of $W'$ in $V$ such that $H' \subset \Ker w$.
Note that $\forall x \in H', \; u(x)=\lambda x$.
Since $W'$ includes $\im w$ it is a submodule of $V^u$.
By the classification of finitely generated $\F[t]$-modules, there is a scalar $\mu$ together with a splitting
$$W'=E \oplus \underset{i=1}{\overset{n}{\bigoplus}} W_i$$
in which $\forall x \in E, \; u(x)=\mu \,x$ and each submodule $W_i$ is monogenous with (finite) dimension greater than $1$.

Then, there are two cases to consider.
\begin{itemize}
\item If $\mu=\lambda$ then we take $H:=E \oplus H'$ and $W:=\underset{i=1}{\overset{n}{\bigoplus}} W_i$.

\item Assume that $\mu \neq \lambda$. Then, we choose a basis $(e_1,\dots,e_m)$ of the $\F$-vector space $E$ and
then a linearly independent $m$-tuple $(f_1,\dots,f_m)$ of vectors of $H'$, and we re-split $H'=\Vect(f_1,\dots,f_m) \oplus H$
for some linear subspace $H$. Then, we set $W:=\underset{i=1}{\overset{n}{\bigoplus}} W_i \oplus \underset{i=1}{\overset{m}{\bigoplus}}
\Vect(e_i,f_i)$ and we note that $\Vect(e_i,f_i)=\F[t](e_i+f_i)$ is monogenous with dimension $2$ for all $i \in \lcro 1,m\rcro$.
\end{itemize}
\end{proof}

\subsection{The case of non-torsion $\F[t]$-modules}\label{nontorsioncase}

\begin{Def}
Let $V$ be an $\F[t]$-module and $F$ be a free submodule of $V$.
We say that $F$ is \textbf{quasi-maximal} if $V/F$ is a torsion module.
\end{Def}

Equivalently, $F$ is quasi-maximal if and only if there is no non-zero free submodule $F'$ of $V$ such that $F \cap F'=\{0\}$.
Beware that a quasi-maximal free submodule may not be maximal among the free submodules:
for example, in the $\F[t]$-module $\F[t]$, the free submodule $t\F[t]$ is quasi-maximal but it is not a maximal free submodule.

To construct a quasi-maximal free submodule of $V$, it suffices to take
a maximal $\F[t]$-independent subset $A$ of $V$ (which exists thanks to Zorn's lemma)
and to consider the free module $\underset{x \in A}{\sum} \F[t] x$.

In order to prove Theorem \ref{alphaelementarilyTheo} in the remaining case
when $V^u$ is not a torsion module and the dimension of $V$ is countable, the main step consists in the following decomposition of a non-torsion $\F[t]$-module devoid of a good stratification.

\begin{lemma}\label{nontorsiondecomp}
Let $V$ be an $\F[t]$-module. Assume that $V$ is not a torsion module, that $V$ has countable dimension as a vector space over $\F$,
and that $V$ has no good stratification.
Then, there exist submodules $F \subset W$ of $V$ together with a scalar $\lambda$ such that:
\begin{enumerate}[(a)]
\item $F$ is a non-zero free submodule of $V$;
\item $W/F$ has finite dimension over $\F$ and, if nonzero, has a stratification that satisfies condition $\PAplus$;
\item There is a splitting $V=W \oplus H$ such that $\forall x \in H, \; t\,x=\lambda\,x$.
\end{enumerate}
\end{lemma}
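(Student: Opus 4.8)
The plan is to reduce to the torsion case already treated in Proposition~\ref{torsiongoodstrat} and Lemma~\ref{splitdominant}, using a quasi-maximal free submodule, and then to read off the decomposition from the fact that the torsion quotient must have a dominant eigenvalue.

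First, since $V$ is not a torsion module, a maximal $\F[t]$-independent subset $A$ of $V$ (which exists by Zorn's lemma) yields a non-zero free submodule $F:=\sum_{x\in A}\F[t]\,x$ such that $V/F$ is a torsion $\F[t]$-module, of countable dimension over $\F$. As $F\neq\{0\}$, the stratification of $F$ attached to a basis of it has all its dimensions equal to $+\infty$, hence satisfies $\PAplus$. Consequently, if $V/F$ had no dominant eigenvalue, then Proposition~\ref{torsiongoodstrat} would provide a good stratification of $V/F$, and Lemma~\ref{towerofstrat} applied with the submodule $F$ would produce a good stratification of $V$, contrary to our assumption. So $V/F$ has a dominant eigenvalue, and, since $V/F$ is a torsion module of countable dimension, multiplication by $t$ on $V/F$ equals $\lambda\,\id+w$ for some $\lambda\in\F$ and some finite-rank endomorphism $w$.

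Next, I apply Lemma~\ref{splitdominant} to the module $V/F$: it splits as $V/F=\overline{W}\oplus\overline{H}$, where $\overline{W}$ is a finite direct sum of monogenous submodules of finite $\F$-dimension greater than $1$ --- so the stratification of $\overline{W}$ attached to this decomposition satisfies $\PAplus$ --- and $\overline{H}$ is annihilated by $t-\mu$ for some $\mu\in\F$. Let $\widetilde{W}$ and $H$ be the inverse images of $\overline{W}$ and $\overline{H}$ in $V$, so that $V=\widetilde{W}+H$ with $\widetilde{W}\cap H=F$, $\widetilde{W}/F\simeq\overline{W}$ and $H/F\simeq\overline{H}$; in particular $F\subseteq H$. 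Since $\overline{H}$ is killed by $t-\mu$, we have $(t-\mu)H\subseteq F$, so $\varphi\colon x\mapsto(t-\mu)x$ is an $\F[t]$-linear map from $H$ to the free module $F$; its kernel $S:=\{x\in H:\ t\,x=\mu\,x\}$ is a scalar submodule, and its image $\varphi(H)$, being a submodule of a free module over the principal ideal domain $\F[t]$, is free. As the free module $\varphi(H)$ is projective, the exact sequence $0\to S\to H\xrightarrow{\ \varphi\ }\varphi(H)\to 0$ splits, so $H=S\oplus C$ with $C$ free over $\F[t]$; and $C\neq\{0\}$ because $\varphi(H)\supseteq\varphi(F)=(t-\mu)F\neq\{0\}$.

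From here one sets $W:=\widetilde{W}+C$. A direct computation gives $W\cap S=\pi_S(F)$, where $\pi_S$ is the projection of $H=S\oplus C$ onto $S$; choosing a submodule $S^{\circ}$ of $S$ with $S=\pi_S(F)\oplus S^{\circ}$ (possible because scalar modules are semisimple) one obtains $V=W\oplus S^{\circ}$, with $S^{\circ}$ annihilated by $t-\mu$, with $C\subseteq W$ free and non-zero, and with $W/C\simeq\widetilde{W}/(F\cap C)$, an extension of $\overline{W}$ by $\pi_S(F)$. The main obstacle is that, for an arbitrary quasi-maximal free submodule $F$, the scalar module $\pi_S(F)$ need not be finite-dimensional, and even when it is, the extension $W/C$ need not admit a $\PAplus$ stratification; both defects concern only the $\mu$-eigendirection. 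The remedy is to replace $F$ by the free submodule $C$ and repeat the whole construction: one checks that $C$ is again quasi-maximal free, that the dominant eigenvalue of the new torsion quotient $V/C$ is still $\mu$ as long as the scalar part does not vanish, and --- this is the delicate point --- that the associated defect module $(t-\mu)H/(t-\mu)C$ properly shrinks at each step, so that after finitely many replacements (or after passing to the limit) one reaches a quasi-maximal free submodule $F$ with $\pi_S(F)=\{0\}$, i.e.\ $F\subseteq C$; then $W/C\simeq\widetilde{W}/F\simeq\overline{W}$ is finite-dimensional with a $\PAplus$ stratification. (If at some stage the torsion quotient becomes finite-dimensional, Lemma~\ref{splitdominant} applied directly to it yields the conclusion with $H=\{0\}$ and $W=V$.) Taking $F$ and $W$ as just obtained, $\lambda:=\mu$ and $H:=S^{\circ}$, one then gets properties~(a), (b) and (c).
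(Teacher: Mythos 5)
Your opening is sound and follows the same route as the paper: choose a quasi-maximal free submodule $F$ (so $V/F$ is torsion of countable dimension), observe that $V/F$ cannot have a good stratification by Lemma~\ref{towerofstrat}, deduce from Proposition~\ref{torsiongoodstrat} that the induced endomorphism of $V/F$ has a dominant eigenvalue, and apply Lemma~\ref{splitdominant} to split $V/F$ into a scalar part $\overline{H}$ and a finite sum of monogenous modules $\overline{W}$. The reduction up to $H=S\oplus C$ with $C$ free and nonzero is also correct, and your identification of the real obstruction --- that $\pi_S(F)$ need not vanish, nor be finite-dimensional --- is exactly where the difficulty lies.

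The remedy you propose, however, has a genuine gap. You suggest replacing $F$ by $C$, iterating, and arguing that the ``defect module'' $(t-\mu)H/(t-\mu)C$ shrinks; but since $S=\ker\bigl((t-\mu)|_H\bigr)$ and $H=S\oplus C$, one has $(t-\mu)H=(t-\mu)C$, so that quotient is identically zero and cannot measure anything. More fundamentally, there is no well-ordering argument showing that finitely many replacements suffice, and ``passing to the limit'' is not defined for this construction (there is no containment between consecutive free submodules, and an arbitrary intersection of free submodules need not be free, nonzero, or quasi-maximal). As written, the iteration does not close the argument.

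The paper avoids iteration by making a single, carefully chosen enlargement of $F'$ rather than a replacement. After normalizing $\lambda=0$, it fixes a graded decomposition $F'=\bigoplus_{n\in\N}t^nF'_0$, lets $V_1$ be the preimage of the scalar part $K$, and observes that $t\,V_1\subset F'$, so one may choose a linear complement $H'$ of $F'$ in $V_1$ with $t\,H'\subset F'_0$. Splitting $H'=H\oplus F_1$ where $H=\ker(x\mapsto tx)$ and $F_1$ maps injectively onto $F'_1:=tF_1\subset F'_0$, and then choosing a complement $F_0$ of $F'_1$ in $F'_0$, one sets $F:=\F[t]F_0\oplus\F[t]F_1=F'\oplus F_1$. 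This $F$ is free, contains $F'$, and satisfies $V_1=F\oplus H$ with $t\,H=0$ in one stroke; no limiting process is required. If you want to salvage your approach, the key missing idea is this: do not try to shrink $F$ into $C$, but enlarge $F'$ by adjoining precisely the part of the scalar complement that $t$ pushes into the degree-$0$ layer of $F'$.
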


\begin{proof}
We start by choosing a quasi-maximal free submodule $F'$ of $V$. Note that $F' \neq \{0\}$ since $V$ is not a torsion module.
The quotient module $V/F'$ is a torsion module whose dimension is at most countable.
We can choose a basis $(e_i)_{i \in I}$ of the free module $F'$ indexed by a subset $I$ of $\N$.
Then, by setting $V_i:=\underset{j \in I, \; j \leq i}{\sum}  \F[t]e_j$, we see that
$(V_i)_{i \in I}$ is a good stratification of $F'$. Hence, if $V/F'$ had a good stratification,
Lemma \ref{towerofstrat} would yield that $V$ has a good stratification, which has been ruled out.
If $F'=V$ then $V$ has a good stratification.

Hence, $V/F'$ is nonzero and it has no good stratification. It follows from Proposition \ref{torsiongoodstrat} that
$V/F'$ is finite-dimensional as a vector space over $\F$
or the endomorphism $x \mapsto t\,x$ of $V/F'$ has a dominant eigenvalue.
In any case, Lemma \ref{splitdominant} yields a scalar $\lambda$ and a module splitting
$$V/F'=K \oplus G$$
in which:
\begin{itemize}
\item Each vector of $K$ is annihilated by $t-\lambda$;
\item The module $G$ splits into a finite direct sum of monogenous submodules, each of which with dimension over $\F$ finite and greater than $1$.
\end{itemize}
In particular, $G$ has a finite stratification that satisfies condition $\PAplus$.

Since $p(t) \mapsto p(t+\lambda)$ is an automorphism of the algebra $\F[t]$, we lose
no generality in assuming that $\lambda=0$, and we shall assume that this condition holds throughout the
remainder of the proof.

Next, we define $V_1$ as the inverse image of $K$ under the canonical projection of $V$ onto $V/F'$.
It follows that $V/V_1$ is isomorphic to $G$, and hence it has a stratification that satisfies $\PAplus$.

Since $F'$ is a free $\F[t]$-module, we can choose an $\F$-linear subspace $F'_0$ of it such that
$F'=\underset{n \in \N}{\bigoplus} t^n F'_0$  and $x \in F'_0 \mapsto t^n x$ is injective for all $n \in \N$.

Next, we consider the inverse image $L$ of $F'_0$ under $x \in V_1 \mapsto t\,x$.
We have $L+F'=V_1$: indeed, given $x \in V_1$, we have $t\,x \in F'$ and hence
$t\,x=x_0+t\,x_1$ for some $x_0 \in F'_0$ and some $x_1 \in F'$, whence $x-x_1 \in L$.

It follows that we can find a linear subspace $H' \subset L$ such that
$$V_1=F' \oplus H',$$
leading to $\forall x \in H', \; t\,x \in F'_0$.
Next, we split $H'$ as follows: we consider the linear mapping $h : x \in H' \mapsto t\,x \in F'_0$,
we denote by $H$ its kernel, and we consider a complementary subspace $F_1$ of $H$ in $H'$.
It follows that $\forall x \in H, \; t\,x=0$, whereas $x \mapsto t\,x$ maps
$F_1$ bijectively onto a linear subspace $F'_1$ of $F'_0$.
Finally, we consider a complementary subspace $F_0$ of $F'_1$ in $F'_0$, and we set
$$F:=\Bigl(\underset{n \in \N}{\bigoplus}\, t^n F_0\Bigr) \oplus \Bigl(\underset{n \in \N}{\bigoplus}\, t^n F_1\Bigr)
=\F[t] F_0 \oplus \F[t] F_{1}=\F[t]F_0 \oplus \F[t]F'_1 \oplus F_1=F' \oplus F_1.$$
Then, $F$ is a free submodule of $V_1$ and
$$V_1=F' \oplus H'=F' \oplus F_1 \oplus H=F \oplus H.$$
Moreover, $F' \subset F \subset V_1$.

Now, we choose a \emph{linear subspace} $G'$ of $V$ which is mapped bijectively onto $G$ under the canonical projection $V \rightarrow V/F'$. Then, since $t\,x=0$ for all $x \in K$, we know that $t\,x \in F'+G' \subset F+G'$ for all $x \in G'$. Moreover,
the definition of $G'$ yields $G' \cap V_1=\{0\}$, whence $G' \cap F=\{0\}$.

Hence, $W:=F \oplus G'$ is a submodule of $V$ and $W/F$ is isomorphic to $G$,
which equals zero or has a stratification that satisfies condition $\PAplus$.

Since $V/F'=K \oplus G$, we have
$$V=V_1\oplus G'=F \oplus H \oplus G'=W \oplus H,$$
which completes the proof.
\end{proof}

We conclude that, in order to establish Theorem \ref{alphaelementarilyTheo} in the case of
a countable-dimensional space and a non-torsion $\F[t]$-module, it only remains to prove the following result:

\begin{prop}\label{lastnontorsionprop}
Let $u$ be an endomorphism of a vector space $V$ with countable dimension. Let $\lambda \in \F$ and $a \in \F$.
Assume that we have a splitting $V^u=W\oplus H$ and a non-zero free submodule $F$ of $W$ such that:
\begin{enumerate}[(a)]
\item The $\F[t]$-module $W/F$ is finite-dimensional as an $\F$-vector space, and if nonzero it has a stratification that satisfies condition $\PAplus$.
\item $\forall x \in H, \; u(x)=\lambda\,x$.
\end{enumerate}
Then, $u$ is $a$-elementarily decomposable.
\end{prop}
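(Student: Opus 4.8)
The plan is to construct a good stratification, not of $V^u$ itself, but of a submodule $\widetilde{W}$ obtained by enlarging $W$ with an infinite-dimensional ``reservoir'' carved out of $H$, and then to fit the remaining part of $H$ on top via Lemma \ref{towerofstrat} — or, more directly, to reduce everything to Corollary \ref{goodstrattoelementary}. First I would isolate the essential feature of $F$: since $F$ is a non-zero free $\F[t]$-module of countable rank, it has a good stratification whose dimension sequence is constant equal to $+\infty$, obtained by listing a basis $(e_i)_{i\in I}$ with $I\subset\N$ and setting $F_i:=\bigoplus_{j\le i}\F[t]e_j$. The point of hypothesis (a) is then exactly that, by Lemma \ref{towerofstrat} applied to the pair $F\subset W$, the module $W$ itself has a good stratification (if $W=F$ this is immediate; otherwise $W/F$ is nonzero with a stratification satisfying $\PAplus$, so Lemma \ref{towerofstrat} applies). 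So from the start we may assume $W$ has a good stratification $(W_\alpha)_{\alpha\in D}$.

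Next I would deal with $H$, on which $u$ acts as $\lambda\,\id$. If $H=\{0\}$ we are done by Corollary \ref{goodstrattoelementary}. Otherwise, write $V=W\oplus H$ and note $\dim_\F H\le\aleph_0$. The natural move is to absorb $H$ into the good stratification of $W$ by ``inflating'' it: since $F$ is free of countable rank, I can peel off from $F$ a countable family of cyclic free summands and pair each basis vector of $H$ with the generator of one such summand. Concretely, reindex a basis of $H$ as $(h_n)_{n\in N}$ with $N\subset\N$, choose distinct free generators $g_n$ of $F$ (if $F$ has finite rank one instead splits each $\F[t]g$ into $\F[t]g=\F[t]g\supset t\F[t]g\supset\cdots$ to manufacture infinitely many cyclic pieces — but in the countable-dimensional non-torsion case $F$ has countable rank, so this is unnecessary), and replace the generator $g_n$ by $g_n+h_n$. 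Each $\F[t](g_n+h_n)$ is still free of rank one (because $t^k(g_n+h_n)=t^k g_n+\lambda^k h_n$ and the $t^k g_n$ are $\F$-independent), and $\F[t](g_n+h_n)+\F h_n=\F[t]g_n\oplus\F h_n$. Carrying this out over all $n$ produces a free submodule $F^\sharp$ of $V$, equal in ``shape'' to $F$, with $W^\sharp:=F^\sharp\oplus(\text{a complement of }F\text{ in }W)\oplus(\text{leftover of }H)$ actually equal to all of $V$: the key bookkeeping is that $W\oplus H=F\oplus(\text{complement})\oplus H$ and $F\oplus H=F^\sharp$ plus the same complement. Thus $V^u$ itself acquires a good stratification (reusing the stratification of $W$ with $F$ replaced by $F^\sharp$, which is legitimate because $F^\sharp$ is again free of the same countable rank and the quotient $V/F^\sharp\cong W/F$ is unchanged), and Corollary \ref{goodstrattoelementary} finishes the proof.

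The step I expect to be the main obstacle is the bookkeeping in the previous paragraph: one must check carefully that after replacing $F$ by $F^\sharp=\F[t]\langle g_n+h_n\rangle\oplus(\text{rest of }F)$ the submodule $W^\sharp:=F^\sharp+ (\text{the $G'$-part of }W)$ really satisfies $W^\sharp\oplus H''=V$ for the residual summand $H''$ of $H$, that $W^\sharp/F^\sharp\cong W/F$ so that hypothesis (a) is preserved verbatim, and — most delicately — that $F^\sharp$ is genuinely a free $\F[t]$-module (not merely a sum of cyclic free modules, which could overlap). The freeness of each $\F[t](g_n+h_n)$ follows from the injectivity of $p(t)\mapsto p(t)(g_n+h_n)$, which in turn follows because the ``$g_n$-component'' of $p(t)(g_n+h_n)$ is $p(t)g_n\ne 0$ whenever $p\ne0$; and the directness of the sum $\bigoplus_n\F[t](g_n+h_n)\oplus(\text{rest of }F)$ follows from the directness of $\bigoplus_n\F[t]g_n\oplus(\text{rest of }F)=F$ together with $H\cap F=\{0\}$, by a diagonal-change-of-basis argument. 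Once $F^\sharp$ is known to be free of countable rank and quasi-positioned so that $V/F^\sharp\cong W/F$, Lemma \ref{towerofstrat} (applied to $F^\sharp\subset V$, using a good stratification of $F^\sharp$ and the $\PAplus$-stratification of $V/F^\sharp$) delivers a good stratification of $V^u$, and the result follows from Corollary \ref{goodstrattoelementary}.
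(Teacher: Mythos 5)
Your plan ultimately hinges on showing that $V^u$ has a good stratification, but this is exactly what can fail under the hypotheses of the Proposition. Example \ref{exampleofnogoodstrat} shows that $\F[t]\times\bigl(\F[t]/(t)\bigr)$ has no good stratification, and yet it satisfies every hypothesis of Proposition \ref{lastnontorsionprop} with $F=W=\F[t]\times\{0\}$, $H=\{0\}\times\bigl(\F[t]/(t)\bigr)$ and $\lambda=0$. Hence no argument that reduces the Proposition to Corollary \ref{goodstrattoelementary} applied to $V^u$ can possibly be correct: the whole purpose of this case is precisely to handle modules $V^u$ that are not amenable to the good-stratification machinery.

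The specific step you flagged as the main obstacle is indeed where it breaks. From $\F[t](g_n+h_n)\oplus\F h_n=\F[t]g_n\oplus\F h_n$ you get $F^\sharp\oplus H=F\oplus H$ as $\F$-vector spaces, so $V=F^\sharp\oplus G'\oplus H$ (with $G'$ a lift of a complement of $F$ in $W$). Consequently $V/F^\sharp$ is not isomorphic to $W/F$: it contains a copy of $H$. In the example above, $V/F^\sharp\cong\F[t]/(t)$, a one-dimensional torsion module, rather than $0=W/F$. A nonzero finite-dimensional module has no good stratification (property $\PM$ forces infinitely many nonzero increments), and if $H$ is infinite-dimensional with $t-\lambda$ acting as zero on it, every monogenous increment coming from $H$ is one-dimensional, breaking $\PAplus$. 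So the generator-shift does not remove the obstruction; it merely relocates $H$ from a direct summand of $V$ to a torsion piece of $V/F^\sharp$. The paper's actual proof abandons the idea of stratifying $V^u$ and instead invokes the Sewing Lemma (Lemma \ref{sewinglemma}), which directly constructs $v$ with $v^2=av$ so that $u-v$ weaves the cyclic free generator together with the eigenspace $H$ into a single cyclic free module; the finite-dimensional piece $W/F$ is then inserted by an explicit auxiliary map $f$. The point is that $V^{u-v}$ can be made free even when $V^u$ admits no good stratification, and this passage to $u-v$ is something your approach never performs.
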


To prove this result, the key is to consider the most simple situation, in which
$W=F$ and $F$ is monogenous:

\begin{lemma}[Sewing lemma]\label{sewinglemma}
Let $V$ be a vector space with countable dimension over $\F$, and
$u$ be an endomorphism of $V$.
Assume that we have a module splitting $V^u=V_1 \oplus V_2$
in which:
\begin{itemize}
\item $V_1$ is free, non-zero and monogenous;
\item $u$ induces a scalar multiple of the identity on $V_2$.
\end{itemize}
Let $x$ be a generator of $V_1$ and $a$ be a scalar. Then, there exists an endomorphism $v$ of $V$ such that
$v^2=av$ and, for $u':=u-v$, one has $V=\Vect\bigl((u')^k(x)\bigr)_{k \in \N}$.
\end{lemma}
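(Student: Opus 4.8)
The plan is to build a rank-controlled endomorphism $v$ with $v^2=av$ such that $u':=u-v$ admits $x$ as a cyclic vector, i.e.\ $\Vect\bigl((u')^k(x)\bigr)_{k\in\N}=V$. Since $V$ is infinite-dimensional, a cyclic $\F[t]$-module is automatically free, so once $x$ generates $V^{u'}$ the family $\bigl((u')^k(x)\bigr)_k$ is a basis of $V$, which gives the conclusion. I would take $v$ to be a sum of rank-one ``patches'', one for each basis vector of $V_2$.

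Concretely, write $e_k:=u^k(x)$ (a basis of $V_1$), let $\mu$ be the scalar with $u_{|V_2}=\mu\,\id_{V_2}$, and — using that $V$, hence $V_2$, is countable-dimensional — fix a basis $(f_i)_{i\in I}$ of $V_2$ with $I$ a finite or countable initial segment of $\N^{*}$, so that $(e_k)_k\cup(f_i)_i$ is a basis of $V$. If $V_2=\{0\}$ take $v:=0$. Otherwise, for each $i\in I$ set $m_i:=2i$, $w_i:=e_{m_i+1}-e_{m_i}-f_i$, and let $\ell_i\in V^{*}$ be the form vanishing on all basis vectors of $V$ except that $\ell_i(e_{m_i})=1$, $\ell_i(e_{m_i+1})=a$, $\ell_i(f_i)=-1$; then put $v:=\sum_{i\in I}w_i\otimes\ell_i$, which is a well-defined endomorphism because on each basis vector at most one $\ell_i$ is non-zero. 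I would first verify the identity $v^2=av$: the staggering $m_i=2i$ forces the orthogonality relations $\ell_i(w_j)=a\,\delta_{i,j}$ — for $i=j$ one gets $a-1-(-1)=a$, and for $i\ne j$ the supports of $\ell_i$ and $w_j$ are disjoint — whence $v^2(y)=\sum_i\ell_i(y)\,v(w_i)=\sum_i\ell_i(y)\sum_j\ell_j(w_i)\,w_j=a\sum_i\ell_i(y)\,w_i=a\,v(y)$ for every $y\in V$.

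Next I would show that $x=e_0$ generates $V^{u'}$, by checking that the smallest $u'$-stable subspace $C$ containing $e_0$ contains every $e_n$ and every $f_i$. Since $m_i\ge 2$ we have $v(e_0)=v(e_1)=0$, so $e_1=u'(e_0)\in C$; then one argues by induction on $n$ that $e_0,\dots,e_n\in C$ forces $e_{n+1}\in C$. If $n$ avoids all the sets $\{m_i,m_i+1\}$ this is immediate since $v(e_n)=0$. If $n=m_i$ then $u'(e_n)=e_{n+1}-w_i=e_n+f_i$, so first $f_i\in C$, and then, using $u'(f_i)=u(f_i)-v(f_i)=\mu f_i+w_i$, one computes $u'(e_n+f_i)=e_{n+1}+\mu f_i\in C$, hence $e_{n+1}\in C$. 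If $n=m_i+1$ then $v(e_n)=a\,w_i$ with $w_i=e_n-e_{n-1}-f_i$ already in $C$ (the vectors $e_{n-1},e_n,f_i$ having been secured at the earlier stage $n-1=m_i$), so $e_{n+1}=u'(e_n)+a\,w_i\in C$. Thus $C=V$, the module $V^{u'}$ is cyclic and infinite-dimensional, hence free on $\bigl((u')^k(x)\bigr)_k$, and we are done.

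I do not expect a genuine obstacle once the form of $v$ is fixed; the only non-obvious step is the discovery of the patches $w_i\otimes\ell_i$ — one must choose them so that their sum squares to $a$ times itself (this is exactly the orthogonality $\ell_i(w_j)=a\delta_{ij}$, which dictates the staggered indices $m_i$ and rules out the naive idea of visiting the $f_i$ ``directly'' in the chain, as that would make $v$ a nilpotent Jordan block rather than a quadratic endomorphism) while the shift $u'=u-v$ still threads one chain through all of $(e_k)_k$ and $(f_i)_{i\in I}$; the two verifications above are then short inductions.
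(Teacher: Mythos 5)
Your proof is correct. You take the same kind of approach as the paper — perturbing $u$ by a direct sum of finite-rank ``patches'' so that the chain $e_0,e_1,\dots$ detours through the $f_i$ and sweeps out all of $V$ — but your execution is cleaner and more uniform than the paper's. The paper staggers its patches with stride $3$ (indices $3n,3n+1,3n+2$), gives separate formulas for the cases $a=0$ and $a\neq 0$, and treats $\dim V_2$ finite and $\dim V_2$ countable in two separate blocks. Your construction $v=\sum_i w_i\otimes\ell_i$ with $w_i=e_{m_i+1}-e_{m_i}-f_i$, $\ell_i(e_{m_i})=1$, $\ell_i(e_{m_i+1})=a$, $\ell_i(f_i)=-1$ and stride $2$ (indices $m_i=2i$, $i\geq 1$) handles all four cases at once: the relation $v^2=av$ reduces to the single orthogonality identity $\ell_i(w_j)=a\,\delta_{ij}$ (valid for any $a$, by the parity of the indices), and the case $\dim V_2<\infty$ requires no modification since beyond the last $m_i+1$ you simply have $v(e_n)=0$. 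A further small advantage is that you leave $e_0,e_1$ untouched ($m_i\geq 2$), so the induction starts without any special pleading. All the verifications — well-definedness of $v$ (at most one $\ell_i$ hits each basis vector), the identity $v^2=av$, and the cyclicity induction with its three cases $n\notin\bigcup_i\{m_i,m_i+1\}$, $n=m_i$, $n=m_i+1$ — check out, so this is a valid and somewhat more elegant proof of the sewing lemma.
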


In short, we have a very specific perturbation of $u$ so as to turn $V$ into the free monogenous $\F[t]$-module generated by $x$.

\begin{proof}
For $n \in \N$, set $e_n:=u^n(x)$.

Assume first that $V_2$ has countable dimension, and
choose a basis $(f_n)_{n \in \N}$ of $V_2$.

If $a=0$, we define $v$ on the basis $(e_n)\coprod (f_n)$ as follows:
for all $n \in \N$, we set
$$\begin{cases}
v(e_{3n})=e_{3n+1}-f_n \\
v(e_{3n+1})=-e_{3n+1}+f_n \\
v(e_{3n+2})=0 \\
v(f_n)=-e_{3n+1}+f_n.
\end{cases}$$
Otherwise, we define it on the same basis as follows:
for all $n \in \N$, we set
$$\begin{cases}
v(e_{3n})=e_{3n+1}-f_n \\
v(e_{3n+1})=0 \\
v(e_{3n+2})=0 \\
v(f_n)=-a e_{3n+1}+a f_n.
\end{cases}$$
In any case, one easily checks that $v^2=a\,v$.
Moreover, in any case, one also checks by induction on $n$ that
$\Vect((u-v)^k(x))_{0 \leq k \leq 4n}=\Vect(e_0,e_1,\dots,e_{3n},f_0,\dots,f_{n-1})$ for all $n \in \N$.
This proves the claimed statement.

Assume finally that $V_2$ has finite dimension $p$. If $p=0$, we simply take $v=0$.
Now, assuming otherwise we choose a basis $(f_0,\dots,f_{p-1})$ of $V_2$.
Then, we slightly modify the above definition of $v$:
\begin{itemize}
\item If $a=0$, we define, for all every non-negative integer $n \leq p-1$,
$$\begin{cases}
v(e_{3n})=e_{3n+1}-f_n \\
v(e_{3n+1})=-e_{3n+1}+f_n \\
v(e_{3n+2})=0 \\
v(f_n)=-e_{3n+1}+f_n
\end{cases}$$
and for every integer $k \geq 3p$ we set $v(e_k)=0$;
\item If $a \neq 0$, we define, for all every non-negative integer $n \leq p-1$,
$$\begin{cases}
v(e_{3n})=e_{3n+1}-f_n \\
v(e_{3n+1})=0 \\
v(e_{3n+2})=0 \\
v(f_n)=-a e_{3n+1}+a f_n
\end{cases}$$
and for every integer $k \geq 3p$ we set $v(e_k)=0$.
\end{itemize}
In any case, it is once more easy to check that $v^2=av$.
Moreover, one proves
by finite induction that $\Vect\bigl((u-v)^k(x)\bigr)_{0 \leq k \leq 4n}=\Vect(e_0,e_1,\dots,e_{3n},f_0,\dots,f_{n-1})$ for all $n \in \lcro 0,p\rcro$,
and then $\Vect\bigl((u-v)^k(x)\bigr)_{0 \leq k \leq q}=\Vect(e_0,e_1,\dots,e_{q-p},f_0,\dots,f_{p-1})$
for all $q \geq 4p$.
Again, the claimed statement is proved in that case.
\end{proof}

\begin{proof}[Proof of Proposition \ref{lastnontorsionprop}]
We split $F=\F[t]x \oplus F'$ for some non-zero vector $x$ and some free submodule $F'$.

We assume first that $F \subsetneq W$.
Let us consider a stratification $(W_1,\dots,W_N)$ of $W/F$ with property $\PAplus$,
with associated dimension sequence $(n_1,\dots,n_N)$
and an associated vector sequence $(\overline{x_1},\dots,\overline{x_N})$
in which $x_i$ denotes a vector of $W$ and $\overline{x_i}$ denotes its class modulo $F$.
We denote by $M$ the dimension of the $\F$-vector space $W/F$ and we set
$$\bfB:=\bigl(x_1,\dots,u^{n_1-1}(x_1),x_2,\dots,u^{n_2-1}(x_2),\dots, u^{n_N-2}(x_N)\bigr)
\quad \text{and} \quad G_1:=\Vect(\bfB).$$
For all $i \in \lcro 1,N\rcro$, there is a polynomial $p_i(t) \in \F[t]$ such that
$u^{n_i}(x_i)$ equals $p_i(t)\,x$ modulo $F'+\Vect\bigl(x_1,\dots,u^{n_1-1}(x_1),x_2,\dots,u^{n_2-1}(x_2),\dots, u^{n_i-1}(x_i)\bigr)$.
We set
$$m:=\max\bigl(0,\deg(p_1(t)),\dots,\deg(p_N(t))\bigr) \quad \text{and} \quad d:=M+m.$$
Then, we consider the linear map
$$f : G_1 \oplus \F_{d-1}[t]x \oplus F' \rightarrow V$$
that sends
$u^{n_k-1}(x_k)$ to $au^{n_k-1}(x_k)-x_{k+1}$ for all $k \in \lcro 1,N-1\rcro$,
that sends all the other vectors of $\bfB$ to $0$, and that sends all the vectors of
$\F_{d-1}[t]x \oplus F'$ to $0$.

Then, we define inductively $(y_1,\dots,y_M)$ by $y_1:=x_1$ and, for all $k \in \lcro 1,M-1\rcro$,
$y_{k+1}:=(u-f)(y_k)$: this makes sense because one proves by induction that, for each $k \in \lcro 1,M-1\rcro$,
the vector  $y_k$ equals the $k$-th vector of $\bfB$ modulo the sum of
$\F_{m+k-2}[t]x\oplus F'$ with the span of the first $k-1$ vectors of $\bfB$.
It follows that $y_M$ equal $u^{n_N-1}(x_N)$ modulo $\F_{d-2}[t]x\oplus F'\oplus G_1$.
Setting
$$G_2:=G_1 \oplus \F y_M \quad \text{and} \quad G_3:=\Vect(y_1,\dots,y_M),$$
we note that
$$W=F \oplus G_2=F \oplus G_3$$
and that $u(y_M)=z+z'$ for some $(z,z')\in (\F_{d-1}[t]x \oplus F') \times G_3$. \\
We finally extend $f$ into a linear map on $G_2 \oplus \F_{d-1}[t]x \oplus F'$ by setting
$$f(y_M):=a y_M+z-x,$$
so that
$(u-f)(y_M)=x$ modulo $G_3$.
Now, set
$$V':=\F[t]t^{d} x\oplus H.$$
Applying the sewing lemma to the endomorphism of $V'$ induced by $u$ and to the vector $t^{d} x$,
we recover an endomorphism $g$ of $V'$ such that $g^2=a g$ and the sequence $\bigl((u-g)^k(t^{d} x)\bigr)_{k \in \N}$ spans
$V'$.

Finally, noting that $V=\F_{d-1}[t] x \oplus V' \oplus G_2 \oplus F'$, we consider the unique endomorphism $v$ of $V$
whose restriction to $V'$ is $g$ and whose restriction to $\F_{d-1}[t]x \oplus F' \oplus G_2$ is $f$.

We claim that $v^2=av$ and that $u-v$ is elementary.
Let us check first the equality on each subspace $V'$, $\F_{d-1}[t]x  \oplus F'$ and $G_2$.
First of all, both $v^2$ and $av$ vanish everywhere on $\F_{d-1}[t]x\oplus F'$.
Next, by the very definition of $g$, we know that $v^2$ and $av$ coincide on $V'$.
Finally, it is easily checked that $v^2(y)=av(y)$ for every vector $y$ in $\bfB$
by using the fact that $n_2,\dots,n_N$ are all greater than $1$; on the other hand, since $z-x$ belongs to $\F_{d-1}[t]x \oplus F'$, we
have $f(z-x)=0$ and it follows that $v\bigl((v-a\id)(y_M)\bigr)=0$, i.e.\ $v^2(y_M)=a v(y_M)$.

Obviously, the module $(F')^{u-v}=(F')^u$ is free.
In order to conclude, we shall simply check that
$\F[t]x\oplus G_3 \oplus H$ is stabilized by $u-v$ and that the module
$(\F[t]x\oplus G_3 \oplus H)^{u-v}$ is free with generator $y_1$.
First of all, we have $(u-v)^i(y_1)=y_{i+1}$ for all $i \in \lcro 1,M-1\rcro$,
and then $(u-v)^M(y_1)=x$ modulo $G_3$.
Then, as $f$ vanishes everywhere on $\F_{d-1}[t]x$, the definitions of $d$ and $v$ show, by induction, that
$(u-v)^k(y_1)=u^{k-M}(x)$ modulo $\Vect\bigl((u-v)^i(y_1)\bigr)_{0 \leq i<k}$ for all $k \in \lcro M,M+d-1\rcro$.
Moreover, the choice of $g$ shows that
$$V' =\Vect\bigl((u-v)^l (u^{d}(x))\bigr)_{l \in \N} \subset \Vect\bigl((u-v)^l(y_1)\bigr)_{l \in \N}
\subset \F[t]x\oplus H \oplus G_{3.}$$
It follows that $\bigl((u-v)^i(y_1)\bigr)_{i \in \N}$ generates the infinite-dimensional vector space
$\F[t]x\oplus H \oplus G_3$, which yields the claimed result.
Therefore, $V=F'\oplus (\F[t]x \oplus H \oplus G_3)$ is a free $\F[t]$-module for the structure induced by $u-v$,
or in other words $u-v$ is elementary.

Finally, assume that $W=F$. Then, we simply split $V=(\F[t]x\oplus H) \oplus F'$, and we apply the sewing lemma to
the endomorphism of $\F[t]x\oplus H$ induced by $u$, which yields an endomorphism $w$ of $\F[t]x\oplus H$
such that $w^2=aw$ and the module $(\F[t]x\oplus H)^{u-w}$ is free. We extend $w$ into an endomorphism $v$ of $V$
that maps every vector of $F'$ to $0$, and we obtain that $v^2=av$ and that $V^{u-v}$ is free.
\end{proof}

Now, the proof of Theorem \ref{alphaelementarilyTheo} is complete over all vector spaces of countable dimension.
Hence, Theorem \ref{theo3} is finally established in all situations.

\section{Special decompositions}\label{specialcasessection}

In this last section, we complete the proofs of Theorems \ref{3squarezero}, \ref{3idemcar2theo} and \ref{LC3}.

\subsection{Sums of three square-zero endomorphisms}\label{3squarezerosection}

Let $u$ be an endomorphism of an infinite-dimensional vector space $V$.
Let us first apply Theorems \ref{theo3}, \ref{dominanteigenvalueCN} and \ref{dominanteigenvalueCS}
to $p_1=p_2=p_3=t^2$. Here, $\tr p_1+\tr p_2+\tr p_3=0$, and
a scalar is a $(p_1,p_2,p_3)$-sum if and only if it equals $0$.
Hence:
\begin{itemize}
\item If $u$ has no dominant eigenvalue then it is the sum of three square-zero endomorphisms, by Theorem \ref{theo3}.
\item If $u$ has a dominant eigenvalue $\lambda$, it is the sum of three square-zero endomorphisms only if
$\lambda=0$ or $\F$ has characteristic $2$, according to Theorem \ref{dominanteigenvalueCN}.
\item If $u$ has a dominant eigenvalue $\lambda$ such that $u-\lambda \id_V$ has infinite rank, and either $\lambda=0$ or $\F$
has characteristic $2$, then Theorem \ref{dominanteigenvalueCS} yields that $u$ is the sum of three square-zero endomorphisms.
\end{itemize}
It only remains to tackle the case when
$u$ splits as $u=\lambda\id_V+w$ for some finite-rank endomorphism $w$ of $V$, and either $\F$ has characteristic $2$ or
$\lambda=0$.

Assume that $u$ is the sum of three square-zero endomorphisms.
Let $A$ be a square matrix in the class $[w]$ (see Section \ref{finiteranksection}), with size $n \times n$.
By Theorem \ref{theofiniterank}, there is a non-negative integer $q$ such that $(A+\lambda I_n) \oplus \lambda I_q$
is the sum of three square-zero matrices. Hence, its trace equals zero, leading to $\tr w+(n+q)\lambda=0$.
If $\F$ has characteristic $2$, this yields $\tr w \in \{0,\lambda\}$, otherwise we know that $\lambda=0$ and hence $\tr w=0$.

Conversely, by Corollaries 1.5 and 1.6 of \cite{dSP3squarezero} we have the following results:
\begin{itemize}
\item Every finite-rank endomorphism $w$ of $V$ with trace zero is the sum of three square-zero endomorphisms of $V$.

\item If $\F$ has characteristic $2$, then, for all $\lambda \in \F$ and every finite-rank endomorphism $w$ of $V$ with trace in $\{0,\lambda\}$,
the endomorphism $\lambda \id_V+w$ is the sum of three square-zero endomorphisms of $V$.
\end{itemize}

This completes the proof of Theorem \ref{3squarezero}.

\subsection{Sums of three idempotents over a field with characteristic $2$}

Here, we assume that the underlying field $\F$ has characteristic $2$.
We put $p_1=p_2=p_3=t^2-t$. Since $\F$ has characteristic $2$ the equation $2\lambda=\tr p_1+\tr p_2+\tr p_3$
has no solution in $\F$. Moreover, a scalar is a $(p_1,p_2,p_3)$-sum if and only if it belongs to $\{0_\F,1_\F\}$.
Hence:
\begin{itemize}
\item If $u$ has no dominant eigenvalue then it is the sum of three idempotent endomorphisms, by Theorem \ref{theo3}.
\item If $u$ has a dominant eigenvalue $\lambda$, then it is the sum of three idempotent endomorphisms only if
$\lambda \in \{0_\F,1_\F\}$, according to Theorem \ref{dominanteigenvalueCN}.
\item If $u$ has a dominant eigenvalue $\lambda\in \{0_\F,1_\F\}$ such that $u-\lambda \id_V$ has infinite rank, then $u$ is the sum of three idempotent endomorphisms, by Theorem \ref{dominanteigenvalueCS}.
\end{itemize}

It remains to consider the case when $u=\lambda \id_V+w$ for some finite-rank endomorphism $w$
of $V$ and some $\lambda \in \{0_\F,1_\F\}$. Yet, every idempotent square matrix with entries in $\F$ has its trace in $\{0_\F,1_\F\}$.
With the same line of reasoning as in Section \ref{3squarezerosection}, we obtain that if $u$ is the sum of three idempotent endomorphisms
of $V$ then $m\lambda+\tr(w) \in \{0_\F,1_\F\}$ for some non-negative integer $m$, whence $\tr(w) \in \{0_\F,1_\F\}$.

Conversely, by Corollary 1.7 from \cite{dSP3squarezero}, for every $\lambda \in \{0_\F,1_\F\}$
and every finite-rank endomorphism $w$ of $V$ such that $\tr w \in \{0_\F,1_\F\}$, the endomorphism
$\lambda \id_V+w$ is the sum of three idempotent endomorphisms.

This completes the proof of Theorem \ref{3idemcar2theo}.

\subsection{Every endomorphism is a linear combination of three idempotents}

Theorem \ref{LC3} is already known in the finite-dimensional case: See \cite{dSPLC3}.
Now, we complete the infinite-dimensional case.

Let $u$ be an endomorphism of an infinite-dimensional vector space $V$.
If $u$ has no dominant eigenvalue, then $u$ is the sum of three idempotent endomorphisms, by Theorem \ref{theo3}.
Assume now that $u$ has a dominant eigenvalue $\lambda$ such that $u-\lambda\,\id_V$ has infinite rank.
We can split $\lambda=a_1+a_2+a_3$ where each $a_i$ is a scalar in $\F$.
For all $i \in \lcro 1,3\rcro$, we set $p_i:=t^2-a_it$ if $a_i \neq 0$, otherwise we set $p_i:=t^2-t$.
Hence, $\lambda$ is a $(p_1,p_2,p_3)$-sum, and by Theorem \ref{dominanteigenvalueCS} we conclude that
$u$ is a $(p_1,p_2,p_3)$-sum, which yields that it is a linear combination of three idempotents.

Assume finally that $u$ has a dominant eigenvalue $\lambda$ for which $u-\lambda \,\id_V$ has finite rank.
Then, Corollary 6.2 from \cite{dSP3squarezero} yields that $u$ is a linear combination of three idempotents.

This completes the proof of Theorem \ref{LC3}.


\begin{thebibliography}{1}

\bibitem{PearcyTopping}
C. Pearcy, D. Topping,
{Sums of small numbers of idempotents,}
Michigan Math. J.
{\bf 14-4} (1967), 453--465.

\bibitem{Rabanovich}
V. Rabanovich,
{Every matrix is a linear combination of three idempotents,}
Linear Algebra Appl.
{\bf 390} (2004), 137--143.

\bibitem{dSP3squarezero}
C. de Seguins Pazzis,
{A note on sums of three square-zero matrices,}
Linear Multilinear Algebra
{\bf 65} (2017), 787--805.

\bibitem{dSPLC3}
C. de Seguins Pazzis,
{On decomposing any matrix as a linear combination of three idempotents,}
Linear Algebra Appl.
{\bf 433} (2010), 843--855.

\bibitem{dSPSum4}
C. de Seguins Pazzis,
{Sums of quadratic endomorphisms of an infinite-dimensional vector space,}
Proc. Edinburgh Math. Soc., in press, arXiv preprint: http://arxiv.org/abs/1601.00296

\bibitem{dSPsumoftwotriang}
C. de Seguins Pazzis,
{Sums of two triangularizable quadratic matrices over an arbitrary field,}
Linear Algebra Appl.
{\bf 436} (2012) 3293--3302.

\bibitem{Shitov}
Y. Shitov,
{Sums of square-zero endomorphisms of a free module,}
Linear Algebra Appl.
{\bf 507} (2016) 191--197.

\bibitem{WangWu}
J.-H. Wang, P.Y. Wu,
{Sums of square-zero operators,}
Studia Math.
{\bf 99} (1991), 115--127.

\end{thebibliography}
\end{document}